\theoremstyle{plain}
\newtheorem{theorem}{Theorem}
\newtheorem{lemma}{Lemma}
\newtheorem{proposition}{Proposition}
\newtheorem{corollary}{Corollary}
\theoremstyle{definition}
\newtheorem{definition}{Definition}
\theoremstyle{remark}
\newtheorem{remark}{Remark}
\newcommand{\esssup}{{\mathrm{ess}\sup\,}}
\newcommand{\essinf}{{\mathrm{ess}\inf\,}}
\begin{document}

\title{\bf\Large Maximization and minimization of the principal eigenvalue of the Laplacian with indefinite weight
\\under Dirichlet and Robin boundary conditions\\ on classes of rearrangements}

\author{Claudia Anedda\footnote{Department of Mathematics and Computer Science,
University of Cagliari, Via Ospedale 72, Cagliari, 09124,  Italy (\tt canedda@unica.it).}\; and\;
Fabrizio Cuccu\footnote{Department of Mathematics and Computer Science,
University of Cagliari, Via Ospedale 72, Cagliari, 09124,  Italy (\tt fcuccu@unica.it).}}

\maketitle
\begin{abstract}
\noindent Let $\Omega\subset\mathbb{R}^N$, $N\geq 1$, be a bounded connected open set. We consider 
the weighted  eigenvalue problem   $-\Delta u =\lambda m u$ in $\Omega$ with $\lambda \in \mathbb{R}$,
$m\in L^\infty(\Omega)$  and with homogeneous Dirichlet and Robin boundary conditions.
First, we study weak* continuity, convexity and G\^ateaux differentiability of   
the map $m\mapsto1/\lambda_1(m)$, where $\lambda_1(m)$ is the principal eigenvalue. 
Then, denoting by $\mathcal{G}(m_0)$ the class of rearrangements of a fixed weight $m_0$
and assuming that $m_0$ is positive on a set of positive Lebesgue measure, we investigate the
minimization and maximization of $\lambda_1(m)$ over $\mathcal{G}(m_0)$. The minimization 
problem has been already discussed in some papers; here we prove some known results about the existence 
and characterization of minimizers of $\lambda_1(m)$. We underline that our approach allows us to treat
Dirichlet and Robin boundary conditions together.
Instead, to our best knowledge, the maximization problem has been only partially addressed in the literature.  
It turns out that the maximization of $\lambda_1(m)$ is more intricate than its minimization.
In our work  we discuss  existence, uniqueness and characterization of maximizers both in $
\mathcal{G}(m_0)$ and in its weak*  closure $\overline{\mathcal{G}(m_0)}$. In particular, we provide an
original full description  of the unique maximizer in the case of Dirichlet boundary conditions.
In the context of the population dynamics, this kind of problems arise from the 
question of determining the optimal spatial location of favourable and unfavourable habitats 
in order to increase the chances of survival or extinction of a population.  
\end{abstract}

\noindent {\bf Keywords}: principal eigenvalue, Dirichlet boundary conditions, Robin boundary conditions, 
indefinite weight, maximization, minimization, population dynamics.  

\smallskip
\noindent {\bf Mathematics Subject Classification 2020}: 47A75,  35J25, 35Q92.

\section{Introduction and main results}\label{intro}

In this paper we consider the following indefinite weighted eigenvalue problem   
\begin{equation}\label{p0}
\begin{cases}-\Delta u =\lambda m u \quad &\text{in } \Omega\\
\cfrac{\partial u}{\partial \nu} +\sigma(x) u=0  &\text{on } \partial\Omega, \end{cases}  
\end{equation}  
where $\Omega\subset\mathbb{R}^N$ is a bounded domain with Lipschitz boundary $\partial\Omega$, 
 $m\in L^\infty(\Omega)$, $\lambda\in\mathbb{R}$, $\nu$ is the outward unit normal vector on $\partial \Omega$ and $\sigma(x) \in C(\partial \Omega)$ is a fixed function,
$\sigma(x)\geq 0$. Note that, when $\sigma(x)\equiv +\infty$ 
we obtain the Dirichlet boundary conditions, if 
 $\sigma(x)\equiv 0$ we have the Neumann boundary conditions and when $\sigma(x)$ is not identically zero and 
 finite we get the Robin boundary 
 conditions. Here, we focus on the Dirichlet and Robin boundary conditions only, the case of the Neumann boundary conditions
 has been investigated in \cite{AC23}.\\ 
An eigenvalue $\lambda$ of \eqref{p0} is called {\it principal eigenvalue} if it admits a positive eigenfunction.
Problem \eqref{p0} has been studied in various papers (see, for example, \cite{AB,Bo,BL}). In particular, it is known that there is a positive principal eigenvalue if and only if the Lebesgue measure of the set $\{x\in\Omega:m(x)>0\}$ is positive.
For the sake of completeness and in order to maintain this paper self-contained, we prefer to give here
(see Section \ref{preliminaries}) an independent proof of the result above. Moreover, we show that, under the previous hypothesis on $m$, there exists an increasing sequence of     
positive eigenvalues. The smallest positive eigenvalue is the principal eigenvalue, which will be denoted by $
\lambda_1(m)$.\\   
Problem \eqref{p0} and its variants play a crucial role in studying nonlinear models from 
population  dynamics (see \cite{S}) and population genetics (see \cite{F}).\\      
We illustrate in details the following model in population dynamics analyzed in \cite{CC}
\begin{equation}\label{p1}
\begin{cases}
v_t=d\Delta v+v[m(x)-cv] \quad &\text{in }\Omega\times(0,\infty),\\
\cfrac{\partial v}{\partial \nu} +\sigma(x) v=0 & \text{on } \partial\Omega\times(0,\infty),\\
v(x,0)=v_0(x)\geq 0, \  v(x,0)\not \equiv 0& \text{in } \overline{\Omega}.
\end{cases}  
\end{equation}

In \eqref{p1} $v(x,t)$ represents the density of a population inhabiting the region $\Omega$ at location $x$ and time 
$t$ (for that reason, only non-negative solutions of \eqref{p1} are of interest), $v_0(x)$ is the initial density, $d$ is 
the diffusion rate of the population and $c$ is a constant describing the limiting effects of crowding. The function $m(x)
$ represents the intrinsic local growth rate of the population, it is positive on favourable habitats and negative on 
unfavourable ones; it mathematically describes the available resources in the spatially heterogeneous environment
$\Omega$.  More precisely, by the term ``resource'' we mean anything that affects the growth rate of the population both in positive (for example food) and in negative (for example predators) way. The integral $\int_\Omega m\, dx$ can be interpreted as a measure of the total resources in $\Omega$.
The boundary conditions have the following biological meaning.
When $\sigma(x)\equiv+\infty$ (i.e. Dirichlet boundary conditions), $\partial\Omega$ is a deadly boundary:  the 
environment $\Omega$ is surrounded by a completely inhospitable region, i.e. any individual reaching the boundary 
dies.
If $\sigma(x)\equiv 0$ (i.e. Neumann boundary conditions),  the boundary acts as a barrier, i.e. any individual reaching the 
boundary returns to the interior.
Finally, when $\sigma(x)$ is not identically zero and 
finite (i.e. Robin boundary conditions), $\Omega$ is surrounded by a hostile but not deadly 
region (the larger $\sigma(x)$ is, the more inhospitable the region is):  some members of the population reaching the 
boundary of $\Omega$ die and others turn back.

It is known (see \cite{CC,CC91} and references therein) that \eqref{p1} predicts persistence for the population if $
\lambda_1(m)<1/d$  and extinction if $\lambda_1(m)\geq 1/d$. As a consequence, determining the best spatial 
arrangement of favourable and unfavourable habitats for the survival (resp. for the extinction), within a fixed 
class of environmental configurations, results in minimizing (resp. maximizing) $\lambda_1(m)$ over the 
corresponding class of weights. 
Having information of this type could affect, for example, the strategies to be adopted for the conservation of 
a population with limited resources or extermination of some pest population with an amount of pesticide enough to 
treat only some fraction of the infested region.\\
This kind of problem has been investigated by many other authors. The question of determining the optimal spatial arrangements of favourable and unfavourable habitats in $\Omega$ for the survival of the modelled population
was first addressed by Cantrell and Cosner in \cite{CC, CC91}, where the authors considered the 
diffusive logistic equation \eqref{p1} with homogeneous Dirichlet boundary conditions and when the weight $m$ has fixed maximum, minimum and integral over $\Omega$. The analogous problem with Neumann boundary conditions has been analysed
by Lou and Yanagida in \cite{LY}. Berestycki et al. (\hspace{-0.01cm}\cite{BHR}) investigated how the fragmentation of the environment affects the persistence of the population in the case of periodically fragmented environment ($\Omega=\mathbb{R}^N$ and $m(x)$ periodic), Roques and Hamel (\hspace{-0.01cm}\cite{RH}) studied the optimal arrangement of resources by using numerical computation, Jha and Porru (\hspace{-0.01cm}\cite{JP}), among the other things, exhibited an example of symmetry breaking of the optimal arrangement of the local growth rate in the case of Neumann boundary conditions. Hinterm\"uller et al. (\hspace{-0.01cm}\cite{HKL}) and Lamboley et al. 
(\hspace{-0.01cm}\cite{LLNP}) investigated model \eqref{p1} with Robin boundary conditions. Mazari et al. (\hspace{-0.01cm}\cite{MNP}) studied 
several shape optimization problems arising in population dynamics.
Finally, in \cite{AC} and in \cite{AC23} it is shown the Steiner symmetry of the optimal
rearrangement under Dirichlet boundary conditions and the monotonicity property in cylindrical domains under Neumann
boundary conditions, respectively.  \\ 
We observe that  problem \eqref{p0} with weight $m(x)$ positive and Dirichlet boundary conditions
also has a well known physical interpretation: it models  the normal modes of vibration of a membrane 
$\Omega$ with clamped boundary $\partial\Omega$ and mass
density $m(x)$; $\lambda_1(m)$ represents the principal natural frequency of the membrane. 
Therefore, physically, minimizing (resp. maximizing) $\lambda_1(m)$ means finding the mass distribution of the 
membrane which gives the lowest (resp. highest) principal natural frequency. 
Usually, the membrane is built by using only two homogeneous materials of 
different densities  and, then, the weights in the optimization problem take only two positive 
values. Among many papers that consider the optimization of the principal natural frequency, we recall
\cite{CGIKO,CML1,CML2}. \\
In order to present our work, we briefly give some notations and definitions. 
We denote by $\lambda_k(m)$, $k\in \mathbb{N}$, the $k$-th positive eigenvalue of problem
 \eqref{p0} corresponding to the weight $m$. We say that
 two Lebesgue measurable functions $f,g:\Omega \to \mathbb{R}$
are \emph{equimeasurable} if the superlevel sets
$\{x\in \Omega: f(x)>t\}$ and $\{x\in \Omega: g(x)>t\}$ have the same  measure for all $t\in \mathbb{R}$. For a fixed $f\in L^\infty(\Omega)$, we call the set
$\mathcal{G}(f)=\{g:\Omega\to\mathbb{R}: g \text{ is measurable and $g$ and
$f$ are equimeasurable}\}$ the
\emph{class of rearrangements of $f$} (see Section
 \ref{rearrangements}).\\
 The present paper contains the following main results. First, we study the dependence of $
 \lambda_k(m)$ on  $m$, in 
 particular we investigate
 continuity and, for $k=1$, convexity and differentiability properties (see Lemmas \ref{teo1},
 \ref{teo2} and \ref{teo3}).
Then, we examine the optimization of $\lambda_1(m)$ in the class of rearrangements $\mathcal{G}
(m_0)$  of a fixed function $m_0\in L^\infty(\Omega)$.
Precisely, we prove the existence of minimizers and a characterization of them in terms of the 
 eigenfunctions relative to $\lambda_1(m)$ (see Theorem \ref{exist} below).
 Regarding the maximization, we give conditions for the
 existence and uniqueness of  maximizers, furthermore we discuss their existence and provide a 
 characterization in the closure 
 $\overline{\mathcal{G}(m_0)}$ of $\mathcal{G}(m_0)$ with respect to the weak*  topology of 
 $L^\infty(\Omega)$ (see Theorem \ref{existmax} below).   
 We note that, considering classes of rearrangements allows us to deal with the class of weights
 usually considered in literature, i.e. a set of  bounded functions with 
 fixed maximum, minimum and integral over $\Omega$ as a particular case (see, for example, \cite{CC,LY, LLNP}). 
 Indeed, this class can be written in terms of   for a weight $m_0$ which takes exactly two values (functions of this kind are called of ``bang-bang'' type), for details see \cite{AC}.

\begin{theorem}\label{exist} 
 Let $\lambda_1(m)$ be the principal eigenvalue of problem
\eqref{p0},   
 $m_0\in L^\infty(\Omega)$      
such that the set $\{x\in \Omega:m_0(x)>0\}$ has positive Lebesgue measure and $\mathcal{G}(m_0)$ the class of rearrangements
 of $m_0$ (see Definition \ref{class}). Then \\
 i) there exists a solution of the problem   
 \begin{equation}\label{infclos0}
 \min_{m\in{\mathcal{G}(m_0)}} \lambda_1(m);
 \end{equation}  \\
 ii) for every solution $\check{m}_1\in\mathcal{G}(m_0)$ of \eqref{infclos0}, 
  there exists an increasing function $\psi$ such
 that      
 \begin{equation}\label{carat0}\check{m}_1= \psi(u_{\check{m}_1}) \quad \text{a.e. in }\Omega, 
 \end{equation} where 
$u_{\check{m}_1}$ is 
 the unique positive eigenfunction relative to $\lambda_1(\check{m}_1)$ normalized 
as in \eqref{normaliz1}.
\end{theorem}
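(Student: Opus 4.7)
The plan is to recast both parts as the maximization of the convex, weak* continuous functional $m\mapsto 1/\lambda_1(m)$ (a minimum of $\lambda_1$ corresponds to a maximum of $1/\lambda_1$, since $\lambda_1>0$ under the hypothesis on $m_0$). The class $\mathcal{G}(m_0)$ is norm-bounded in $L^\infty(\Omega)$, so its weak* closure $\overline{\mathcal{G}(m_0)}$ is weak* compact by Banach--Alaoglu and, by the Hardy--Littlewood--P\'olya description, is convex. By the weak* continuity of $1/\lambda_1$ stated in Lemma \ref{teo1}, the maximum over $\overline{\mathcal{G}(m_0)}$ is attained at some $\check{m}_1\in\overline{\mathcal{G}(m_0)}$.

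For part (i) I would then invoke Lemma \ref{teo2} together with Bauer's maximum principle: a convex upper semicontinuous functional on a convex compact set attains its maximum at an extreme point of that set. A classical theorem of Ryff identifies the extreme points of $\overline{\mathcal{G}(m_0)}$ with $\mathcal{G}(m_0)$ itself, so $\check{m}_1$ may be chosen in $\mathcal{G}(m_0)$, yielding the desired minimizer of $\lambda_1$ over $\mathcal{G}(m_0)$.

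For part (ii), I would translate optimality into a first-order variational inequality using the G\^ateaux derivative provided by Lemma \ref{teo3}. Given any competitor $m\in\mathcal{G}(m_0)$, the segment $(1-t)\check{m}_1+tm$, $t\in[0,1]$, lies in $\overline{\mathcal{G}(m_0)}$; since $1/\lambda_1$ is maximized at $\check{m}_1$, evaluating the one-sided derivative at $t=0^+$ gives, up to a positive factor coming from the normalization \eqref{normaliz1},
\[
\int_\Omega u_{\check{m}_1}^2\,(m-\check{m}_1)\,dx \le 0 \quad\text{for all } m\in\mathcal{G}(m_0).
\]
Thus $\check{m}_1$ maximizes the linear functional $m\mapsto\int_\Omega u_{\check{m}_1}^2\,m\,dx$ over $\mathcal{G}(m_0)$, and Burton's characterization of maximizers of linear functionals on rearrangement classes yields $\check{m}_1=\phi(u_{\check{m}_1}^2)$ a.e.\ in $\Omega$ for some increasing function $\phi$. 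Since $u_{\check{m}_1}>0$, setting $\psi(s)=\phi(s^2)$ preserves monotonicity and produces \eqref{carat0}.

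The delicate step is the sharp form of Burton's theorem, which yields an \emph{honest} increasing $\psi$ rather than merely a measure-preserving rearrangement on level sets of $u_{\check{m}_1}^2$; this demands that $u_{\check{m}_1}^2$ has no level sets of positive Lebesgue measure, a non-degeneracy property usually obtained through unique continuation or analyticity of positive eigenfunctions of $-\Delta$. A second, milder point is that $u_{\check{m}_1}$ depends on $\check{m}_1$ via \eqref{p0}; Lemma \ref{teo3} is precisely what allows the implicit dependence to be absorbed into a positive multiplicative factor, so that the first-order condition collapses to the clean linear inequality above.
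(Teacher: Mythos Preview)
Your approach to part (i) via Bauer's maximum principle is correct and slightly more economical than the paper's. The paper argues directly: if a maximizer $\check m_1$ of $\widetilde\mu_1$ over $\overline{\mathcal G(m_0)}$ were not extreme, write $\check m_1=\tfrac12(m+q)$ with $m\neq q$; convexity forces $m,q$ to be maximizers too, then ii) of Lemma~\ref{teo2} forces $m,q$ linearly dependent, which together with homogeneity yields $m=q$. Bauer gives the same conclusion in one stroke, at the cost of citing an external principle; the paper's hands-on argument has the bonus of showing that \emph{every} maximizer lies in $\mathcal G(m_0)$, which it reuses in part (ii).

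Part (ii), however, has a genuine gap. Your first-order condition only shows that $\check m_1$ is \emph{a} maximizer of $m\mapsto\int_\Omega u_{\check m_1}^2 m\,dx$ over $\mathcal G(m_0)$; Burton's theorem (Proposition~\ref{Teobart87}) requires \emph{uniqueness}. You propose to recover uniqueness via the non-degeneracy condition ``$u_{\check m_1}$ has no level set of positive measure'', appealing to unique continuation or analyticity. Neither applies here: $\check m_1$ is merely $L^\infty$, so $u_{\check m_1}$ is not analytic, and unique continuation forbids $u$ from \emph{vanishing} on a set of positive measure, not from being \emph{constant} on one. Indeed, if $u_{\check m_1}\equiv c>0$ on a set $E$ with $|E|>0$, the equation gives only $\check m_1=0$ a.e.\ on $E$, which is perfectly consistent whenever $m_0$ has a fat zero level set.

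The paper sidesteps this entirely: instead of a weak first-order inequality, it proves the \emph{strict} inequality
\[
\int_\Omega \check m_1\, u_{\check m_1}^2\,dx>\int_\Omega m\, u_{\check m_1}^2\,dx\qquad\forall\, m\in\overline{\mathcal G(m_0)}\smallsetminus\{\check m_1\},
\]
so uniqueness of the linear maximizer is automatic and Proposition~\ref{Teobart87} applies without any regularity of level sets. The strictness comes from a short case analysis: if $\widetilde\mu_1(m)<\widetilde\mu_1(\check m_1)$, the convexity inequality \eqref{maria} is already strict; if $\widetilde\mu_1(m)=\widetilde\mu_1(\check m_1)$ and $m,\check m_1$ are linearly independent, apply ii) of Lemma~\ref{teo2} to the midpoint and feed that back into \eqref{maria}; if they are linearly dependent one checks directly that $m=-\check m_1$ and the inequality follows from \eqref{normaliz2}. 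This convexity-based uniqueness is the missing ingredient in your argument.
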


\begin{theorem}\label{existmax}   
Let $\lambda_1(m)$ be the principal eigenvalue of problem
\eqref{p0},   
$m_0\in L^\infty(\Omega)$      
such that the set $\{x\in \Omega:m_0(x)>0\}$ has positive Lebesgue measure, $\mathcal{G}(m_0)$ the class of rearrangements
of $m_0$ (see Definition \ref{class}) and $\overline{\mathcal{G}(m_0)}$ its closure with respect to the weak*
topology of $L^\infty(\Omega)$. Then \\
i) if $\int_\Omega m_0\;dx\leq 0$, then
\begin{equation*}  
\sup_{m\in{\mathcal{G}(m_0)}} \lambda_1(m)=+\infty;
\end{equation*}
ii) if $\int_\Omega m_0\;dx>0$, then \\
\phantom{aa} a) there is a unique solution $\hat{m}_1$ of the problem
$$\sup_{m\in\overline{{\mathcal{G}(m_0)}}} \lambda_1(m);$$ 
\phantom{aa} b) in the case of Dirichlet boundary conditions, $\hat{m}_1\geq 0$ a.e.
 in $\Omega$. Moreover, if $m_0\geq 0$\\ \phantom{aa} a.e. in $\Omega$, then 
(denoting by  $\hat{m}_1^*$  and  $m_0^*$ the decreasing rearrangements 
of $\hat{m}_1$ 
 and $m_0$\\  \phantom{aa} respectively, see Definition \ref{riord}) $\hat{m}_1^*=m_0^*$,  i.e.
$\hat{m}_1\in \mathcal{G}(m_0)$; otherwise, let  $\gamma\in(0,|\Omega|)$ 
 such\\ \phantom{aa} that
  $\int_\gamma^{|\Omega|}m_0^*\;ds=0$, then
$$\hat{m}_1^*(s)=
\begin{cases}
m_0^*(s)\quad & \text{if }\ 0<s<\gamma\\
0 & \text{if }\ \gamma\leq s<|\Omega|;
\end{cases}
$$
\phantom{aa} c) in the case of Robin boundary conditions, if $m_0\geq 0$ a.e. in $\Omega$, then
$\hat{m}_1\in \mathcal{G}(m_0)$;\\
\phantom{aa} d) for both boundary conditions, there exists a decreasing function $\psi$ such
that      
\begin{equation}\label{carat2}\hat{m}_1= \psi(u_{\hat{m}_1}) \quad \text{a.e. in }\Omega, 
\end{equation}
\phantom{aa} where 
$u_{\hat{m}_1}$ is 
the unique positive eigenfunction relative to $\lambda_1(\hat{m}_1)$ normalized 
as in \eqref{normaliz1}.
\end{theorem}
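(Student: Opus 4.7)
My plan treats the four parts in order, using the weak* continuity, strict convexity and G\^ateaux differentiability of $m\mapsto 1/\lambda_1(m)$ from Lemmas~\ref{teo1}--\ref{teo3} together with Ryff's description of $\overline{\mathcal{G}(m_0)}$ as the weak* compact convex hull of $\mathcal{G}(m_0)$, whose extreme points are precisely $\mathcal{G}(m_0)$.

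For (i), the constant $\bar m=|\Omega|^{-1}\int_\Omega m_0\,dx$ lies in $\overline{\mathcal{G}(m_0)}$ because $s\bar m\leq \int_0^s m_0^*\,d\sigma$ for every $s\in(0,|\Omega|)$. Pick any $m_n\in\mathcal{G}(m_0)$ with $m_n\rightharpoonup^*\bar m\leq 0$ and argue by contradiction: if $\lambda_1(m_{n_k})\leq C$ along a subsequence, the $L^2$-normalized positive eigenfunctions $u_k$ are uniformly bounded in $H^1$ via the Rayleigh quotient, Rellich gives $u_k\to u^\star$ in $L^2$ with $\|u^\star\|_{L^2}=1$, and pairing weak* convergence of $m_{n_k}$ with strong $L^1$ convergence of $u_k^2$ yields $\int_\Omega m_{n_k}u_k^2\to\bar m\leq 0$, contradicting $\int_\Omega m_{n_k}u_k^2>0$. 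The borderline $\bar m=0$ is excluded because $\nabla u^\star=0$ together with the Dirichlet or nontrivial Robin boundary condition forces $u^\star=0$.

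For (ii)(a), every $m\in\overline{\mathcal{G}(m_0)}$ has $\int_\Omega m=\int_\Omega m_0>0$ and hence $\{m>0\}$ of positive measure, so $\lambda_1(m)$ is well defined. Weak* continuity and strict convexity of $1/\lambda_1$ on the weak* compact convex set $\overline{\mathcal{G}(m_0)}$ produce a unique minimizer, which is the unique maximizer $\hat m_1$ of $\lambda_1$. For (ii)(d), Lemma~\ref{teo3} gives $(1/\lambda_1)'(\hat m_1)[h]=D^{-1}\int_\Omega h\,u_{\hat m_1}^2\,dx$ with $D>0$, and the convex first-order condition becomes $\int_\Omega(m-\hat m_1)u_{\hat m_1}^2\,dx\geq 0$ for every $m\in\overline{\mathcal{G}(m_0)}$, i.e.\ $\hat m_1$ minimizes the linear functional $m\mapsto\int_\Omega m\,u_{\hat m_1}^2\,dx$ on the closure. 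Burton's rearrangement theorem then yields $\hat m_1=\psi(u_{\hat m_1})$ for some nonincreasing $\psi$, establishing \eqref{carat2}. For (ii)(c) and the first clause of (ii)(b), when $m_0\geq 0$ every rearrangement is nonnegative and the oppositely-ordered pairing of $m_0^*$ with the superlevel sets of $u_{\hat m_1}^2$ lies in $\mathcal{G}(m_0)$; strict-convexity uniqueness forces $\hat m_1$ to coincide with it, so $\hat m_1\in\mathcal{G}(m_0)$.

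The genuinely hard point is the second clause of (ii)(b), where $m_0$ has a nontrivial negative part in the Dirichlet setting and $\hat m_1\in\overline{\mathcal{G}(m_0)}\setminus\mathcal{G}(m_0)$. The central observation is that if the nonincreasing $\psi$ in (d) has a plateau at $0$ on some interval $[t_0,\max u_{\hat m_1}]$, then $\hat m_1\equiv 0$ on $E=\{u_{\hat m_1}\geq t_0\}$; on this open subset $E\subset\Omega$ (whose boundary lies inside $\Omega$ since $u_{\hat m_1}=0$ on $\partial\Omega$), the equation $-\Delta u_{\hat m_1}=0$ holds with $u_{\hat m_1}=t_0$ on $\partial E$, so the maximum principle forces $u_{\hat m_1}\equiv t_0$ on $E$ and $\max u_{\hat m_1}=t_0$. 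Thus $u_{\hat m_1}^2$ acquires a flat level set at $t_0^2$ on $E$, which is precisely what makes the linear minimizer of $\int m u_{\hat m_1}^2$ over $\overline{\mathcal{G}(m_0)}$ nonunique and compatible with $\hat m_1\notin\mathcal{G}(m_0)$. Self-consistency forces $\hat m_1\geq 0$ with $\hat m_1=0$ on $E$, and matching the integral constraint $\int_\Omega\hat m_1=\int_\Omega m_0$ with $|E|=|\Omega|-\gamma$ pins down $\gamma$ via $\int_\gamma^{|\Omega|}m_0^*\,ds=0$, giving the asserted form $\hat m_1^*(s)=m_0^*(s)\chi_{(0,\gamma)}(s)$. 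Membership of this candidate in $\overline{\mathcal{G}(m_0)}$ reduces to $\int_\gamma^s m_0^*\geq 0$ on $(\gamma,|\Omega|)$, immediate from the monotonicity of $m_0^*$ and $\int_\gamma^{|\Omega|}m_0^*=0$. The main obstacle is executing this self-consistent fixed-point argument rigorously: constructing $\hat m_1$ via a fixed point of the map sending a weight to the oppositely-ordered rearrangement of the truncated $m_0^*$ against its own eigenfunction, and verifying that the plateau $[t_0,\max u]$ is nondegenerate and matches $\int_\gamma^{|\Omega|}m_0^*=0$.
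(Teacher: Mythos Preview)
Your treatment of (i) and (ii)(a) is fine and matches the paper's approach (though for (i) the paper simply invokes the weak* continuity of $\widetilde{\mu}_1$ from Lemma~\ref{teo1}, which already encapsulates your Rellich argument). The genuine gap is in (ii)(d), where you write ``Burton's rearrangement theorem then yields $\hat m_1=\psi(u_{\hat m_1})$''. Burton's result (Proposition~\ref{Teobart87}) requires the linear minimizer over $\mathcal{G}(m_0)$ to be \emph{unique}; but $\hat m_1$ need not lie in $\mathcal{G}(m_0)$, and---as you yourself observe later---$u_{\hat m_1}^2$ can have a level set of positive measure, so the linear problem \eqref{linprob} has many minimizers and Burton's theorem is inapplicable. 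Since your arguments for (b), (c) and the sign-changing case all rest on already having (d), the whole second half becomes circular. Your ``self-consistency'' heuristic for $\hat m_1\geq 0$ presupposes the existence of $\psi$; it does not prove it.

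The paper proceeds in the opposite order. From the first-order condition \eqref{linprob} it first runs a swapping argument \`a la Burton--McLeod: with $P=\{\hat m_1>0\}$, $N=\{\hat m_1<0\}$, $Z=\{\hat m_1=0\}$ and $p=\esssup_P u_{\hat m_1}^2$, exchanging values of $\hat m_1$ between a set in $N\cup Z$ where $u_{\hat m_1}^2<p-2\epsilon$ and a set in $P$ where $u_{\hat m_1}^2>p-\epsilon$ via a measure-preserving bijection would strictly decrease $\int m\,u_{\hat m_1}^2$, contradicting \eqref{linprob}; hence $u_{\hat m_1}^2\geq p$ on $N\cup Z$. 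In the Dirichlet case the maximum principle on $\{u_{\hat m_1}>\sqrt{p}\}$ then forces $u_{\hat m_1}^2\equiv p$ on $N\cup Z$, and the equation a.e.\ on $N$ gives $|N|=0$, i.e.\ $\hat m_1\geq 0$. Only \emph{after} this does the paper construct $\psi$: on $P=\{u_{\hat m_1}<\sqrt{p}\}$ the level sets of $u_{\hat m_1}$ have zero measure (from the equation), so Proposition~\ref{Teobart89} produces a decreasing $\psi$ on $(0,\sqrt{p})$, extended by $0$ on $[\sqrt{p},\infty)$, with $\psi\circ u_{\hat m_1}$ a rearrangement of a suitable nonnegative reference weight; Proposition~\ref{Teobart87bis} then identifies $\hat m_1=\psi\circ u_{\hat m_1}$. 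For sign-changing $m_0$ the paper does \emph{not} use a fixed-point scheme: it introduces the nonnegative $\overline{m}_0$ of Proposition~\ref{burtgen}(ii) with $\overline{m}_0^*=m_0^*\chi_{(0,\gamma)}$, checks directly that $\hat m_1\prec\overline m_0$ (using $\hat m_1\geq 0$ and $\int_\gamma^{|\Omega|}m_0^*=0$), and then reruns the nonnegative-case argument inside $\overline{\mathcal{G}(\overline m_0)}$. The pieces you are missing are the swapping argument and this reduction via Proposition~\ref{burtgen}(ii).
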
   

Theorem \ref{exist} and part of Theorem \ref{existmax} in the case of the Dirichlet boundary 
conditions are contained in \cite{CCP}. In this paper we give an alternative treatment by which we 
are able to handle both Dirichlet and Robin boundary conditions here and Neumann in \cite{AC23}.
The main novelty of this paper is the investigation of the maximization of the principal eigenvalue,
which is rarely discussed in literature. In particular, for the Dirichlet boundary conditions, whenever the 
maximizer exists, we explicitly describe its decreasing rearrangement.\\
From a biological point of view, Theorem \ref{exist} says that, within our class of local growth 
rates $\mathcal{G}(m_0)$, there exists at least a rearrangement $\check{m}_1$
of the resources in the environment which provides the largest range of the diffusion rate $d$ 
such that the population persists (recall that, in this case, we have $d<1/\lambda_1(m)$).
In other words,  this configuration of the resources maximizes 
the chances of survival. Moreover, by \eqref{carat0},  the more favourable the habitat is, the 
larger the population density is. In the case of the Dirichlet boundary conditions, this means that the 
the most favourable habitat must be located far from the boundary of $\Omega$.  This is perfectly consistent with the results obtained in \cite{CC,CC91} and, obviously, 
with the conclusion in \cite{CCP}.

Conversely, Theorem \ref{existmax} gives information about the best way of distributing the 
resources through the environment $\Omega$ to maximize the possibilities of extinction of the
population. A typical practical application is the problem of determining how to distribute the
pesticide (which is a negative resource) over the environment in order to have the greatest negative 
impact on the pest population when the pesticide is not enough to treat the whole region $\Omega$.
Precisely, i) says that, if the total resources are nonpositive (i.e., the total unfavourable habitat 
exceeds or is equal to the favourable one), then,  
for any given diffusion rate, we can find a rearrangement of resources which guarantees the extinction of the population (in this case, we have $d\geq 1/\lambda_1(m)$).
We underline that, in this situation, it does not exist an optimal configuration in the class $\mathcal{G}
(m_0)$, nevertheless, in some sense (see Theorem \ref{lemmafondmax} for details), the nonpositive constant
$c=\frac{1}{|\Omega|}\int_\Omega m_0\,dx$ is an optimal disposition of the resources in the extended 
class $\overline{\mathcal{G}(m_0)}$.
Therefore, a natural question arises: which rearrangements are more suitable to accomplish the extinction of the population?
Being $\lambda_1(m)$ continuous on $\overline{\mathcal{G}(m_0)}$ endowed with the weak* 
topology of $L^\infty(\Omega)$ (see Lemma \ref{teo1}), we should choose $m$ close
to $c\leq 0$. We think this implies that $m$ should be somewhere equal to $c$ and elsewhere 
highly oscillatory, which is consistent with the conclusion in \cite{CC} in the case of the Dirichlet 
boundary conditions.

Under the hypothesis ii), that is when the total resources are positive, there is a
unique disposition of the resources which gives the largest range of the diffusion rate which causes the 
extinction of the population. However, in general, this disposition is not a plain rearrangement of the 
given resources (i.e. it does not belong to $\mathcal{G}(m_0)$).\\
In the case of the Dirichlet boundary conditions, this disposition is obtained first, by using the 
negative resources to cancel out a suitable part of the positive ones, and then rearranging appropriately
what remains (this disposition belongs to $\overline{\mathcal{G}(m_0)}$).
As a consequence, it corresponds to an environment everywhere favourable
and it is a genuine rearrangement of the initial resources if and only if they are all positive (i.e. $m_0\geq 0$).
Then, we can ask: when the initial resources are both positive and negative, 
which rearrangements of them are more suitable to produce the extinction of the population?
Similarly to the case i), we should choose $m$ close
to $\hat{m}_1$. We think this implies that $m$ should be equal to $\hat{m}_1$ where $
\hat{m}_1$ is positive and highly oscillatory where $\hat{m}_1$ is equal to zero
(the more $m$ is close to $\hat{m}_1$, the higher is the frequency of the oscillations).
We will explore this issue in a future paper. Biologically, this means that a precise
share of the favourable habitat must be concentrated near the boundary (i.e. where $\hat{m}_1$
is positive) and the rest must be located far from it (where $\hat{m}_1$ is zero)  and interlaced 
as much as possible with the unfavourable habitat. In other words, we have a concentration of 
positive resources near the boundary and a highly fragmented habitat far from it.
Returning to the example of a pest population (and considering the exterior of the environment 
as hostile, i.e. Dirichlet boundary conditions), the best strategy to exterminate it is to distribute
the pesticide not across the entire environment, but rather appropriately far from its boundary.\\
The case of the Robin boundary conditions is more complex: we prove that the optimal disposition
is a plain rearrangement of the initial resources provided they are positive everywhere.
For both boundary conditions, by \eqref{carat2}, the population density is small where the habitat is
favourable. We observe that the increased complexity of this case is probably 
due to the fact that its behaviour is in between the problems with Neumann and Dirichlet conditions
(see \cite{CC91, LLNP}).\\
Incidentally, we note that the results about the minimization of the principal eigenvalue in the case of Neumann 
boundary conditions are similar to those contained in Theorem \ref{exist}, conversely the conclusions about
the maximization problem are much less involved of those in Theorem \ref{existmax} (see 
\cite{AC23}). \\
Regarding the physical interpretation of \eqref{p0}, Theorem \ref{exist} means that, in order to
obtain the lowest principal natural frequency, the more dense material should be placed far
from the boundary of the membrane. Vice versa, by Theorem \ref{existmax}, the highest principal natural frequency is 
attained building the membrane with the more dense material near
the boundary.

This paper is structured as follows. In Section \ref{preliminaries} we set the functional framework 
and some tools in order to study the spectrum of problem \eqref{p0}.
In Section \ref{rearrangements} we collect some known results about rearrangements of 
measurable functions and give an alternative proof and generalization of a lemma in \cite{B87},
which we will use to prove Theorem \ref{existmax}.        
 Finally, in Sections \ref{properties} and \ref{opt} we give the proofs of the main results.

\section{Notations, preliminaries and weak formulation of \eqref{p0}}\label{preliminaries}   
Throughout the paper, and unless otherwise specified, measurable means Lebesgue measurable and $|E|$ denotes the Lebesgue measure of a measurable set $E\subseteq \mathbb{R}^N$.\\
Let $\Omega \subset \mathbb{R}^N$, $N\geq 1$, be a bounded connected open set with Lipschitz boundary
$\partial \Omega$.\\
 We denote  by $L^
\infty(\Omega)$, $L^2(\Omega)$, $H^1(\Omega)$ and $H_0^1(\Omega)$  the usual Lebesgue and Sobolev spaces. 
The norm in $L^\infty (\Omega)$ is denoted by 
\begin{equation*}
\|u\|_{L^\infty(\Omega)}=\esssup_{\Omega} |u| \quad \forall\, u\in L^\infty(\Omega),       
\end{equation*}
and by weak* convergence we always mean the weak* convergence in $L^\infty(\Omega)$.  
The scalar product and norm in $L^2(\Omega)$ are denoted by  
\begin{equation*}
\langle u,v \rangle_{L^2(\Omega)}=\int_\Omega uv \, dx \quad \forall\, u, v\in L^2(\Omega)
\end{equation*}
and
\begin{equation*}
\|u\|_{L^2(\Omega)}=\langle u,u \rangle^{1/2}_{L^2(\Omega)} \quad \forall\, u\in L^2(\Omega)
\end{equation*}  
respectively.
Let $\sigma(x)\in C(\partial \Omega)$ and $\sigma(x)\geq 0$, as a limit case we also allow $\sigma(x)\equiv +\infty$. In $H^1(\Omega)$ we consider the scalar product 
\begin{equation*}
\langle u,v \rangle_{H^1(\Omega)}=\int_\Omega \nabla u \cdot \nabla v \, dx +\int_{\partial\Omega}\sigma uv\, dS 
\quad \forall\, u, v\in H^1(\Omega)
\end{equation*}
and the associated norm
\begin{equation}  \label{normaH1}  
\|u\|_{H^1(\Omega)}=\left(\langle u,u \rangle_{H^1(\Omega)}\right)^{1/2} \quad \forall\, u\in H^1(\Omega).
\end{equation}      
It can be shown that \eqref{normaH1} is equivalent to the usual norm (see, for example,\cite{Mik}).       
Finally, in $H_0^1(\Omega)$ we use the standard scalar product and the associated norm
\begin{equation*}
\langle u,v \rangle_{H_0^1(\Omega)}=\int_\Omega \nabla u \cdot \nabla v \, dx \quad \forall\, u, v\in H_0^1(\Omega)
\end{equation*}
and
\begin{equation*}        
\|u\|_{H_0^1(\Omega)}=\left(\langle u,u \rangle_{H_0^1(\Omega)}\right)^{1/2} \quad \forall\, u\in H_0^1(\Omega)
\end{equation*}
respectively.
\noindent In the sequel the spaces $H^1(\Omega)$ and $H_0^1(\Omega)$ will be always endowed with the scalar products and norms defined above. In order to give an unified treatment of problem \eqref{p0} with both Dirichlet and Robin boundary conditions, 
we introduce the notation 
\begin{equation*}
W_\sigma(\Omega)=\begin{cases} H^1(\Omega) \quad &\text{if } \sigma(x)\in C(\partial \Omega),\ \sigma(x)\geq 0,\ \sigma(x)\not
\equiv 0\\
 H_0^1(\Omega)  &\text{if } \sigma(x)\equiv +\infty.
\end{cases}
\end{equation*}
The right scalar product and norm in $W_\sigma(\Omega)$ will be denoted by  $\langle \cdot,\cdot \rangle_{W_\sigma(\Omega)}$ 
and $\|\cdot\|_{W_\sigma(\Omega)}$ respectively.
The case $\sigma(x)\equiv 0$, corresponding to the Neumann boundary conditions, is rather different and it has been treated in
\cite{AC23}.

\subsection{The operators $E_m$ and $G_m$}\label{operators}

We study the eigenvalues of problem \eqref{p0} by means of the spectrum of an operator that we introduce in this subsection.\\
Let $m\in L^\infty(\Omega)$. 
For every $f\in L^2(\Omega)$ let us consider the following continuous linear functional of $W_\sigma(\Omega)$ 
\begin{equation*} 
\varphi\mapsto
 \langle m f,\varphi\rangle_{L^2(\Omega)} \quad\forall\, \varphi\in W_\sigma(\Omega).
\end{equation*}
By the Riesz Theorem, 
there exists a unique $u\in W_\sigma(\Omega)$ such that 
\begin{equation}\label{volpe} 
\langle  u, \varphi\rangle_{W_\sigma(\Omega)}
= \langle m f,\varphi\rangle_{L^2(\Omega)} \quad\forall\, \varphi\in W_\sigma(\Omega)
\end{equation}
holds. 
Let us introduce the linear operator
\begin{equation*}
 E_m:L^2(\Omega)\to W_\sigma(\Omega),
\end{equation*}   
where $u=E_m(f)$ is the unique function in $W_\sigma(\Omega)$ that satisfies \eqref{volpe}, i.e.  for
all $f\in  L^2(\Omega)$, $E_m(f)$ is defined by 
\begin{equation*}  
\langle E_m(f) , \varphi\rangle_{W_\sigma(\Omega)}
= \langle m f,\varphi\rangle_{L^2(\Omega)} \quad\forall\, \varphi\in W_\sigma(\Omega).
\end{equation*}    
$E_m$ is clearly linear. Moreover,  putting $\varphi=u$ in \eqref{volpe}, using Poincar\'e inequality when $\sigma(x)\equiv
+\infty$ and the equivalence of the norm \eqref{normaH1} with the usual norm of $H^1(\Omega)$ when $\sigma(x)$ is bounded, 
we find         
\begin{equation}\label{normau}    
\|u\|_{W_\sigma(\Omega)}\leq C(\sigma)\|m\|_{L^\infty(\Omega)}\|f\|_{L^2(\Omega)},   
\end{equation}  
where $C(\sigma)$ is a constant depending on $\sigma$.\\     
The inequality \eqref{normau} implies
\begin{equation}\label{normaEm}
\|E_m\|_{\mathcal{L}(L^2(\Omega), W_\sigma(\Omega))}\leq C(\sigma)\|m\|_{L^\infty(\Omega)}.
\end{equation}   
Let $i_\sigma$ be the compact inclusion of $W_\sigma(\Omega)$ into $L^2(\Omega)$.  
Moreover, we define a further linear operator
\begin{equation}\label{Gm}   
G_m:W_\sigma(\Omega)\to W_\sigma(\Omega),  
\end{equation}  
by the composition $G_m=E_m\circ i_\sigma$, i.e. for all $f\in W_\sigma(\Omega)$, $G_m(f)$     
is given by    
\begin{equation}\label{Gmf} 
\langle G_m(f) , \varphi\rangle_{W_\sigma(\Omega)}
= \langle m f,\varphi\rangle_{L^2(\Omega)} \quad\forall\, \varphi\in W_\sigma(\Omega).
\end{equation}   
The inequality \eqref{normau} implies
\begin{equation*}\quad
\|G_m\|_{\mathcal{L}(W_\sigma(\Omega), W_\sigma(\Omega))}\leq C(\sigma)^2\|m\|_{L^\infty(\Omega)}.
\end{equation*}    
In the sequel we will use the straightforward formula
\begin{equation}\label{linearit}
G_{am +bq}= a\,  G_{m}+b\, G_q\quad \forall\, m,q\in L^\infty(\Omega), \ 
\forall\, a, b\in \mathbb{R}.
\end{equation}

\begin{proposition}\label{proprietaop}
Let $m\in L^\infty(\Omega)$ and $G_m$ be the operator
defined by \eqref{Gmf}. Then $G_m$ is self-adjoint and compact.
\end{proposition}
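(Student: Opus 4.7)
The proposition asserts two independent facts about $G_m$, and each follows directly from the definition of $G_m$ via the variational identity \eqref{Gmf} together with basic Hilbert space / Sobolev embedding facts. I would address them separately.

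For self-adjointness, my plan is to compute $\langle G_m(f),g\rangle_{W_\sigma(\Omega)}$ and $\langle f,G_m(g)\rangle_{W_\sigma(\Omega)}$ for arbitrary $f,g\in W_\sigma(\Omega)$ and show they agree. Using \eqref{Gmf} with $\varphi=g$ gives
\[
\langle G_m(f),g\rangle_{W_\sigma(\Omega)}=\langle mf,g\rangle_{L^2(\Omega)}=\int_\Omega mfg\,dx,
\]
and since $m\in L^\infty(\Omega)$ is real-valued, the right-hand side is symmetric in $f$ and $g$; swapping roles via \eqref{Gmf} with $\varphi=f$ yields $\int_\Omega mfg\,dx=\langle G_m(g),f\rangle_{W_\sigma(\Omega)}=\langle f,G_m(g)\rangle_{W_\sigma(\Omega)}$. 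That finishes self-adjointness.

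For compactness, I would use the decomposition $G_m=E_m\circ i_\sigma$ already built into the definition \eqref{Gm}. The inclusion $i_\sigma:W_\sigma(\Omega)\hookrightarrow L^2(\Omega)$ is compact by the Rellich--Kondrachov theorem (valid for $H^1(\Omega)$ since $\Omega$ is bounded with Lipschitz boundary, and therefore a fortiori for $H_0^1(\Omega)$). The operator $E_m:L^2(\Omega)\to W_\sigma(\Omega)$ is bounded, as recorded in \eqref{normaEm}. Since the composition of a bounded operator with a compact operator is compact, $G_m$ is compact.

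There is no serious obstacle here: the construction of $E_m$ through Riesz representation was carefully arranged so that these properties become essentially automatic. The only points one must be a little careful about are (i) invoking Rellich--Kondrachov uniformly in the two cases $W_\sigma(\Omega)=H^1(\Omega)$ and $W_\sigma(\Omega)=H_0^1(\Omega)$, which is standard under the Lipschitz hypothesis on $\partial\Omega$, and (ii) making sure the symmetry argument uses real test functions (which is the setting adopted throughout the paper), so the scalar product on $L^2(\Omega)$ really is symmetric in its arguments.
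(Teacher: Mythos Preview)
Your proof is correct and follows essentially the same approach as the paper: self-adjointness via the symmetry of $\langle mf,g\rangle_{L^2(\Omega)}$ using \eqref{Gmf}, and compactness from the factorization $G_m=E_m\circ i_\sigma$ with $i_\sigma$ compact and $E_m$ bounded. The paper's argument is slightly terser (it does not name Rellich--Kondrachov explicitly), but there is no substantive difference.
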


\begin{proof}
For all $f, g\in W_\sigma(\Omega)$, by \eqref{Gmf}, we have
\begin{equation*}
\langle G_m(f), g\rangle_{W_\sigma(\Omega)} =
\langle m f, g\rangle_{L^2(\Omega)}
= \langle m g, f\rangle_{L^2(\Omega)}
=\langle G_m (g), f\rangle_{W_\sigma(\Omega)},
 \end{equation*} 
then $G_m$ is self-adjoint.\\
 The compactness of the operator $G_m$  is an immediate consequence 
of the representation $G_m=E_m \circ i_\sigma $.  
\end{proof} 
 
By general theory of self-adjoint compact operators (see \cite{DF,Lax})
it follows that all nonzero eigenvalues of $G_m$ have a finite dimensional eigenspace 
and they can be obtained by the {\it Fischer's Principle}   
\begin{equation}\label{Fischer1}  \mu_k(m) =
\sup_{F_k\subset W_\sigma(\Omega) }
\inf_{f\in F_k\atop f\neq 0}
\cfrac{\langle G_m (f), f \rangle_{W_\sigma(\Omega)} }{\|f\|^2_{W_\sigma(\Omega)}} \,
= \sup_{F_k\subset W_\sigma(\Omega)}\inf_{f\in F_k\atop f\neq 0}
\cfrac{\int_\Omega m f^2 \, dx }{\|f\|^2_{W_\sigma(\Omega)}} \,, \quad k=1, 2, 3, \ldots\end{equation}  
and
\begin{equation*}      
\mu_{-k}(m) =  \inf_{F_k\subset W_\sigma(\Omega)}
\sup_{f\in F_k\atop f\neq 0}
\cfrac{\langle G_m (f), f \rangle_{W_\sigma(\Omega)} }{\|f\|^2_{W_\sigma(\Omega)}} \,=
\inf_{F_k\subset W_\sigma(\Omega)}
\sup_{f\in F_k\atop f\neq 0}
\cfrac{\int_\Omega m f^2 \, dx }{\|f\|^2_{W_\sigma(\Omega)}}\,,\quad k=1, 2, 3,  \ldots,
\end{equation*}      
where the first extrema are taken over all the subspaces $F_k$ of $W_\sigma(\Omega)$ of dimension
$k$. As observed in \cite{DF}, all the inf's and sup's in the above characterizations of the eigenvalues are actually
assumed. Hence, they could be replaced by min's and max's and the eigenvalues are obtained 
exactly in correspondence of the associated eigenfunctions.
The sequence $\{\mu_k(m)\}$ contains all the real positive eigenvalues (repeated with their multiplicity), is decreasing and converging to zero, whereas $\{\mu_{-k}(m)\}$ is formed by all the real negative
eigenvalues (repeated with their multiplicity), is increasing and converging to zero.\\ 
We will write $\{m>0\}$ as short form of $\{x\in \Omega: m(x)>0\}$ and similarly 
$\{m<0\}$ for $\{x\in \Omega: m(x)<0\}$. 
  
\begin{proposition}\label{segnorho}Let $m \in L^\infty(\Omega)$, $G_m$ be the operator 
\eqref{Gmf} and $\mu_k(m)$, $\mu_{-k}(m)$ its eigenvalues. 
Then, the following statements hold\\   
i) if $|\{m>0\}|=0$, then there are no positive eigenvalues;\\
ii) if $|\{m>0\}|>0$, then there is a sequence of positive 
eigenvalues $\mu_k(m)$;\\
iii) if $|\{m<0\}|=0$, then there are no negative eigenvalues;\\
iv) if $|\{m<0\}|>0$, then there is a sequence of negative 
eigenvalues $\mu_{-k}(m)$.
\end{proposition}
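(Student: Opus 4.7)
My plan is to use Fischer's principle \eqref{Fischer1}. First, I would observe that statements (iii) and (iv) follow at once from (i) and (ii) applied to $-m$: by \eqref{linearit} we have $G_{-m}=-G_m$, so $\mu>0$ is an eigenvalue of $G_{-m}$ if and only if $-\mu<0$ is an eigenvalue of $G_m$, while $|\{-m>0\}|=|\{m<0\}|$. Hence it suffices to establish (i) and (ii).

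For (i), the assumption $|\{m>0\}|=0$ means $m\leq 0$ a.e.\ in $\Omega$, so by \eqref{Gmf},
\[
\langle G_m(f),f\rangle_{W_\sigma(\Omega)}=\int_\Omega m f^2\,dx\leq 0\qquad\forall\,f\in W_\sigma(\Omega).
\]
If $\mu>0$ were an eigenvalue with eigenfunction $u\neq 0$, then $\mu\|u\|^2_{W_\sigma(\Omega)}=\langle G_m(u),u\rangle_{W_\sigma(\Omega)}\leq 0$, a contradiction.

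For (ii), by \eqref{Fischer1} it is enough to exhibit, for each $k\in\mathbb{N}$, a $k$-dimensional subspace $F_k\subset W_\sigma(\Omega)$ on which the Rayleigh quotient is uniformly positive; this will yield $\mu_k(m)>0$ for every $k$, and hence an infinite sequence of positive eigenvalues (which then decreases to zero by compactness of $G_m$). Since $|\{m>0\}|>0$, I would fix $\varepsilon>0$ with $|\{m>\varepsilon\}|>0$, choose $k$ pairwise disjoint measurable subsets $A_1,\dots,A_k$ of $\{m>\varepsilon\}$, each of positive measure, mutually separated and at positive distance from $\partial\Omega$ (possible after discarding a boundary strip and subdividing), and mollify: set $\psi_i=\chi_{A_i}\ast\rho_\eta$ with a standard kernel $\rho_\eta$. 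For $\eta$ small enough, the $\psi_i$ lie in $C^\infty_c(\Omega)\subset W_\sigma(\Omega)$ with pairwise disjoint supports, and since $\psi_i\to\chi_{A_i}$ in $L^2(\Omega)$ with $m\in L^\infty(\Omega)$,
\[
\int_\Omega m\psi_i^2\,dx\longrightarrow \int_{A_i} m\,dx\geq\varepsilon|A_i|>0\quad\text{as }\eta\to 0,
\]
so $c_0:=\min_i\int_\Omega m\psi_i^2\,dx>0$ for $\eta$ sufficiently small. Setting $F_k=\mathrm{span}(\psi_1,\dots,\psi_k)$, the disjoint supports give, for $f=\sum_i c_i\psi_i$, $\int_\Omega m f^2\,dx=\sum_i c_i^2\int_\Omega m\psi_i^2\,dx\geq c_0\sum_i c_i^2$, while $\|f\|^2_{W_\sigma(\Omega)}\leq C\sum_i c_i^2$ by equivalence of norms on the finite-dimensional $F_k$. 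Therefore the Rayleigh quotient is bounded below by $c_0/C>0$ on $F_k\setminus\{0\}$, as required. The main obstacle is precisely this construction: one must preserve both the disjointness of supports and the strict positivity of $\int_\Omega m\psi_i^2\,dx$ under mollification, which is handled by choosing the $A_i$ well separated inside the superlevel set $\{m>\varepsilon\}$ and taking $\eta$ small enough; everything else is routine bookkeeping.
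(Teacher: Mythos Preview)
Your proof is correct and follows essentially the same strategy as the paper: part (i) uses nonpositivity of the quadratic form $\int_\Omega m f^2\,dx$, and part (ii) applies Fischer's principle to a $k$-dimensional subspace spanned by smooth functions with pairwise disjoint supports and positive weighted mass. The only cosmetic differences are that the paper produces these test functions by first finding $k$ disjoint closed balls $B_i\subset\Omega$ with $|B_i\cap\{m>0\}|>0$ (via covering/density arguments) and then selecting $f_i\in C^\infty_0(B_i)$ normalised so that $\int_\Omega m f_i^2\,dx=1$, whereas you mollify characteristic functions of separated subsets of $\{m>\varepsilon\}$; and your explicit reduction of (iii)--(iv) to (i)--(ii) via $G_{-m}=-G_m$ is a touch cleaner than the paper's ``similarly proved''.
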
 

\begin{proof} 
i) Let $\mu$ be an eigenvalue and $u$ a corresponding eigenfunction. By \eqref{Gmf} with 
$f=\varphi=u$ we have    
$$\mu
= \cfrac{\int_\Omega m u^2\, dx }{\|u\|^2_{W_\sigma(\Omega)}}\, \leq 0.$$    
ii) By measure theory covering theorems, for each positive integer $k$ there exist $k$ 
disjoint closed balls $B_1, \ldots, B_k$ in $\Omega$ such that $| B_i \cap \{m>0\}|>0$ 
for $i=1, \ldots, k$. Let $f_i\in C^\infty_0(B_i)$ such that 
$\int_\Omega m f_i^2 \, dx=1$ for every $i=1, \ldots, k$. 
Note that the functions  $f_i$ are linearly independent and let $F_k= \ $span$  \{f_1, \ldots, f_k\}$. $F_k$ is  
a subspace of $W_\sigma(\Omega)$. For every $f\in F_k\smallsetminus 
\{0\}$, let $f=\sum_{i=1}^k a_i f_i$, $a_i\in \mathbb{R}$. 
Then, by \eqref{Gmf} we have   
\begin{equation*} \begin{split}\cfrac{\langle G_m (f), 
f\rangle_{W_\sigma(\Omega)}}{\|f\|^2_{W_\sigma(\Omega)}}\, & =
\cfrac{\int_\Omega mf^2\, dx}{\|f\|^2_{W_\sigma(\Omega)}} \,=  
\cfrac{\sum_{i,j=1}^k a_{i}a_j \int_\Omega mf_i f_j\, dx}
{\sum_{i,j=1}^k a_{i}a_j \langle f_i, f_j\rangle_{W_\sigma(\Omega)}} \,\\ &   
=\cfrac{\sum_{i=1}^k a_i^2}{\sum_{i,j=1}^k a_{i}a_j \langle f_i, f_j\rangle_{W_\sigma(\Omega)}} \,            
=\cfrac{\|a\|^2_{\mathbb{R}^k}}{\langle A_k a, a\rangle_{\mathbb{R}^k}}\,\geq \cfrac{1}{\|A_k\|}\, >0,
\end{split}
\end{equation*}
where $\|a\|_{\mathbb{R}^k}$, $\|A_k\|$ and $\langle A_k a, a\rangle_{\mathbb{R}^k}$ denote
the euclidean norm of the vector $a=(a_1, \ldots, a_k)$, the norm of the non null matrix 
$A_k=\left( \langle f_i, f_j\rangle_{W_\sigma(\Omega)} \right)_{i, j=1}^k$ and the scalar product in $\mathbb{R}^k$, 
respectively.
From the Fischer's Principle \eqref{Fischer1} we conclude that $\mu_k(m)\geq \cfrac{1}{\|A_k\|}\, >0$ for
every $k$.\\
The cases iii) and iv) are similarly proved.
\end{proof}
In the case $\sigma(x)\equiv+\infty$ (Dirichlet boundary conditions) and for a general elliptic operator, the
previous proposition has been proved in \cite{DF}. 
 
\subsection{Weak formulation of problem \eqref{p0}} 

A function $u\in W_\sigma(\Omega)$ is said to be an \emph{eigenfunction} of \eqref{p0}  associated to the \emph{eigenvalue} $\lambda$ if 
\begin{equation}\label{FD1dautov}
\int_\Omega \nabla u\cdot \nabla\varphi \, dx = \lambda \int_\Omega m u\varphi\, dx \quad\forall\, \varphi\in H_0^1(\Omega) 
\end{equation}
for $\sigma(x)\equiv+\infty$ and if
\begin{equation}\label{FD1rautov}
\int_\Omega \nabla u\cdot \nabla\varphi \, dx +\int_{\partial \Omega} \sigma u\varphi\, dS= \lambda \int_\Omega m u\varphi\, dx \quad\forall\, \varphi\in H^1(\Omega) 
\end{equation}
for $\sigma(x)$ bounded.
Equivalently, using the scalar product in $W_\sigma(\Omega)$, \eqref{FD1dautov} and \eqref{FD1rautov} are summarized by
\begin{equation}\label{falena}
\langle  u, \varphi\rangle_{W_\sigma(\Omega)}
=\lambda \langle m u,\varphi\rangle_{L^2(\Omega)} \quad\forall\, \varphi\in W_\sigma(\Omega).
\end{equation} 
It is easy to check that zero is not an eigenvalue of \eqref{p0}.

\begin{proposition}\label{equivalenzalambdamu}
Let $m \in L^\infty(\Omega)$ and $G_m$ be
the operator \eqref{Gm}. 
Then the nonzero eigenvalues of problem \eqref{p0} are exactly the reciprocal 
of the nonzero eigenvalues of the operator $G_m$ and the correspondent eigenspaces coincide. 
\end{proposition}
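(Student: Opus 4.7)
The plan is to show the two implications separately by directly comparing the weak formulation \eqref{falena} with the defining identity \eqref{Gmf} for the operator $G_m$.

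First, I would take $\lambda\neq 0$ to be an eigenvalue of problem \eqref{p0} with eigenfunction $u\in W_\sigma(\Omega)$. By \eqref{falena} we have
\begin{equation*}
\langle u,\varphi\rangle_{W_\sigma(\Omega)}=\lambda\,\langle mu,\varphi\rangle_{L^2(\Omega)}\quad\forall\,\varphi\in W_\sigma(\Omega),
\end{equation*}
and by the definition \eqref{Gmf} of $G_m$, the right-hand side equals $\lambda\,\langle G_m(u),\varphi\rangle_{W_\sigma(\Omega)}$. Since this holds for every test function $\varphi\in W_\sigma(\Omega)$, the Riesz representation gives $u=\lambda\,G_m(u)$, that is $G_m(u)=(1/\lambda)\,u$, showing that $1/\lambda$ is an eigenvalue of $G_m$ with the same eigenfunction $u$.

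Conversely, I would start from a nonzero eigenvalue $\mu$ of $G_m$ with eigenfunction $u\in W_\sigma(\Omega)$, so that $G_m(u)=\mu u$. Taking the scalar product of both sides with an arbitrary $\varphi\in W_\sigma(\Omega)$ and applying \eqref{Gmf} yields
\begin{equation*}
\langle mu,\varphi\rangle_{L^2(\Omega)}=\langle G_m(u),\varphi\rangle_{W_\sigma(\Omega)}=\mu\,\langle u,\varphi\rangle_{W_\sigma(\Omega)},
\end{equation*}
which, after dividing by $\mu\neq 0$, is exactly \eqref{falena} with $\lambda=1/\mu$. Hence $1/\mu$ is an eigenvalue of \eqref{p0} with the same eigenfunction $u$.

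Combining the two directions, the map $\lambda\mapsto 1/\lambda$ is a bijection between nonzero eigenvalues of \eqref{p0} and nonzero eigenvalues of $G_m$, and the calculations above show that an eigenfunction for one side is automatically an eigenfunction for the other, with the eigenspaces coinciding. I do not foresee any real obstacle here: the entire content of the proposition is a direct rewriting of the weak formulation via the Riesz isomorphism hidden inside the definition of $G_m$; the only point worth checking is that $\lambda\neq 0$ (already noted right after \eqref{falena}) and $\mu\neq 0$ are both needed to pass from one identity to the other.
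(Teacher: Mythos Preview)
Your proof is correct and follows essentially the same approach as the paper: both directions are obtained by rewriting the weak formulation \eqref{falena} via the defining identity \eqref{Gmf} for $G_m$, and the eigenspace correspondence falls out immediately. The paper's argument is identical in substance, only slightly more terse in presentation.
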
      
 \begin{proof}
If $\lambda\neq 0$, by \eqref{falena}  we have 
\begin{equation*}     
\left\langle \frac{u}{\lambda},\varphi\right\rangle_{W_\sigma(\Omega)}
=\langle m u,\varphi\rangle_{L^2(\Omega)} \quad\forall\, \varphi\in W_\sigma(\Omega)
\end{equation*}
and then, by definition \eqref{Gmf} of $G_m$, $G_m (u)= \cfrac{u}{\lambda}\, $. \\
Vice versa, let $G_m(u)=\mu u$, with $\mu\neq 0$. Then we have
\begin{equation*}
\left\langle \mu u,\varphi\right\rangle_{W_\sigma(\Omega)}
=\langle m u,\varphi\rangle_{L^2(\Omega)} \quad\forall\, \varphi\in W_\sigma(\Omega)
\end{equation*}
and then
\begin{equation*}
\left\langle u,\varphi\right\rangle_{W_\sigma(\Omega)}
=\frac{1}{\mu}\langle m u,\varphi\rangle_{L^2(\Omega)} \quad\forall\, \varphi\in W_\sigma(\Omega),
\end{equation*}  
i.e. $1/\mu$ is an eigenvalue of \eqref{p0}.
The claim follows.
\end{proof}     

Consequently, in general,
the eigenvalues of problem \eqref{p0} form two monotone sequences
$$ 0<\lambda_1(m)\leq \lambda_2(m)\leq\ldots\leq  \lambda_k(m)\leq \ldots$$
and
$$ \ldots\leq\lambda_{-k}(m)\leq\ldots\leq \lambda_{-2}(m)\leq  \lambda_{-1}(m)<0  ,$$
where every eigenvalue appears as many times as its multiplicity, the latter being finite
owing to the compactness of $G_m$. \\  
The variational characterization \eqref{Fischer1} for $k=1$, assuming that $|\{m>0\}|>0$, becomes  
  
\begin{equation}\label{mu1}  \mu_1(m) =  \max_{f\in W_\sigma(\Omega)\atop f\neq 0}
\cfrac{\langle G_m (f), f \rangle_{W_\sigma(\Omega)} }{\|f\|^2_{W_\sigma(\Omega)}} \,
=\max_{f\in W_\sigma(\Omega)\atop f\neq 0}
\cfrac{\int_\Omega m f^2 \, dx }{\|f\|^2_{W_\sigma(\Omega)}}.  
\end{equation}    
The maximum in \eqref{mu1} is obtained if and only if $f$ is an eigenfunction relative to $\mu_1(m)$.
From \eqref{mu1}, it follows a similar variational characterization of $\lambda_1(m)$ and an analogous remark applies. 
Thus, for $\lambda_1(m)$ we have
\begin{equation*}
\lambda_1(m) = \min_{u\in W_\sigma(\Omega)\atop \int_\Omega m u^2dx>0}
\cfrac{\|u\|^2_{W_\sigma(\Omega) }}{\int_\Omega m u^2 \, dx}\,.
\end{equation*}

\begin{proposition}\label{simple} Let $m\in L^\infty(\Omega)$ such that $|\{m>0\}|>0$. Then $\mu_1(m)$ is simple and any associated eigenfunction is one signed in $\Omega$.
\end{proposition}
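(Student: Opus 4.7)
The plan is to exploit the variational characterization of $\mu_1(m)$ in \eqref{mu1} together with the equivalence between eigenfunctions of $G_m$ and of problem \eqref{p0} given by Proposition \ref{equivalenzalambdamu}. The first step is to prove that, if $u$ is an eigenfunction associated to $\mu_1(m)$, then its positive and negative parts $u^+$ and $u^-$, whenever not identically zero, are themselves eigenfunctions for $\mu_1(m)$. Since $\nabla u^\pm = \chi_{\{\pm u>0\}}\nabla u$ a.e.\ and $u^2=(u^+)^2+(u^-)^2$ pointwise (in particular on $\partial\Omega$ in the Robin case), one has the splittings
$$\|u\|^2_{W_\sigma(\Omega)}=\|u^+\|^2_{W_\sigma(\Omega)}+\|u^-\|^2_{W_\sigma(\Omega)},\qquad \int_\Omega mu^2\,dx=\int_\Omega m(u^+)^2\,dx+\int_\Omega m(u^-)^2\,dx.$$
Combined with the universal upper bounds $\int_\Omega m(u^\pm)^2\,dx\le\mu_1(m)\|u^\pm\|^2_{W_\sigma(\Omega)}$ from \eqref{mu1}, the identity $\int_\Omega mu^2\,dx=\mu_1(m)\|u\|^2_{W_\sigma(\Omega)}$ forces equality in both, so that each nonzero part $u^\pm$ maximizes the Rayleigh quotient and is therefore an eigenfunction for $\mu_1(m)$.

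Next, by Proposition \ref{equivalenzalambdamu}, $u^+$ is a nonnegative weak solution of the elliptic equation $-\Delta u^+=\lambda_1(m)mu^+$ in $\Omega$ (with the corresponding Dirichlet or Robin boundary condition), and analogously for $u^-$. Since $\lambda_1(m)m\in L^\infty(\Omega)$, the strong maximum principle (equivalently, the weak Harnack inequality) for uniformly elliptic operators with bounded zero-order coefficient produces the dichotomy: $u^+\equiv 0$ or $u^+>0$ throughout $\Omega$, and likewise for $u^-$. Since $u^+u^-\equiv 0$, both cannot be strictly positive everywhere; hence exactly one of them vanishes identically, which shows that $u$ is strictly one-signed in $\Omega$.

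For simplicity, suppose $u_1$ and $u_2$ are eigenfunctions associated to $\mu_1(m)$. By the preceding step I may assume $u_1>0$ and $u_2>0$ in $\Omega$. Setting $c=\int_\Omega u_1\,dx\,/\int_\Omega u_2\,dx>0$, the linear combination $v=u_1-cu_2$ is still an eigenfunction of $\mu_1(m)$ but satisfies $\int_\Omega v\,dx=0$. Applying the one-sign property just proved to $v$, one has $v\equiv 0$, $v>0$ in $\Omega$, or $v<0$ in $\Omega$; the last two options are incompatible with $\int_\Omega v\,dx=0$, so $v\equiv 0$, i.e.\ $u_1$ and $u_2$ are linearly dependent.

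The only delicate point is the application of the strong maximum principle in the Robin case, where the boundary condition is inhomogeneous rather than homogeneous: but this is purely an interior statement, requiring only $u^\pm\in W_\sigma(\Omega)\subseteq H^1(\Omega)$ and the equation in $\Omega$, together with the $L^\infty$-boundedness of the zero-order term $\lambda_1(m)m$. Consequently the argument applies uniformly to both the Dirichlet and the Robin problem.
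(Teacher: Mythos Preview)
Your proof is correct and follows essentially the same strategy as the paper: use the variational characterization \eqref{mu1} to pass to a nonnegative eigenfunction, invoke Harnack to get strict positivity, and then deduce simplicity via the zero-integral combination argument. The only cosmetic difference is that the paper applies \eqref{mu1} to $|u|$ directly (since $\int_\Omega m|u|^2\,dx=\int_\Omega mu^2\,dx$ and $\||u|\|_{W_\sigma(\Omega)}=\|u\|_{W_\sigma(\Omega)}$, so $|u|$ is itself a maximizer and hence an eigenfunction, whence Harnack gives $|u|>0$), whereas you split $u=u^+-u^-$ and argue for each part separately; the paper's route is marginally shorter but the content is the same.
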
  

\begin{proof} Let $u\in W_\sigma(\Omega)$ be an eigenfunction related to $\mu_1(m)$. 
By \eqref{mu1}, $|u|$ is again an eigenfunction.  
By Proposition  \ref{equivalenzalambdamu}, $|u|$ satisfies the equation $-\Delta |u|
=\mu_1(m)^{-1} m|u|$ and, by Harnack inequality        
(see \cite{GT}), we conclude that $|u|>0$ in $\Omega$; therefore $u$ is one signed in $\Omega$.\
Let $u,v$ be two  eigenfunctions; set $\alpha=\frac{\int_\Omega v\,dx}{\int_\Omega u\,dx}$, note that $\int_\Omega (\alpha u- v)\, dx=0$ and $\alpha u-v$ belongs to the eigenspace of $
\mu_1(m)$. If $\alpha u-v$ was not identically zero, then, by Harnack inequality, it would be 
one signed and hence $\int_\Omega (\alpha u- v)\, dx\neq0$, reaching a contradiction. Therefore $v=\alpha u$ and thus $\mu_1(m) $ is simple.  
\end{proof}    

As an immediate consequence of Proposition \ref{simple}, we have the 
following 
  
\begin{corollary} Let $m\in L^\infty(\Omega)$ such that $|\{m>0\}|>0$. Then $\lambda_1(m)$ is simple and any associated eigenfunction is one signed in $\Omega$.    
\end{corollary}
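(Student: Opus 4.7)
The plan is to derive the corollary directly from Proposition \ref{simple} by transferring the statement from the operator $G_m$ to the eigenvalue problem \eqref{p0} via Proposition \ref{equivalenzalambdamu}.

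First, I would observe that since $|\{m>0\}|>0$, Proposition \ref{segnorho} (ii) guarantees the existence of a sequence of positive eigenvalues $\mu_k(m)$ of $G_m$, with $\mu_1(m)$ being the largest. By Proposition \ref{equivalenzalambdamu}, the nonzero eigenvalues of \eqref{p0} are exactly the reciprocals of the nonzero eigenvalues of $G_m$, and the corresponding eigenspaces coincide. Since the positive sequence $\{\mu_k(m)\}$ is decreasing and the corresponding reciprocals $1/\mu_k(m)$ form the increasing sequence of positive eigenvalues of \eqref{p0}, the largest positive eigenvalue $\mu_1(m)$ of $G_m$ corresponds to the smallest positive eigenvalue $\lambda_1(m) = 1/\mu_1(m)$ of \eqref{p0}.

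Next, I would invoke Proposition \ref{simple}: under the hypothesis $|\{m>0\}|>0$, the eigenvalue $\mu_1(m)$ is simple and every associated eigenfunction is one-signed in $\Omega$. Because the eigenspaces coincide (Proposition \ref{equivalenzalambdamu}), the eigenspace of $\lambda_1(m)$ is precisely the eigenspace of $\mu_1(m)$. Hence $\lambda_1(m)$ is simple and any eigenfunction associated to it is one-signed in $\Omega$.

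There is no real obstacle here — the entire argument is a translation via Proposition \ref{equivalenzalambdamu}; the only subtlety to double-check is that the smallest positive $\lambda_k(m)$ corresponds to the largest positive $\mu_k(m)$, which is immediate from the fact that $t \mapsto 1/t$ reverses order on $(0,\infty)$ and from the monotonicity of the two sequences already recorded in the excerpt.
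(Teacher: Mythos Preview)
Your proposal is correct and matches the paper's approach: the paper simply states the corollary as ``an immediate consequence of Proposition \ref{simple}'', which is precisely the transfer via Proposition \ref{equivalenzalambdamu} that you spell out. The only content is that $\lambda_1(m)=1/\mu_1(m)$ with coinciding eigenspaces, and you have handled that correctly.
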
   
 
Throughout the paper we will denote by $u_m$ the unique positive eigenfunction of both $G_m$
(relative to $\mu_1(m)$)     
and problem \eqref{p0} (relative to $\lambda_1(m)$) , normalized by 
\begin{equation}\label{normaliz1}
\|u_m\|_{W_\sigma(\Omega)}= 1,
\end{equation}
which is equivalent to 
\begin{equation}\label{normaliz2}
\int_\Omega m u_m^2 \, dx= \mu_1(m)=\cfrac{1}{\lambda_1(m)}\,.
\end{equation}   
By standard regularity theory (see \cite{GT,Mik}), $u_m \in W^{2,2}_{\text{loc}}(\Omega)\cap C^{1, \beta}(\Omega)$
for all $0<\beta<1$.\\
As last comment, we observe that $\mu_1(m)$ is homogeneous of degree 1, i.e. 
\begin{equation}\label{homo}
\mu_1(\alpha m)= \alpha \mu_1(m) \quad \forall\, \alpha>0,
\end{equation}
this follows immediately from \eqref{mu1}.
\medskip

\section{Rearrangements of measurable functions}\label{rearrangements}
In this section we introduce the concept of
rearrangement of a measurable function and summarize some related results we will use in the next section. 
The idea of rearranging a function dates back to the book \cite{hardy52} 
of Hardy, Littlewood and P\'olya, since than many authors have investigated both extensions and 
applications of this notion. Here we relies on the results in \cite{Alvino1,B87,B89,day70,
Kaw,ryff67}.

Let $\Omega$ be an open bounded set of $\mathbb{R}^N$.

\begin{definition}
For every measurable 
function $f:\Omega\to\mathbb{R}$ the function $d_f:\mathbb{R}\to [0,|\Omega|]$ defined by
$$d_f(t)=|\{x\in\Omega: f(x)>t\}|$$
is called \emph{distribution function of $f$}.
\end{definition}

The symbol $\mu_f$ is also used. It is easy to prove the
following properties of $d_f$.

\begin{proposition}
For each $f$ the distribution function $d_f$ is decreasing, right continuous and
the following identities hold true
$$\lim_{t\to-\infty}d_f(t)=|\Omega|,\quad\quad \lim_{t\to\infty}d_f(t)=0.$$
\end{proposition}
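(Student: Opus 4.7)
The plan is to use the monotonicity and continuity properties of Lebesgue measure under monotone sequences of sets. All four claims reduce to choosing the right monotone sequence of super-level sets and invoking the appropriate continuity theorem.

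First I would dispatch monotonicity: if $s<t$ then $\{x\in\Omega:f(x)>t\}\subseteq\{x\in\Omega:f(x)>s\}$, hence $d_f(t)\le d_f(s)$. This is the easiest step and uses nothing beyond monotonicity of measure.

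Next, for right continuity at an arbitrary $t\in\mathbb{R}$, I would pick a decreasing sequence $t_n\searrow t$ and observe that the super-level sets satisfy
\[
\{f>t_n\}\nearrow\bigcup_{n\in\mathbb{N}}\{f>t_n\}=\{f>t\},
\]
the equality holding because any $x$ with $f(x)>t$ satisfies $f(x)>t_n$ for all sufficiently large $n$. Continuity of Lebesgue measure from below (which applies with no integrability assumption) yields $d_f(t_n)\to d_f(t)$. Since $d_f$ is monotone, convergence along this sequence upgrades to a one-sided limit: $\lim_{s\to t^+}d_f(s)=d_f(t)$.

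Finally, the two limits at $\pm\infty$ follow by the same recipe. For $t_n\to-\infty$, the sets $\{f>t_n\}$ increase to $\Omega$ (since $f$ is real-valued on $\Omega$), so continuity from below gives $d_f(t_n)\to|\Omega|$. For $t_n\to+\infty$ the sets $\{f>t_n\}$ decrease to $\emptyset$, and because $\Omega$ has finite measure we may apply continuity from above to conclude $d_f(t_n)\to 0$; the finiteness of $|\Omega|$ is essential here and is the only place where the boundedness of $\Omega$ enters. There is no genuine obstacle: the only subtlety is remembering to invoke continuity from above (not below) for the $+\infty$ limit and to use the finiteness of $|\Omega|$ to legitimize it.
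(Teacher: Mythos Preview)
Your proof is correct and entirely standard. The paper does not actually supply a proof of this proposition---it merely remarks beforehand that these properties are ``easy to prove''---so there is nothing to compare against; your argument via monotone continuity of Lebesgue measure (from below for right continuity and the limit at $-\infty$, from above using $|\Omega|<\infty$ for the limit at $+\infty$) is exactly the elementary verification the authors are leaving to the reader.
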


\begin{definition}
Two measurable functions $f,g:\Omega \to \mathbb{R}$ are called \emph{equimeasurable} functions 
or \emph{rearrangements} of one another if one of the following equivalent conditions is satisfied

i)   $|\{x\in \Omega: f(x)>t\}|=|\{x\in \Omega: g(x)>t\}| \quad \forall\, t\in \mathbb{R}$;

ii)  $d_f=d_g$.
\end{definition}

Equimeasurability of $f$ and $g$ is denoted by $f\sim g$. Equimeasurable functions
share global extrema and integrals as it is stated precisely by the following proposition.
\begin{proposition}\label{rospo}
Suppose $f\sim g$ and let $F:\mathbb{R}\to\mathbb{R}$ be a Borel measurable function, then
 
i) $|f|\sim |g|$;

ii) $\esssup f=\esssup g$ and  $\essinf f=\essinf g$;

iii) $F\circ f\sim F\circ g$;

iv) $F\circ f\in L^1(\Omega)$ implies $F\circ g\in L^1(\Omega)$ and $\int_\Omega F\circ f\,dx=
\int_\Omega F\circ g\,dx$.

\end{proposition}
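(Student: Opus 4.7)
The plan is to reduce all four parts to the single observation that $d_f=d_g$ determines the pushforward (image) measure of Lebesgue measure on $\Omega$ under the map into $\mathbb{R}$. Concretely, I would define the finite Borel measures $\mu_f(B):=|f^{-1}(B)|$ and $\mu_g(B):=|g^{-1}(B)|$ on $\mathbb{R}$. These agree on the $\pi$-system of half-lines, since $\mu_f((t,+\infty))=d_f(t)=d_g(t)=\mu_g((t,+\infty))$ for every $t\in\mathbb{R}$, and both have total mass $|\Omega|$. By the standard uniqueness theorem for finite Borel measures agreeing on a generating $\pi$-system, $\mu_f=\mu_g$ on the Borel $\sigma$-algebra of $\mathbb{R}$.

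Granted this, I would prove (iii) by writing, for every $t\in\mathbb{R}$,
\[
d_{F\circ f}(t)=\bigl|f^{-1}\bigl(F^{-1}((t,+\infty))\bigr)\bigr|=\mu_f\bigl(F^{-1}((t,+\infty))\bigr)=\mu_g\bigl(F^{-1}((t,+\infty))\bigr)=d_{F\circ g}(t),
\]
using that $F^{-1}((t,+\infty))$ is Borel when $F$ is Borel measurable. Part (i) is then the special case $F(s)=|s|$. For (ii), I would use the characterizations $\esssup f=\inf\{t:d_f(t)=0\}$ (with $\inf\emptyset=+\infty$) and $\essinf f=\sup\{t:d_f(t)=|\Omega|\}$, both of which depend only on $d_f$ and so coincide for $f$ and $g$.

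For (iv), I would invoke the layer cake representation: for any measurable $h:\Omega\to\mathbb{R}$,
\[
\int_\Omega h^+\,dx=\int_0^{+\infty}d_h(t)\,dt,\qquad \int_\Omega h^-\,dx=\int_0^{+\infty}\bigl(|\Omega|-d_h(-t)^{-}\bigr)\,dt,
\]
where $d_h(-t)^-$ denotes the left limit (so this measures $|\{h<-t\}|$). Applying this to $h=F\circ f$ and $h=F\circ g$, together with (iii), shows that both $(F\circ f)^+$ and $(F\circ f)^-$ have the same integrals as the corresponding positive and negative parts of $F\circ g$; hence integrability transfers and the integrals coincide.

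The only non-routine step is the measure-uniqueness argument behind (iii); everything else is a direct application of it. I expect no real obstacles, since the uniqueness of a finite Borel measure from its values on half-lines is entirely standard (Dynkin's $\pi$--$\lambda$ theorem, or equivalently the fact that a right-continuous decreasing function on $\mathbb{R}$ with prescribed limits at $\pm\infty$ determines a unique Borel measure). The conceptual point worth highlighting is simply that equimeasurability is a statement about pushforward measures, after which (i), (ii), and (iv) become formal consequences.
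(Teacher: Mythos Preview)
Your argument is correct. The paper does not supply its own proof of this proposition; it simply refers the reader to Day's thesis and to Burton (1989), so there is no in-paper argument to compare against. Your pushforward-measure approach---showing that $d_f=d_g$ forces the image measures $\mu_f=|f^{-1}(\cdot)|$ and $\mu_g=|g^{-1}(\cdot)|$ to agree on all Borel sets via the $\pi$--$\lambda$ theorem, and then reading off (i)--(iv)---is precisely the standard route one finds in those references.

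One minor stylistic suggestion for (iv): once you have $\mu_f=\mu_g$ on Borel sets, the change-of-variables formula for pushforward measures gives directly
\[
\int_\Omega F\circ f\,dx=\int_{\mathbb{R}} F\,d\mu_f=\int_{\mathbb{R}} F\,d\mu_g=\int_\Omega F\circ g\,dx,
\]
applied first with $|F|$ in place of $F$ to transfer integrability and then with $F$ itself; this sidesteps the slightly awkward layer-cake expression for the negative part.
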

For a proof see, for example, \cite[Proposition 3.3]{day70} or \cite[Lemma 2.1]{B89}.

In particular, for each $1\leq p\leq\infty$, if $f\in L^p(\Omega)$ and $f\sim g$ then
$g\in L^p(\Omega)$ and 
\begin{equation*}
 \|f\|_{L^p(\Omega)}=\|g\|_{L^p(\Omega)}.
\end{equation*}

\begin{definition}\label{riord}
For every measurable function $f:\Omega\to\mathbb{R}$ the function $f^*:(0,|\Omega|)\to
\mathbb{R}$ defined by
$$f^*(s)=\sup\{t\in\mathbb{R}: d_f(t)>s\}$$
is called \emph{decreasing rearrangement of $f$}.
\end{definition}
An equivalent definition (used by some authors) is $f^*(s)=\inf\{t\in\mathbb{R}: d_f(t)\leq
s\}$. 

\begin{proposition} \label{furbi}
For each $f$ its decreasing rearrangement $f^*$ is decreasing, right continuous and
we have 
$$\lim_{s\to 0}f^*(s)=\esssup f\quad\text{and}\quad\lim_{s\to|\Omega|}f^*(s)=\essinf f.$$
Moreover, if $F:\mathbb{R}\to\mathbb{R}$ is a Borel measurable function then
$F\circ f\in L^1(\Omega)$ implies $F\circ f^*\in L^1(0,|\Omega|)$ and 
$$\int_\Omega F\circ f\,dx=\int_0^{|\Omega|} F\circ f^*\,ds.$$ 
Finally, $|\{x\in\Omega: f(x)>t\}|=|\{x\in\Omega: f^*(x)>t\}|$ for all $t\in\mathbb{R}$,
$d_{f^*}=d_f$ and, for each measurable function $g$ we have $f\sim g$ if and only if $f^*=g^*$.
\end{proposition}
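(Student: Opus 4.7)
The plan is to treat the five claims in an order that makes each one a short consequence of what precedes it, reducing everything to the definition $f^*(s)=\sup\{t:d_f(t)>s\}$ and to properties of $d_f$ already proved.

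First I would verify that $f^*$ is decreasing: for $s_1<s_2$ the set $\{t:d_f(t)>s_2\}$ is contained in $\{t:d_f(t)>s_1\}$, hence $f^*(s_2)\leq f^*(s_1)$. For right continuity, given $s_n\downarrow s$, monotonicity gives $\lim_n f^*(s_n)\leq f^*(s)$, while for any $t<f^*(s)$ there exists $r>t$ with $d_f(r)>s$; since $s_n\to s$, one has $d_f(r)>s_n$ eventually, hence $f^*(s_n)\geq r>t$, so $\lim_n f^*(s_n)\geq t$ and equality follows.

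The crucial step is the superlevel-set identity
\[
\{s\in(0,|\Omega|):f^*(s)>t\}=(0,d_f(t))
\]
for every $t\in\mathbb{R}$. Indeed, $f^*(s)>t$ is equivalent to the existence of some $r>t$ with $d_f(r)>s$, which by right continuity of $d_f$ (established in the previous proposition) is equivalent to $s<\lim_{r\to t^+}d_f(r)=d_f(t)$. This identity immediately gives $d_{f^*}=d_f$ and the corresponding pointwise statement $|\{f>t\}|=|\{f^*>t\}|$, i.e.\ $f\sim f^*$. With equimeasurability in hand, Proposition \ref{rospo}(ii) yields $\esssup f^*=\esssup f$ and $\essinf f^*=\essinf f$, and since $f^*$ is decreasing and right continuous on $(0,|\Omega|)$ these essential extrema coincide with $\lim_{s\to 0^+}f^*(s)$ and $\lim_{s\to|\Omega|^-}f^*(s)$ respectively, giving the endpoint formulas. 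Proposition \ref{rospo}(iv) applied to the equimeasurable pair $(f,f^*)$ then delivers the integral identity $\int_\Omega F\circ f\,dx=\int_0^{|\Omega|}F\circ f^*\,ds$.

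Finally, for the equivalence at the end, if $f\sim g$ then $d_f=d_g$, and $f^*=g^*$ follows immediately from the very definition of the decreasing rearrangement; conversely, $f^*=g^*$ forces $d_f=d_{f^*}=d_{g^*}=d_g$ by the identity $d_{f^*}=d_f$ just proved, so $f\sim g$. The main obstacle is the superlevel-set identity $\{f^*>t\}=(0,d_f(t))$: it is the one nontrivial ingredient and the place where the right continuity of $d_f$ is essential; once it is in place, every remaining claim is either a direct rewriting or an appeal to Proposition \ref{rospo}.
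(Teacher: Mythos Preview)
The paper does not give a proof of this proposition; it only remarks that ``some of the previous claims are simple consequences of the definition of $f^*$, for more details see \cite[Chapter~2]{day70}.'' Your argument is a correct and self-contained proof along the natural lines. The superlevel-set identity $\{s:f^*(s)>t\}=(0,d_f(t))$ is exactly the right hinge: once it is established (and your use of the right continuity of $d_f$ for the backward inclusion is the key point), the equalities $d_{f^*}=d_f$ and $|\{f>t\}|=|\{f^*>t\}|$ are immediate, and the remaining claims follow by quoting Proposition~\ref{rospo}.

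One small technical remark. In the paper, equimeasurability and Proposition~\ref{rospo} are stated for two functions defined on the \emph{same} domain $\Omega$, whereas you are applying them to $f$ on $\Omega$ and $f^*$ on $(0,|\Omega|)$. This is harmless---the proofs of Proposition~\ref{rospo} referenced in \cite{day70,B89} use only the distribution function, and $d_f=d_{f^*}$ is precisely what you have just shown---but it deserves a sentence acknowledging the change of underlying measure space. Alternatively, the endpoint limits can be read off directly from the definition: $\lim_{s\to 0^+}f^*(s)=\sup\{t:d_f(t)>0\}=\esssup f$ and $\lim_{s\to|\Omega|^-}f^*(s)=\sup\{t:d_f(t)=|\Omega|\}=\essinf f$, avoiding the appeal to Proposition~\ref{rospo}(ii) altogether.
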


Some of the previous claims are simple consequences of the definition of $f^*$, for
more details see \cite[Chapter 2]{day70}. 

As before, it follows that, for each $1\leq p\leq\infty$, if $f\in L^p(\Omega)$ then
$f^*\in L^p(0,|\Omega|)$ and $\|f\|_{L^p(\Omega)}=\|f^*\|_{L^p(0,|\Omega|)}.$
 
\begin{definition}\label{prec1}
Given two functions $f,g\in L^1(\Omega)$, we write $g\prec f$ if
$$\int_0^t g^*\,ds\leq \int_0^t f^*\,ds\quad\forall\; 0\leq t\leq|\Omega|\quad\quad
{and}\quad\quad\int_0^{|\Omega|} g^*\,ds= \int_0^{|\Omega|} f^*\,ds.$$ 
\end{definition}
Note that $g\sim f$ if and only if $g\prec f$ and $f\prec g$. Among many properties of the relation
$\prec$ we mention the following (a proof is in \cite[Lemma 8.2]{day70}) proposition.

\begin{proposition}\label{prec}
For any pair of functions $f,g\in L^1(\Omega)$ and real numbers $\alpha$ and $\beta$,
if $\alpha\leq f\leq\beta$ a.e. in $\Omega$ and $g\prec f$, then $\alpha\leq g\leq\beta$ a.e. in $\Omega$.
\end{proposition}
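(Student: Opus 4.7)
The plan is to transfer the a.e.\ bounds on $f$ to pointwise bounds on $f^*$, push these bounds onto $g^*$ using the defining inequalities of $\prec$, and then translate back to an a.e.\ statement about $g$. The only real content of the argument lies in two contradiction steps, one for each bound.

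First, by Proposition \ref{furbi}, the hypothesis $\alpha\leq f\leq \beta$ a.e.\ in $\Omega$ is equivalent to $\alpha\leq f^*(s)\leq \beta$ for every $s\in(0,|\Omega|)$, since the decreasing rearrangement preserves essential supremum and infimum. Analogously, showing $\alpha\leq g\leq \beta$ a.e.\ is equivalent to showing $\alpha\leq g^*(s)\leq \beta$ for all $s\in(0,|\Omega|)$. So the task reduces to transferring the bounds from $f^*$ to $g^*$.

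For the upper bound, I would argue by contradiction: if $g^*(s_0)>\beta$ for some $s_0\in(0,|\Omega|)$, then, because $g^*$ is decreasing, $g^*(s)\geq g^*(s_0)>\beta$ for all $s\in(0,s_0)$, hence
$$\int_0^{s_0} g^*\,ds \;>\; s_0\beta \;\geq\; \int_0^{s_0} f^*\,ds,$$
contradicting the inequality in Definition \ref{prec1}. For the lower bound, assume $g^*(s_0)<\alpha$ for some $s_0\in(0,|\Omega|)$. Subtracting the inequality $\int_0^{s_0}g^*\,ds\leq \int_0^{s_0}f^*\,ds$ from the equality $\int_0^{|\Omega|}g^*\,ds=\int_0^{|\Omega|}f^*\,ds$ yields
$$\int_{s_0}^{|\Omega|} g^*\,ds \;\geq\; \int_{s_0}^{|\Omega|} f^*\,ds \;\geq\; \alpha(|\Omega|-s_0),$$
where the last inequality uses $f^*\geq \alpha$. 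On the other hand, monotonicity of $g^*$ gives $g^*(s)\leq g^*(s_0)<\alpha$ on $(s_0,|\Omega|)$, so $\int_{s_0}^{|\Omega|}g^*\,ds<\alpha(|\Omega|-s_0)$, a contradiction.

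There is no serious obstacle here; the argument is symmetric in structure but uses the two different ingredients of $\prec$ (the inequality for the upper bound, the equality combined with the inequality for the lower bound). The only point requiring minor care is verifying that the strict inequalities are genuinely strict, which follows immediately from the monotonicity of $g^*$ together with $s_0\in(0,|\Omega|)$ so that the intervals $(0,s_0)$ and $(s_0,|\Omega|)$ both have positive measure.
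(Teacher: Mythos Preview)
Your proof is correct. The paper does not actually supply its own proof of this proposition; it simply cites \cite[Lemma 8.2]{day70} and states the result. Your argument is a clean, self-contained verification using only the monotonicity of decreasing rearrangements and the two defining conditions of $\prec$, so it fills in what the paper leaves to the reference.
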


\begin{proposition}\label{prec2}
For $f\in L^1(\Omega)$ let $g=\frac{1}{|\Omega|}\int_\Omega f \, dx$. Then we have 
$g\prec f$.
\end{proposition}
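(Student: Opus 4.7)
The statement says the constant function with value equal to the mean of $f$ is dominated by $f$ in the $\prec$ ordering. The plan is to unfold the definition of $\prec$ and exploit that the decreasing rearrangement of a constant is just that constant, together with monotonicity of $f^*$.

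First I would set $c=\frac{1}{|\Omega|}\int_\Omega f\,dx$, so $g\equiv c$. Since $g$ is constant, its decreasing rearrangement satisfies $g^*(s)=c$ for every $s\in(0,|\Omega|)$, hence $\int_0^t g^*\,ds=ct$. By Proposition~\ref{furbi} applied to $F(x)=x$ (or directly by the general integral identity) we have $\int_\Omega f\,dx=\int_0^{|\Omega|} f^*\,ds$, so $c=\frac{1}{|\Omega|}\int_0^{|\Omega|}f^*\,ds$. This immediately gives the equality in Definition~\ref{prec1}: $\int_0^{|\Omega|}g^*\,ds=c|\Omega|=\int_0^{|\Omega|}f^*\,ds$.

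For the inequality, I have to verify
\[
\frac{t}{|\Omega|}\int_0^{|\Omega|}f^*\,ds\le \int_0^t f^*\,ds\qquad\forall\, 0\le t\le|\Omega|.
\]
At $t=0$ this is trivial and at $t=|\Omega|$ it is an equality. For $0<t<|\Omega|$, split $\int_0^{|\Omega|}f^*\,ds=\int_0^t f^*\,ds+\int_t^{|\Omega|}f^*\,ds$, so the inequality is equivalent to
\[
t\int_t^{|\Omega|}f^*\,ds\le (|\Omega|-t)\int_0^t f^*\,ds.
\]
Here I use that $f^*$ is decreasing (Proposition~\ref{furbi}): for a.e.\ $s\in(0,t)$ and $s'\in(t,|\Omega|)$ we have $f^*(s)\ge f^*(t)\ge f^*(s')$, so $\int_0^t f^*\,ds\ge t\,f^*(t)$ and $\int_t^{|\Omega|}f^*\,ds\le (|\Omega|-t)\,f^*(t)$. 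Multiplying the former by $(|\Omega|-t)$ and the latter by $t$ and comparing yields the claimed inequality.

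There is no real obstacle here; the statement is essentially ``the average of a decreasing function over an initial segment exceeds the global average,'' combined with the bookkeeping of Definition~\ref{prec1}. The only thing to be careful about is the conversion $\int_\Omega f\,dx=\int_0^{|\Omega|}f^*\,ds$, which is furnished by Proposition~\ref{furbi}.
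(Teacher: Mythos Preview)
Your proof is correct. The paper itself does not supply a proof of this proposition; it is stated without proof among the standard facts on rearrangements collected in Section~\ref{rearrangements}. Your direct verification---computing $g^*\equiv c$, invoking Proposition~\ref{furbi} for the equality of integrals, and then using the monotonicity of $f^*$ to show that the average over $(0,t)$ dominates the global average---is exactly the elementary argument one would expect, and fills the gap cleanly.
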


\begin{definition}\label{class} 
Let $f:\Omega\to\mathbb{R}$ a measurable function. We call the set
$$\mathcal{G}(f)=\{g:\Omega\to\mathbb{R}: g \text{ is measurable and } g\sim f \}$$
\emph{class of rearrangements of $f$} or \emph{set of rearrangements of $f$}.
\end{definition}

Note that, for $1\leq p\leq\infty$, if $f$ is in $L^p(\Omega)$ then $\mathcal{G}(f)$ is also
contained in $L^p(\Omega)$.

As we will see in the next section, we are interested in the optimization of a functional 
defined on a class of rearrangements $\mathcal{G}(m_0)$,
where $m_0$ belongs to $L^\infty(\Omega)$. For this reason, although almost all of what follows
holds in a much more general context, hereafter we restrict our attention to classes of rearrangement of
functions in $L^\infty(\Omega)$.
We need compactness properties of the set $\mathcal{G}(m_0)$, with a little
effort it can be showed that this set is closed but in general it is not compact in the norm topology
of $L^\infty(\Omega)$. Therefore we focus our attention on the weak* compactness.
By $\overline{\mathcal{G}(m_0)}$ we denote the closure of $\mathcal{G}(m_0)$ in the weak* topology
of $L^\infty(\Omega)$.

\begin{proposition}\label{cane}
 Let $m_0$ be a function of $L^\infty(\Omega)$. Then $\overline{\mathcal{G}(m_0)}$ is
 
 i)  weakly* compact;
 
 ii) metrizable in the weak* topology;
 
 iii) sequentially weakly* compact. 
\end{proposition}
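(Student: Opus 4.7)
The plan is to verify the three claims using standard duality facts, the key observation being that every element of $\mathcal{G}(m_0)$ has the same $L^\infty$ norm as $m_0$. Indeed, by Proposition \ref{rospo}(ii), for every $g\in \mathcal{G}(m_0)$ we have $\|g\|_{L^\infty(\Omega)}=\|m_0\|_{L^\infty(\Omega)}$, so $\mathcal{G}(m_0)$ is contained in the closed ball
$$B=\{m\in L^\infty(\Omega):\|m\|_{L^\infty(\Omega)}\leq \|m_0\|_{L^\infty(\Omega)}\}.$$
Since $B$ is weak* closed, the weak* closure $\overline{\mathcal{G}(m_0)}$ is also contained in $B$.

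For (i), I would invoke the Banach--Alaoglu theorem. Because $\Omega$ is bounded (in particular $\sigma$-finite) one has $L^\infty(\Omega)=(L^1(\Omega))^*$, so $B$ is weak* compact. As $\overline{\mathcal{G}(m_0)}$ is a weak* closed subset of $B$, it inherits weak* compactness.

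For (ii), the strategy is to recall that the predual $L^1(\Omega)$ is separable (since $\Omega$ is a bounded open set in $\mathbb{R}^N$), and that on any norm-bounded subset of the dual of a separable Banach space, the weak* topology is metrizable. Concretely, fixing a countable dense sequence $\{\varphi_n\}\subset L^1(\Omega)$, one can exhibit a compatible metric on $B$ of the form
$$d(m,m')=\sum_{n=1}^\infty \frac{1}{2^n\bigl(1+\|\varphi_n\|_{L^1(\Omega)}\bigr)}\,\left|\int_\Omega (m-m')\varphi_n\,dx\right|,$$
and check that it induces the weak* topology on $B$; restricting $d$ to $\overline{\mathcal{G}(m_0)}$ yields a metric that induces the relative weak* topology.

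For (iii), I would simply combine (i) and (ii): in a metrizable topological space, compactness is equivalent to sequential compactness, so $\overline{\mathcal{G}(m_0)}$ is sequentially weakly* compact. No real obstacle arises here; the only delicate step is the metrization statement in (ii), but this is a standard consequence of the separability of $L^1(\Omega)$ and can be referenced from any functional analysis textbook (e.g.\ Brezis).
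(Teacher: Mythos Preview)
Your argument is correct and is exactly the standard route: bound $\mathcal{G}(m_0)$ in the $L^\infty$-ball via Proposition~\ref{rospo}(ii), apply Banach--Alaoglu for (i), use separability of $L^1(\Omega)$ to metrize the weak* topology on that ball for (ii), and combine the two for (iii). The paper itself does not give a proof but simply refers to \cite[Proposition~3.6]{ACF}; your write-up is precisely the argument one expects to find there, so there is no discrepancy to discuss.
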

For the proof see \cite[Proposition 3.6]{ACF}. 

Moreover, the sets $\mathcal{G}(m_0)$ and $\overline{\mathcal{G}(m_0)}$ have further
properties. 

\begin{definition}\label{libellula}
Let $C$ be a convex set of a real vector space. An element $v$ in $C$ is said an \emph{extreme point of
$C$} if for every $u$ and $w$ in $C$ the identity $v=\frac{u+w}{2}$ implies $u=w$.
\end{definition}
A vertex of a convex polygon is an example of extreme point.

\begin{proposition}\label{convexity}
Let $m_0$ be a function of $L^\infty(\Omega)$, then

i) $\overline{\mathcal{G}(m_0)}=\{f\in L^\infty(\Omega): f\prec m_0\}$;

ii) $\overline{\mathcal{G}(m_0)}$ is convex;

iii) $\mathcal{G}(m_0)$ is the set of the extreme points of $\overline{\mathcal{G}(m_0)}$.
\end{proposition}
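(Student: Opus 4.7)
The plan is to establish (i) first, derive (ii) directly from it, and then prove the two inclusions of (iii).

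For (i), I would set $K=\{f\in L^\infty(\Omega):f\prec m_0\}$ and verify both inclusions. The inclusion $\mathcal{G}(m_0)\subseteq K$ is immediate, since $f\sim m_0$ implies $f^*=m_0^*$, hence $f\prec m_0$ (cf.\ Proposition \ref{furbi}). To obtain $\overline{\mathcal{G}(m_0)}\subseteq K$, I would show that $K$ is weak* closed by using the equivalent reformulation that $f\prec m_0$ iff $\int_\Omega f\,dx=\int_\Omega m_0\,dx$ and $\int_\Omega (f-t)^+\,dx\leq\int_\Omega(m_0-t)^+\,dx$ for every $t\in\mathbb{R}$. The first identity is weak* continuous; the second defines a weak* closed set, because for every convex $\phi$ the functional $f\mapsto\int_\Omega\phi(f)\,dx$ is weak* lower semicontinuous on bounded subsets of $L^\infty(\Omega)$ (a standard consequence of Mazur's lemma combined with Fatou's lemma). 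The reverse inclusion $K\subseteq\overline{\mathcal{G}(m_0)}$ requires an approximation argument: given $f\in K$, I would partition $\Omega$ into a finer and finer mesh $\{\Omega_i^n\}$, and on each $\Omega_i^n$ assemble $g_n$ as a rearrangement of a suitable portion of $m_0$ so that the local average of $g_n$ matches that of $f$; the slack guaranteed by $f\prec m_0$ makes this construction possible and yields $g_n\rightharpoonup^* f$.

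Part (ii) follows immediately from (i): if $f_1,f_2\prec m_0$ and $\lambda\in[0,1]$, integrating the pointwise convexity inequality
\[
(\lambda f_1+(1-\lambda)f_2-t)^+\leq\lambda(f_1-t)^++(1-\lambda)(f_2-t)^+
\]
yields $\lambda f_1+(1-\lambda)f_2\prec m_0$, since the mean values are preserved trivially.

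For (iii), I would prove each inclusion separately. For $\mathcal{G}(m_0)\subseteq$ extreme points, suppose $g\in\mathcal{G}(m_0)$ satisfies $g=(f_1+f_2)/2$ with $f_1,f_2\in\overline{\mathcal{G}(m_0)}$. Choose a strictly convex function $\phi$, e.g.\ $\phi(s)=s^2$. By Proposition \ref{rospo} (iv), equimeasurability yields $\int_\Omega\phi(g)\,dx=\int_\Omega\phi(m_0)\,dx$, while the lower semicontinuity argument of part (i) applied to $\phi$ gives $\int_\Omega\phi(f_i)\,dx\leq\int_\Omega\phi(m_0)\,dx$ for $i=1,2$. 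The chain
\[
\int_\Omega\phi(g)\,dx\leq\int_\Omega\frac{\phi(f_1)+\phi(f_2)}{2}\,dx\leq\int_\Omega\phi(m_0)\,dx=\int_\Omega\phi(g)\,dx
\]
forces equality throughout and, by strict convexity of $\phi$, this gives $f_1=f_2$ a.e.

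The main obstacle lies in the converse: every extreme point of $\overline{\mathcal{G}(m_0)}$ belongs to $\mathcal{G}(m_0)$. I would argue by contrapositive: if $f\prec m_0$ and $f\not\sim m_0$, then there exists $t_0\in(0,|\Omega|)$ with $\int_0^{t_0}f^*\,ds<\int_0^{t_0}m_0^*\,ds$, providing strict slack in the majorization. Exploiting this slack, I would construct a nonzero $\eta\in L^\infty(\Omega)$ supported on two disjoint subsets of $\Omega$ of positive measure, with $\int_\Omega\eta\,dx=0$ and $\|\eta\|_{L^\infty(\Omega)}$ small enough so that both $f\pm\eta$ still satisfy $\int_0^t(f\pm\eta)^*\,ds\leq\int_0^t m_0^*\,ds$ for every $t$; then $f=\frac12(f+\eta)+\frac12(f-\eta)$ exhibits $f$ as a non-trivial convex combination of elements of $\overline{\mathcal{G}(m_0)}$, contradicting extremality. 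Designing $\eta$ so as to preserve the majorization on every sublevel set simultaneously, while remaining genuinely nonzero, is the technical heart of the argument.
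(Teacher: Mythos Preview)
Your proposal is essentially correct and self-contained, whereas the paper does not actually prove this proposition: it simply cites four theorems from Day's thesis \cite{day70} (Theorems 22.13, 22.2, 17.4, 20.3). So there is nothing to compare at the level of argument; you are reconstructing the classical Ryff--Day theory that the paper invokes as a black box.

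A few comments on your outline. Your treatment of (ii) and of the inclusion $\mathcal{G}(m_0)\subseteq\{\text{extreme points}\}$ in (iii) via a strictly convex integrand is clean and complete. For (i), the inclusion $\overline{\mathcal{G}(m_0)}\subseteq K$ is fine as written; the reverse inclusion $K\subseteq\overline{\mathcal{G}(m_0)}$ is where the real work lies, and your mesh-and-local-average sketch, while pointing in the right direction, would need to be made precise (the usual route passes through simple functions and a continuous analogue of Birkhoff's theorem on doubly stochastic matrices). For the converse in (iii), your contrapositive strategy is the standard one due to Ryff \cite{ryff67}; as you yourself note, the construction of the perturbation $\eta$ so that $(f\pm\eta)^*$ still lies below $m_0^*$ at every level is delicate. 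One typically distinguishes two cases: either there is an interval on which $f^*$ is constant while $m_0^*$ is not (and one perturbs on the corresponding level set of $f$), or $\int_0^{t}f^*\,ds<\int_0^{t}m_0^*\,ds$ strictly on an open interval of $t$'s (and one transfers mass between two suitably chosen sets). Your sketch does not yet separate these cases, but you correctly identify this step as the technical heart.

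In short: the paper defers entirely to the literature, while you supply the substance of the proof; your plan is sound, with the two approximation/perturbation constructions in (i) and (iii) being the places that would require genuine elaboration in a full write-up.
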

\begin{proof}
The claims follow from \cite[Theorems 22.13, 22.2, 17.4, 20.3]{day70}.  
\end{proof}

An evident consequence of the previous theorem is that $\overline{\mathcal{G}(m_0)}$ is
the weakly* closed convex hull of $\mathcal{G}(m_0)$.
\begin{corollary}\label{minore}

 Let $m_0\in L^\infty(\Omega)$ and $m,q\in\overline{\mathcal{G}(m_0)}$. Then\\
 i) $\int_\Omega m\,dx=\int_\Omega m_0\,dx$;\\
 ii) assuming $\int_\Omega m_0\,dx\neq 0$, $m=q$ if and only if $m$ and $q$ are linearly dependent.
\end{corollary}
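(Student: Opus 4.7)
The plan is to derive both parts directly from the characterization of $\overline{\mathcal{G}(m_0)}$ given in Proposition \ref{convexity}(i), together with the definition of the order relation $\prec$.

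For part i), I would start from the fact that $m \in \overline{\mathcal{G}(m_0)}$ means $m \prec m_0$ (Proposition \ref{convexity}(i)). By Definition \ref{prec1} this gives the equality $\int_0^{|\Omega|} m^* \, ds = \int_0^{|\Omega|} m_0^* \, ds$. Applying Proposition \ref{furbi} with $F$ the identity (which lies in $L^1$ since $m, m_0 \in L^\infty(\Omega)$ and $\Omega$ is bounded) converts these integrals of the decreasing rearrangements into integrals over $\Omega$, yielding $\int_\Omega m \, dx = \int_\Omega m_0 \, dx$. The same argument applies to $q$.

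For part ii), the implication $m = q \Rightarrow$ linear dependence is immediate. For the converse, assume $m$ and $q$ are linearly dependent, so there exist $\alpha, \beta \in \mathbb{R}$, not both zero, with $\alpha m + \beta q = 0$. I would first rule out the degenerate cases: by part i), $\int_\Omega m \, dx = \int_\Omega q \, dx = \int_\Omega m_0 \, dx \neq 0$, so neither $m$ nor $q$ can vanish identically. Hence we may write $q = c\, m$ for some $c \in \mathbb{R}$. Integrating over $\Omega$ and using part i) once more gives $\int_\Omega m_0 \, dx = c \int_\Omega m_0 \, dx$, and since $\int_\Omega m_0 \, dx \neq 0$ we conclude $c = 1$, hence $m = q$.

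There is no real obstacle here, as both parts follow mechanically once the characterization $\overline{\mathcal{G}(m_0)} = \{f : f \prec m_0\}$ and the integral-preserving property of $\prec$ are invoked. The only minor care needed is in part ii), where one must observe that the hypothesis $\int_\Omega m_0 \, dx \neq 0$ forces both $m$ and $q$ to be nonzero, so that the linear dependence can be written as a genuine proportionality $q = c\, m$ before equating integrals to pin down $c = 1$.
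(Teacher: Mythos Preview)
Your proposal is correct and follows essentially the same route as the paper: part i) is derived from Proposition \ref{convexity}(i), Definition \ref{prec1}, and Proposition \ref{furbi} with $F$ the identity, and part ii) proceeds by writing the linear dependence as a proportionality and integrating over $\Omega$ to force the constant to equal $1$. Your argument is in fact slightly more careful than the paper's, which simply asserts ``without loss of generality $m=\alpha q$'' without explicitly noting that the hypothesis $\int_\Omega m_0\,dx\neq 0$ is what rules out $m\equiv 0$ or $q\equiv 0$.
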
    
\begin{proof}
i) It follows immediately by i) of Proposition \ref{convexity}, Definition \ref{prec1} and Proposition 
 \ref{furbi} with $F$ equal to the identity function.\\
 ii) If $m$ and $q$ are linearly dependent, then, without loss of generality we can assume that
 $m=\alpha q$, for some $\alpha\in\mathbb{R}$. Integrating over $\Omega$ and using i)
 we find $m=q$.        
\end{proof}       
 Part i) of the following proposition  as been proved in \cite{B87} in a general measure 
 space, here we give an alternative proof. Moreover, part ii)  extends in some sense i) to the 
 case of sign changing functions.
\begin{proposition}\label{burtgen}
 Let $m_0\in L^\infty(\Omega)$ such that $\int_\Omega m_0\,dx>0$. Then\\
 i) if $m_0\geq0$ in $\Omega$, we have 
 \begin{equation}\label{uno}
 |\{x\in\Omega: m_0(x)>0\}|\leq|\{x\in\Omega: m (x)>0\}|\quad\forall\,m\in\overline{\mathcal{G}(m_0)}; 
 \end{equation}
 ii)
 if $|\{x\in\Omega: m_0(x)<0\}|>0$, we choose $\gamma\in (0,|\Omega|)$ such that $\int_\gamma^{|\Omega|}m_0^*
 \,ds=0$. Then, there exists a subset $E\subset\Omega$ of measure $\gamma$ such that, if $\overline{m}_0$ is
 the weight defined by 
\begin{equation}\label{barm0}
\overline{m}_0(x)=
\begin{cases}
m_0(x)\quad &\text{if } x\in E\\
 0 &\text{if }  x\in \Omega\smallsetminus E,
\end{cases}  
\end{equation}  
we have
\begin{equation}\label{mostar}
\overline{m}_0^*(s)=
\begin{cases}
m_0^*(s)\quad &\text{if } 0<s<\gamma\\
 0 &\text{if }  \gamma\leq s< |\Omega|
\end{cases}  
\end{equation}
and
\begin{equation}\label{due}
|\{x\in\Omega: \overline{m}_0(x)>0\}|\leq|\{x\in\Omega: m (x)>0\}|\quad\forall\,m\in\overline{\mathcal{G}
(m_0)},\,
m\geq 0.
\end{equation}

\end{proposition}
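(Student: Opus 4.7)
The plan is to treat part i) as a quick consequence of the order relation $\prec$ and to build part ii) around an explicit construction of $E$ followed by the same kind of rearrangement argument.

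For part i), Proposition \ref{convexity} gives $m\prec m_0$, and Proposition \ref{prec} (applied with lower bound $0$ and upper bound $\|m_0\|_{L^\infty(\Omega)}$) yields $m\geq 0$ a.e. Set $\alpha=|\{m_0>0\}|$ and $\delta=|\{m>0\}|$. From Proposition \ref{furbi} it follows that $m_0^*>0$ on $(0,\alpha)$ and $m_0^*\equiv 0$ on $[\alpha,|\Omega|)$, and similarly $m^*\equiv 0$ on $[\delta,|\Omega|)$. The equality of total integrals in Definition \ref{prec1} collapses to $\int_0^\delta m^*\,ds=\int_0^\alpha m_0^*\,ds$; combining this with the pointwise inequality $\int_0^\delta m^*\,ds\leq \int_0^\delta m_0^*\,ds$ and the strict positivity of $m_0^*$ on $(\delta,\alpha)$ forces $\delta\geq\alpha$.

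For part ii), I would first observe that the hypotheses $\int_\Omega m_0\,dx>0$ and $|\{m_0<0\}|>0$ together with $\int_\gamma^{|\Omega|}m_0^*\,ds=0$ force $\gamma<|\{m_0>0\}|$, whence $t^*:=m_0^*(\gamma)>0$ (otherwise $m_0^*$ would be $\leq 0$ on $(\gamma,|\Omega|)$ and strictly negative on the tail $(|\Omega|-|\{m_0<0\}|,|\Omega|)$). Setting $A=\{m_0>t^*\}$ and $B=\{m_0=t^*\}$, the defining property of $m_0^*$ gives $|A|\leq\gamma\leq|A|+|B|$, so a measurable $B'\subset B$ with $|B'|=\gamma-|A|$ exists. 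Put $E=A\cup B'$ and $\overline{m}_0=m_0\chi_E$. A direct case analysis on the value of $t$ yields $d_{\overline{m}_0}(t)=d_{m_0}(t)$ for $t\geq t^*$, $d_{\overline{m}_0}(t)=|E|=\gamma$ for $0\leq t<t^*$, and $d_{\overline{m}_0}(t)=|\Omega|$ for $t<0$; inverting via Definition \ref{riord} delivers \eqref{mostar}.

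For inequality \eqref{due}, take $m\in\overline{\mathcal{G}(m_0)}$ with $m\geq 0$ a.e.\ and set $\delta=|\{m>0\}|$. By Corollary \ref{minore} and the choice of $\gamma$,
\begin{equation*}
\int_0^\delta m^*\,ds=\int_\Omega m\,dx=\int_\Omega m_0\,dx=\int_0^\gamma m_0^*\,ds.
\end{equation*}
If $\delta<\gamma$, then since $\gamma<|\{m_0>0\}|$ we have $m_0^*>0$ throughout $(\delta,\gamma)$, so $\int_0^\delta m^*\,ds\leq \int_0^\delta m_0^*\,ds<\int_0^\gamma m_0^*\,ds$, a contradiction.

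The main obstacle is the construction of $E$: the delicate point is that $m_0$ may carry a genuine atom at the level $t^*$, so one cannot simply take $E=\{m_0>t^*\}$ or $E=\{m_0\geq t^*\}$. Selecting inside $B$ a measurable subset $B'$ of the exact measure $\gamma-|A|$ is what guarantees that the distribution function of $\overline{m}_0$ coincides with that of $m_0$ above $t^*$ and flattens out at exactly $\gamma$ below it; once this is in place, both \eqref{mostar} and \eqref{due} reduce to routine bookkeeping with distribution functions together with Definition \ref{prec1}.
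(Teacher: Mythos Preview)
Your argument is correct and follows essentially the same route as the paper's proof: both parts rest on the contradiction obtained by comparing $\int_0^\delta m^*\,ds$ with $\int_0^\delta m_0^*\,ds$ via $m\prec m_0$, and in part ii) the set $E$ is chosen, exactly as in the paper, sandwiched between $\{m_0>m_0^*(\gamma)\}$ and $\{m_0\geq m_0^*(\gamma)\}$ with $|E|=\gamma$, after which the distribution function of $\overline{m}_0$ is computed by the same case analysis. The only notable difference is that you make explicit the fact $t^*=m_0^*(\gamma)>0$ (which the paper uses tacitly when writing the case $0\leq t<m_0^*(\gamma)$) and you invoke Proposition~\ref{prec} to justify $m\geq 0$ in part i), whereas the paper simply asserts it.
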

\begin{proof}
i) Let $m\in\overline{\mathcal{G}(m_0)}$ 
nonnegative. First observe that, by  Proposition \ref{furbi}, \eqref{uno} is equivalent to 
 \begin{equation}\label{burtdisug}
|\{s\in(0,|\Omega|): m^*_0(s)>0\}|\leq|\{s\in(0,|\Omega|): m^* (s)>0\}|,
\end{equation}
we put $\alpha=|\{s\in(0,|\Omega|): m_0^*(s)>0\}|$ and $\beta=
|\{s\in(0,|\Omega|): m^* (s)>0\}|$. Being the decreasing rearrangement a decreasing and right continuous 
function, we also have $\{s\in(0,|\Omega|): m_0^*(s)>0\}=(0,\alpha)$ and $\{s\in(0,|\Omega|): m^* (s)>0\}
=(0,\beta)$. 
Suppose now that \eqref{burtdisug} is false, i.e. $\alpha>\beta$.  By i) of Proposition \ref{convexity} and Definition \ref{prec1} we have 
$$\int_0^\beta m^*\,ds=\int_0^{|\Omega|}m^*\,ds=\int_0^{|\Omega|}m_0^*\,ds=\int_0^\alpha m_0^*\,ds>
\int_0^\beta m_0^*\,ds,$$
which contradicts $m\prec m_0$.\\
ii) Let $m\in\overline{\mathcal{G}(m_0)}$ 
nonnegative. We begin by noting that
 \begin{equation}\label{subset}
\{s\in(0,|\Omega|): m^*_0(s)>m^*_0(\gamma)\}\subset(0,\gamma)\subset\{s\in(0,|\Omega|): m_0^* 
(s)\geq m^*_0(\gamma)\}.
\end{equation}
From \eqref{subset} and Proposition \ref{furbi} we obtain
 \begin{equation*}
|\{x\in\Omega: m_0(x)>m^*_0(\gamma)\}|\leq\gamma\leq|\{x\in\Omega: m_0 
(x)\geq m^*_0(\gamma)\}|,
\end{equation*}
then we choose a subset $E$ of $\Omega$ of measure $\gamma$ such that  
 \begin{equation*}
\{x\in\Omega: m_0(x)>m^*_0(\gamma)\}\subseteq E\subseteq\{x\in\Omega: m_0 
(x)\geq m^*_0(\gamma)\}.
\end{equation*}
Let $\overline{m}_0$ defined in \eqref{barm0}, it is immediate to find 
\begin{equation*}
\{x\in\Omega: \overline{m}_0(x)>t\}=
\begin{cases}
\Omega\quad &\text{if } t<0\\
          E             &\text{if } 0\leq t<m^*_0(\gamma)\\
\{x\in\Omega: m_0(x)>t\}  &\text{if }  t\geq m^*_0(\gamma).
\end{cases}  
\end{equation*}
Hence, we have \eqref{mostar},
from which immediately follows $\overline{m}_0\prec m_0$, i.e. $\overline{m}_0\in\overline{\mathcal{G}(m_0)}$.
The proof concludes arguing exactly as in part i): we note that \eqref{due} is equivalent to 
 \begin{equation}\label{burtdisug2}
|\{s\in(0,|\Omega|): \overline{m}^*_0(s)>0\}|\leq|\{s\in(0,|\Omega|): m^*(s)>0\}|,
\end{equation}
we have $\gamma=|\{s\in(0,|\Omega|): \overline{m}_0^*(s)>0\}|$ and  put $\beta=
|\{s\in(0,|\Omega|): m^* (s)>0\}|$. We observe that $\{s\in(0,|\Omega|): \overline{m}_0^*(s)>0\}=(0,\gamma)$ and $\{s\in(0,|
\Omega|): m^* (s)>0\}=(0,\beta)$. 
Suppose \eqref{burtdisug2} is false, i.e. $\gamma>\beta$. By the same argument as in i), we have 
$$\int_0^\beta m^*\,ds=\int_0^{|\Omega|}m^*\,ds=\int_0^{|\Omega|}\overline{m}_0^*\,ds=
\int_0^\gamma\overline{m}_0^*\,ds>\int_0^\beta\overline{m}_0^*\,ds=
\int_0^\beta m_0^*\,ds,$$
which contradicts $m\prec m_0$.\\
This completes the proof.
\end{proof}

The following is \cite[Theorem 11.1]{day70} rephrased for our case.
\begin{proposition}
Let $u\in L^1(\Omega)$ and $m_0\in L^\infty(\Omega)$. Then 
\begin{equation}\label{day}
\int_0^{|\Omega|} m_0^*(|\Omega|-s) u^*(s)\, ds\leq\int_\Omega m\,u\, dx
\leq\int_0^{|\Omega|} m_0^*(s) u^*(s)\, ds\quad\quad\forall\, m\in\mathcal{G}(m_0);
\end{equation}
moreover, both sides of \eqref{day} are taken on.
\end{proposition}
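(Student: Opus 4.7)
The plan is to establish both inequalities by combining the layer-cake representation with the elementary set bounds $\max(0, |A| + |B| - |\Omega|) \leq |A \cap B| \leq \min(|A|, |B|)$, and then to exhibit the attainers via Ryff's theorem (\cite{ryff67}).

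First I would reduce to the case $m, m_0 \geq 0$. With $c := \essinf m_0$, the shifted weights $\widetilde m := m - c$ and $\widetilde m_0 := m_0 - c$ are nonnegative and equimeasurable, with $\widetilde m_0^* = m_0^* - c$; since $\int_\Omega u\,dx = \int_0^{|\Omega|} u^*\,ds$, the constant $c$ cancels on both sides of \eqref{day}, so it suffices to treat nonnegative $m_0$. For signed $u \in L^1(\Omega)$ I would split $u = u^+ - u^-$ and use the identity
\[
u^*(s) = (u^+)^*(s) - (u^-)^*(|\Omega| - s) \qquad \text{a.e. } s \in (0, |\Omega|),
\]
which follows directly from the distribution functions of $u^+$ and $u^-$.

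For the core bound, taking $m \geq 0$ and $v \geq 0$ (to be specialized to $v = u^\pm$), the layer cake $f = \int_0^\infty \chi_{\{f > t\}}\,dt$ combined with Fubini gives
\[
\int_\Omega m\,v\,dx = \int_0^\infty\!\!\int_0^\infty |\{m > t\} \cap \{v > s\}|\,dt\,ds.
\]
Using $|A \cap B| \leq \min(|A|, |B|)$ together with $|\{m > t\}| = |\{m_0 > t\}|$ (since $m \sim m_0$) bounds the integrand pointwise by $\min(d_{m_0}(t), d_v(s))$; running the same chain backwards with $\{m_0^* > t\}$ and $\{v^* > s\}$ (intervals of measures $d_{m_0}(t)$ and $d_v(s)$, respectively) in place of $\{m > t\}$ and $\{v > s\}$ identifies this with $\int_0^{|\Omega|} m_0^*(s) v^*(s)\,ds$. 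Applying this with $v = u^+$, and the dual inequality $|A \cap B| \geq \max(0, |A| + |B| - |\Omega|)$ with $v = u^-$, yields
\[
\int_\Omega m u^+\,dx \leq \int_0^{|\Omega|} m_0^*(s) (u^+)^*(s)\,ds, \qquad \int_\Omega m u^-\,dx \geq \int_0^{|\Omega|} m_0^*(|\Omega| - s)(u^-)^*(s)\,ds;
\]
subtracting these and invoking the identity for $u^*$ together with the substitution $s \mapsto |\Omega| - s$ in the $u^-$ integral produces the upper bound in \eqref{day}. The lower bound is obtained symmetrically.

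For attainment, Ryff's theorem yields a measure-preserving map $\varphi : \Omega \to (0, |\Omega|)$ with $u = u^* \circ \varphi$ a.e. Then $\widehat m := m_0^* \circ \varphi$ belongs to $\mathcal{G}(m_0)$ (composition with a measure-preserving map preserves the distribution function), and the change-of-variables formula gives $\int_\Omega \widehat m\,u\,dx = \int_0^{|\Omega|} m_0^*(s) u^*(s)\,ds$, realizing the upper bound; replacing $\varphi$ by $|\Omega| - \varphi$ realizes the lower bound. The delicate point I expect is the identity $u^* = (u^+)^* - (u^-)^*(|\Omega| - \cdot)$ at the transition $s = |\{u > 0\}|$ and at level sets of positive measure, though these issues are harmless in integral estimates; Ryff's construction of $\varphi$ may then be imported as a black box.
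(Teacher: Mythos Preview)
The paper does not give its own proof of this proposition; it merely cites it as \cite[Theorem 11.1]{day70} and moves on. Your argument is a correct, self-contained derivation along the standard Hardy--Littlewood route: the layer-cake identity together with the elementary bounds $\max(0,|A|+|B|-|\Omega|)\le |A\cap B|\le\min(|A|,|B|)$ yields both inequalities for nonnegative integrands, the shift by $c=\essinf m_0$ and the decomposition $u=u^+-u^-$ reduce the general case to that one, and Ryff's measure-preserving map produces the attainers. Two small points of precision are worth making explicit. First, the a.e.\ identity $u^*(s)=(u^+)^*(s)-(u^-)^*(|\Omega|-s)$ is indeed correct, but since it carries the whole signed-$u$ reduction it deserves the one-line verification you allude to (it follows from $(u^+)^*=\max(u^*,0)$ and the analogous formula for $(u^-)^*$). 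Second, ``replacing $\varphi$ by $|\Omega|-\varphi$'' for the lower attainer should be read as taking $\widehat m:=m_0^*\!\big(|\Omega|-\varphi(\cdot)\big)$ while keeping $u=u^*\circ\varphi$; then $|\Omega|-\varphi$ is still measure-preserving into $(0,|\Omega|)$, so $\widehat m\in\mathcal{G}(m_0)$ and $\int_\Omega\widehat m\,u\,dx=\int_0^{|\Omega|}m_0^*(|\Omega|-s)\,u^*(s)\,ds$ as required. With these clarifications your proof is complete.
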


The previous proposition implies that the linear optimization problems
\begin{equation}\label{max}
 \sup_{m \in \mathcal{G}(m_0)} \int_\Omega m u \, dx\end{equation}
 and
 \begin{equation}\label{minG}
 \inf_{m \in \mathcal{G}(m_0)} \int_\Omega m u \, dx\end{equation}
 admit solution.

Finally, we recall the following results proved in \cite[Theorem 3 and Theorem 5]{B87}
and \cite[Lemma 2.9]{B89} respectively.

\begin{proposition}\label{Teobart87bis}
Let $u\in L^1(\Omega)$ and $m_0\in L^\infty(\Omega)$. 
If there exists an increasing function $\psi$ such that $\psi \circ u$ is a rearrangement of $m_0$ a.e. in 
$\Omega$, then problem \eqref{max}, with $\mathcal{G}(m_0)$ replaced by $\overline{\mathcal{G}(m_0)}$,
has the unique solution $m_M:=\psi \circ u$.
\end{proposition}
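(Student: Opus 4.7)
My plan is to split the argument into four parts: existence, an a priori upper bound, attainment by $m_M$, and uniqueness.

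I would first handle existence and the upper bound together. Since $u\in L^1(\Omega)$, the linear functional $L(m):=\int_\Omega m u\,dx$ is weak*-continuous on $L^\infty(\Omega)$; together with the weak* compactness of $\overline{\mathcal{G}(m_0)}$ (Proposition \ref{cane}), this produces a maximizer. The inequality \eqref{day} gives
\begin{equation*}
L(m)\leq\int_0^{|\Omega|}m_0^*(s)\,u^*(s)\,ds\qquad\forall\,m\in\mathcal{G}(m_0),
\end{equation*}
and the bound extends to $\overline{\mathcal{G}(m_0)}$ by weak* density of $\mathcal{G}(m_0)$ and weak* continuity of $L$.

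For the attainment, I would apply Proposition \ref{furbi} to the Borel map $F(t):=t\,\psi(t)$, noting that $F\circ u\in L^1(\Omega)$ because $\psi\circ u\in L^\infty(\Omega)$ and $u\in L^1(\Omega)$. This yields
\begin{equation*}
L(m_M)=\int_\Omega \psi(u)\,u\,dx=\int_0^{|\Omega|}\psi(u^*(s))\,u^*(s)\,ds.
\end{equation*}
The function $s\mapsto\psi(u^*(s))$ is decreasing on $(0,|\Omega|)$ (since $\psi$ is increasing and $u^*$ decreasing) and, by Proposition \ref{rospo} iii), equimeasurable with $\psi\circ u=m_M\sim m_0$; a decreasing function equimeasurable with $m_0$ coincides a.e.~with $m_0^*$, so $\psi\circ u^*=m_0^*$ a.e.~and $L(m_M)=\int_0^{|\Omega|}m_0^*(s)u^*(s)\,ds$, matching the upper bound.

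The main obstacle is uniqueness, which I would handle in two stages. In the first stage I would show that $m_M$ is the only maximizer among the extreme points $\mathcal{G}(m_0)$ of $\overline{\mathcal{G}(m_0)}$ (Proposition \ref{convexity} iii)), by analysing the equality case of \eqref{day}: if $m\in\mathcal{G}(m_0)$ saturates the inequality then $m$ and $u$ must be similarly ordered, so every superlevel set $\{m>t\}$ coincides a.e.~with a superlevel set of $u$; together with $m\sim m_0$ this prescribes $|\{m>t\}|=|\{m_0>t\}|$ and forces the same superlevel structure as $m_M=\psi\circ u$, giving $m=m_M$ a.e. In the second stage, the set $S$ of maximizers in $\overline{\mathcal{G}(m_0)}$ is a convex, weak*-closed face, hence weak*-compact, and its extreme points lie in $\mathcal{G}(m_0)\cap S=\{m_M\}$, so the Krein--Milman theorem forces $S=\{m_M\}$. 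The delicate point in the first stage is the treatment of flat sets $\{u=t_0\}$ of positive measure, which I would address by noting that $m_M$ is constantly $\psi(t_0)$ on every such set and that the similarly-ordered constraint combined with $m\sim m_0$ forces $m$ to agree with that constant a.e.\ there.
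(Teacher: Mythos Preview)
The paper does not supply its own proof of this proposition: it is quoted from \cite[Theorems~3 and~5]{B87} without argument, so there is no in-paper proof to compare against. Your existence/upper-bound/attainment steps are correct and essentially standard; the use of Proposition~\ref{furbi} with $F(t)=t\psi(t)$ and the identification $\psi\circ u^*=m_0^*$ a.e.\ is clean.

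The uniqueness argument has the right architecture (Krein--Milman reduction to $\mathcal{G}(m_0)$, then equality analysis in \eqref{day}), but Stage~1 contains an imprecise step. From ``$m$ and $u$ similarly ordered'' you \emph{cannot} conclude that every superlevel set $\{m>t\}$ coincides a.e.\ with a superlevel set of $u$: when $|\{u=t_0\}|>0$ the similarly-ordered condition places no constraint on $m$ inside that level set, so a priori $\{m>t\}$ is only \emph{sandwiched}, $\{u>s\}\subseteq\{m>t\}\subseteq\{u\ge s\}$ for some $s$. The constraint $m\sim m_M$ with $m_M=\psi(u)$ constant on each level set of $u$ is what forces $|\{m>t\}|$ to equal either $|\{u>s\}|$ or $|\{u\ge s\}|$ (never strictly between), and hence $\{m>t\}=\{m_M>t\}$ a.e. This is presumably what you intend in your last sentence, but as written the order of the deductions is misleading, and the equality characterisation ``saturating \eqref{day} $\Leftrightarrow$ similarly ordered'' is itself not part of the paper's toolkit and would need a proof or an external reference.

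For comparison, Burton's original route avoids these issues via convex duality: one builds a convex $\Phi$ whose subdifferential inverts $\psi$, so that $u(x)\in\partial\Phi(m_M(x))$ pointwise; then the subgradient inequality $\Phi(m)\ge\Phi(m_M)+u(m-m_M)$ integrated over $\Omega$, together with the majorisation inequality $\int\Phi(m)\le\int\Phi(m_0)=\int\Phi(m_M)$ for $m\prec m_0$, forces equality everywhere and hence $m=m_M$. This handles both $\mathcal{G}(m_0)$ and $\overline{\mathcal{G}(m_0)}$ in one stroke, without Krein--Milman and without a separate treatment of flat sets.
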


\begin{proposition}\label{Teobart87}
Let $u\in L^1(\Omega)$ and $m_0\in L^\infty(\Omega)$. If problem \eqref{max} has a unique solution 
$m_M$, 
then there exists an increasing function $\psi$ such that $m_M=\psi \circ u$ a.e. in 
$\Omega$.
\end{proposition}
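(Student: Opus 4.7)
I would attack the statement in two stages. First, a classical rearrangement ``swap'' argument shows that every maximizer $m_M$ of \eqref{max} must be co-monotone with $u$ in the sense that $u(x)<u(y)$ implies $m_M(x)\le m_M(y)$ a.e.\ in $\Omega$; this step uses only that $m_M$ is a maximizer, not uniqueness. Second, the uniqueness hypothesis will force $m_M$ to be essentially constant on each level set of $u$ of positive measure. Combining the two yields the desired factorization $m_M=\psi\circ u$ with $\psi$ nondecreasing.

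For the first stage, suppose co-monotonicity fails. Then by a countable pigeonhole over rational thresholds there exist $s,t\in\mathbb{Q}$ such that
\[
A:=\{x\in\Omega:\,u(x)<s,\ m_M(x)>t\}\quad\text{and}\quad B:=\{x\in\Omega:\,u(x)>s,\ m_M(x)<t\}
\]
both have positive Lebesgue measure. I would select $A'\subseteq A$ and $B'\subseteq B$ of equal positive measure and a measure-preserving bijection $\sigma\colon A'\to B'$ (standard for Lebesgue subsets of $\mathbb{R}^N$), and define $\tilde m$ to agree with $m_M$ off $A'\cup B'$ and to swap the values via $\sigma$ on $A'\cup B'$. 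Then $\tilde m$ is equimeasurable with $m_M$, so $\tilde m\in\mathcal{G}(m_0)$, and a change of variables through $\sigma$ gives
\[
\int_\Omega \tilde m\,u\,dx-\int_\Omega m_M\,u\,dx=\int_{A'}\bigl(m_M(\sigma(x))-m_M(x)\bigr)\bigl(u(x)-u(\sigma(x))\bigr)\,dx.
\]
On $A'$ the first factor is strictly negative (since $m_M(\sigma(x))<t<m_M(x)$) and so is the second ($u(x)<s<u(\sigma(x))$), so the integrand is strictly positive on a set of positive measure, contradicting the maximality of $m_M$.

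For the second stage, suppose $L:=\{u=c\}$ has positive measure and $m_M|_L$ is not essentially constant. Any rearrangement of $m_M$ within $L$, extended by $m_M$ on $\Omega\smallsetminus L$, remains in $\mathcal{G}(m_0)$ and produces the same value of $\int_\Omega m\,u\,dx$ because $u\equiv c$ on $L$; this violates uniqueness, hence $m_M$ is essentially constant on every positive-measure level set of $u$. I would then define $\psi$ on the essential range of $u$ by $\psi(c):=\esssup\{m_M(y):u(y)\le c\}$: this is manifestly nondecreasing, co-monotonicity gives $m_M(x)\le\psi(u(x))$ a.e., and the constancy on positive-measure level sets (together with co-monotonicity applied to $\{u<u(x)\}$) supplies the reverse inequality, so $m_M=\psi\circ u$ a.e. The main technical point is the construction of the measure-preserving bijection $\sigma$ and a careful bookkeeping around jumps of $\psi$ and the at most countable family of positive-measure level sets, but neither introduces essential difficulty.
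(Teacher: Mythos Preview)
The paper does not give its own proof of this proposition; it simply records it as Theorem~5 of Burton~\cite{B87} (see the sentence preceding Propositions~\ref{Teobart87bis}--\ref{Teobart89}). So there is no in-paper argument to compare against, only Burton's original one.

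Your two-stage plan is correct and is in fact the same strategy Burton uses. Stage~1 (the exchange argument showing that any maximizer must be similarly ordered with $u$) is carried out exactly as you describe: the countable decomposition over rational thresholds, the choice of equal-measure pieces $A',B'$, the measure-preserving bijection, and the computation
\[
\int_\Omega(\tilde m-m_M)u\,dx=\int_{A'}\bigl(m_M(\sigma(x))-m_M(x)\bigr)\bigl(u(x)-u(\sigma(x))\bigr)\,dx>0
\]
are all right. Stage~2 (uniqueness forces $m_M$ to be essentially constant on each positive-measure level set of $u$) is also correct and is precisely where the uniqueness hypothesis enters.

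The only place that deserves a little more than you give it is the passage from ``co-monotone and constant on fat level sets'' to an explicit increasing $\psi$ with $m_M=\psi\circ u$ a.e. Your formula $\psi(c)=\esssup\{m_M(y):u(y)\le c\}$ is fine, but verifying $m_M(x)\ge\psi(u(x))$ a.e.\ requires first turning the product-measure co-monotonicity into a Fubini statement (``for a.e.\ $x$, for a.e.\ $y$ with $u(y)<u(x)$ one has $m_M(y)\le m_M(x)$'') and then treating separately the at most countably many values $c$ with $|\{u=c\}|>0$, where Stage~2 supplies the missing inequality. You have flagged this bookkeeping yourself, and it is indeed routine. Burton's original construction goes instead through the decreasing rearrangements, defining $\psi$ essentially by $\psi= m_0^{*}\circ d_u$ on the range of $u$; your direct definition is an equally valid alternative.
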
  

\begin{proposition}\label{Teobart89}
Let $E$ be a measurable set of finite measure, $u,m:E\to\mathbb{R}$  measurable 
functions and suppose that every level set of $u$ has zero measure. Then there is an 
increasing function $\psi$ such that $\psi\circ u$ is a rearrangement of $m$. 
\end{proposition}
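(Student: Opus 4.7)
The plan is to explicitly construct $\psi$ as the composition $\psi := m^* \circ d_u$, where $d_u$ is the distribution function of $u$ and $m^*$ is the decreasing rearrangement of $m$. Monotonicity of $\psi$ is immediate from this formula: both factors are decreasing, so the composition is increasing. Everything then reduces to showing the equimeasurability $\psi\circ u \sim m$, or equivalently $d_{\psi\circ u} = d_m$.

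The hypothesis that every level set of $u$ is null enters the argument in exactly one crucial way: it forces $d_u$ to be continuous. Indeed $d_u$ is always right-continuous on $\mathbb{R}$, while its left-jump at each $t$ equals $|\{u = t\}|$, which vanishes by assumption. Combined with $d_u(-\infty) = |E|$ and $d_u(+\infty) = 0$, continuity forces $d_u$ to be surjective onto $(0,|E|)$, which is the input needed for the ``probability integral transform''
\begin{equation*}
\big|\{x \in E : d_u(u(x)) < \tau\}\big| = \tau \qquad \forall\,\tau \in (0,|E|).
\end{equation*}
To establish this identity, I would pick $t_\tau \in \mathbb{R}$ with $d_u(t_\tau) = \tau$; monotonicity of $d_u$ gives $\{s \in \mathbb{R}: d_u(s) < \tau\} = (t_\tau, +\infty)$, so the left-hand side equals $|\{u > t_\tau\}| = d_u(t_\tau) = \tau$.

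To conclude, I would compute $d_{\psi\circ u}(s)$ directly for arbitrary $s \in \mathbb{R}$. Standard rearrangement identities (right-continuity and monotonicity of $m^*$, together with the definition of $m^*$ from $d_m$) give $\{r \in (0,|E|) : m^*(r) > s\} = (0,d_m(s))$. Consequently
\begin{equation*}
d_{\psi\circ u}(s) = \big|\{x\in E : d_u(u(x)) \in (0,d_m(s))\}\big| = d_m(s),
\end{equation*}
the last step being the probability integral transform applied with $\tau = d_m(s)$. This yields $d_{\psi\circ u} = d_m$ and hence $\psi\circ u \sim m$, as required.

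I expect the main obstacle to be a clean treatment of the endpoints. The composition $m^*\circ d_u$ genuinely requires $d_u(t)\in(0,|E|)$, and one has to verify that the exceptional set $\{x : d_u(u(x))\in\{0,|E|\}\}$ is negligible; this reduces to checking that $\{u=\essinf u\}$ and $\{u=\esssup u\}$ are null, which follows from the level-set hypothesis together with $|\{u>\esssup u\}|=|\{u<\essinf u\}|=0$. The corner cases where $d_m(s)=0$ or $d_m(s)=|E|$ likewise need a short separate verification (for instance, $d_m(s)=|E|$ corresponds to $s<\essinf m$, in which case $m^*(r)\ge\essinf m>s$ for every $r\in(0,|E|)$), but they do not affect the structure of the argument.
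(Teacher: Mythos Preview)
The paper does not prove this proposition; it is merely quoted from \cite[Lemma 2.9]{B89}. Your explicit construction $\psi=m^*\circ d_u$ is in fact the standard argument (and is essentially Burton's), and the outline you give is correct. One small point worth tightening: when you write ``monotonicity of $d_u$ gives $\{s\in\mathbb{R}:d_u(s)<\tau\}=(t_\tau,+\infty)$'', this fails verbatim if $d_u$ has a plateau at level $\tau$ and you happen to pick $t_\tau$ at the left end of the plateau. The fix is either to take $t_\tau=\sup\{s:d_u(s)\geq\tau\}$, or simply to observe that any two choices of $t_\tau$ differ on a set where $u$ takes values in an interval of $d_u$-measure zero, hence on a null subset of $E$; either way the conclusion $|\{x:d_u(u(x))<\tau\}|=\tau$ is unaffected. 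With that adjustment and the endpoint checks you already flagged, the proof is complete.
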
  
  
\begin{remark}\label{ossburt}
Considering $-u$ in the previous three propositions, it is immediate to conclude that
they have three counterparts where \eqref{max} and ``increasing'' are replaced by
\eqref{minG} and ``decreasing'' respectively.
 
\end{remark}

\section{Qualitative properties of $\mu_1(m)$}\label{properties}   

In this section we will prove some qualitative properties of the eigenvalue $\mu_1(m)$
of the operator $G_m$ defined in \eqref{Gmf}.
We begin by proving the continuity of $\mu_1(m)$ (actually, of all of the eigenvalues of the 
operator $G_m$) and then showing its convexity and G\^{a}teaux 
differentiability.

Observe that, by Proposition \ref{segnorho}, 
$\mu_k(m)$ and $u_m$ (the unique positive eigenfunction of $\mu_1(m)$ of problem \eqref{p0}
normalized as in \eqref{normaliz1}) are well defined only when $|\{m>0\}|>0$. We extend 
them
to the whole space $L^\infty(\Omega)$ by putting, for $k=1,2,3,\ldots$,
\begin{equation} \label{muk}
\widetilde{\mu}_k(m)=\begin{cases} \mu_k(m) \quad & \text{if } |\{m>0\}|>0\\
0 & \text{if } |\{m>0\}|=0\end{cases}
\end{equation}
and
\begin{equation}  \label{um} 
\widetilde{u}_m=\begin{cases}u_m \quad & \text{if } |\{m>0\}|>0\\
0 & \text{if } |\{m>0\}|=0.\end{cases}
\end{equation}     

\begin{remark}\label{oss1}
Note that $\widetilde{\mu}_k(m)=0$ if and only if $|\{m>0\}|=0$ and, in this 
circumstance,  the inequality
\begin{equation}\label{tilde}\sup_{F_k\subset W_\sigma(\Omega)}
\min_{f\in F_k\atop f\neq 0}
\cfrac{\langle G_m (f), f \rangle_{W_\sigma(\Omega)} }{\|f\|^2_{W_\sigma(\Omega)}} \,\leq 0 
\end{equation}
holds, where $F_k$ varies among all the $k$-dimensional subspaces of $W_\sigma(\Omega)$. 
Moreover,  from \eqref{homo}, we have $\widetilde{\mu}_1(\alpha m)=\alpha \widetilde{\mu} 
_1(m)$ for every $m\in L^\infty(\Omega)$ and $\alpha\geq 0$.
\end{remark}

\begin{lemma}\label{teo1} 
Let $m\in L^\infty(\Omega)$,  $G_m$ be the linear operator \eqref{Gm}, $\widetilde{\mu}_k(m)$       
as defined in \eqref{muk} for $k=1,2,3,\ldots$ and $\widetilde{u}_m$ as in \eqref{um}. 
Then\\
i) the map $m\mapsto G_m$ is sequentially weakly* continuous from $L^\infty(\Omega)$
to $\mathcal{L}(W_\sigma(\Omega),W_\sigma(\Omega)) $ endowed with the norm topology;\\
ii) the map $m\mapsto \widetilde{\mu}_k(m)$ is sequentially weakly* continuous in
$L^\infty(\Omega)$; \\
iii) the map $m\mapsto \widetilde{\mu}_1(m)\widetilde{u}_m$ is sequentially weakly* continuous
from $L^{\infty}(\Omega)$ to $W_\sigma(\Omega)$ (endowed with the norm topology). In particular, 
for any sequence $\{m_i\}$ weakly* convergent to $m\in L^\infty(\Omega)$, with $\widetilde{\mu}_1(m)>0$, then 
$\{\widetilde{u}_{m_i}\}$ converges to $\widetilde{u}_{m}$ in $W_\sigma(\Omega)$.
\end{lemma}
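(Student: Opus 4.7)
The unifying tool will be the compactness of the embedding $i_\sigma:W_\sigma(\Omega)\hookrightarrow L^2(\Omega)$, which allows one to pass from bounded or weakly convergent sequences in $W_\sigma(\Omega)$ to $L^2$-strongly convergent ones; this dovetails with weak* convergence of the weights through products.

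For \emph{part (i)}, I argue by contradiction. If $\|G_{m_n}-G_m\|_{\mathcal{L}(W_\sigma,W_\sigma)}\not\to 0$, extract a subsequence and unit vectors $f_n\in W_\sigma(\Omega)$ with $\|w_n\|_{W_\sigma}\geq\varepsilon$, where $w_n:=G_{m_n}(f_n)-G_m(f_n)$. Since $\{m_n\}$ is bounded in $L^\infty(\Omega)$ by the uniform boundedness principle, \eqref{normau} applied to $m_n-m$ shows $\{w_n\}$ is bounded in $W_\sigma(\Omega)$. Applying $i_\sigma$ twice, pass to a further subsequence so that $f_n\to f$ and $w_n\to w$ in $L^2(\Omega)$. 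Testing the definition \eqref{Gmf} of $G_{m_n-m}$ with $\varphi=w_n$ and splitting,
\[
\|w_n\|_{W_\sigma}^2=\int_\Omega(m_n-m)fw_n\,dx+\int_\Omega(m_n-m)(f_n-f)w_n\,dx.
\]
The second integral is bounded by $\|m_n-m\|_{L^\infty}\|f_n-f\|_{L^2}\|w_n\|_{L^2}\to 0$, while the first tends to $0$ because $fw_n\to fw$ in $L^1(\Omega)$ and $m_n-m\overset{*}{\rightharpoonup}0$ in $L^\infty(\Omega)$. This contradicts $\|w_n\|_{W_\sigma}\geq\varepsilon$.

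For \emph{part (ii)}, define on all of $L^\infty(\Omega)$
\[
\nu_k(m):=\sup_{F_k\subset W_\sigma(\Omega)}\inf_{f\in F_k\setminus\{0\}}\frac{\langle G_m(f),f\rangle_{W_\sigma}}{\|f\|_{W_\sigma}^2}.
\]
Standard sup--inf estimates give $|\nu_k(m)-\nu_k(m')|\leq\|G_m-G_{m'}\|_{\mathcal{L}(W_\sigma,W_\sigma)}$, so (i) makes $m\mapsto\nu_k(m)$ sequentially weak*-continuous. When $|\{m>0\}|>0$, Fischer's Principle \eqref{Fischer1} identifies $\nu_k(m)$ with $\mu_k(m)>0$; when $|\{m>0\}|=0$, Proposition \ref{segnorho}(i) forces $\nu_k(m)\leq 0$, whence $\widetilde{\mu}_k(m)=\max(\nu_k(m),0)$, which inherits continuity.

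For \emph{part (iii)}, the case $\widetilde{\mu}_1(m)=0$ follows from $\|\widetilde{\mu}_1(m_n)\widetilde{u}_{m_n}\|_{W_\sigma}\leq\widetilde{\mu}_1(m_n)\to 0$ by (ii). So assume $\widetilde{\mu}_1(m)>0$: by (ii), eventually $\widetilde{\mu}_1(m_n)>0$, hence $\widetilde{u}_{m_n}=u_{m_n}$ with $\|u_{m_n}\|_{W_\sigma}=1$. Up to subsequence $u_{m_n}\rightharpoonup v$ in $W_\sigma(\Omega)$ and, by $i_\sigma$, $u_{m_n}\to v$ in $L^2(\Omega)$. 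Passing to the limit in the eigenvalue identity $\langle u_{m_n},\varphi\rangle_{W_\sigma}=\mu_1(m_n)^{-1}\int_\Omega m_n u_{m_n}\varphi\,dx$ (using the product-splitting trick from part (i) to handle $m_n u_{m_n}$) shows that $v$ satisfies the eigenvalue equation for $G_m$ with eigenvalue $\mu_1(m)$. An analogous passage in $\int_\Omega m_n u_{m_n}^2\,dx=\mu_1(m_n)$ gives $\int_\Omega mv^2\,dx=\mu_1(m)>0$, so $v\neq 0$; since $u_{m_n}\geq 0$ yields $v\geq 0$, the simplicity of $\mu_1(m)$ (Proposition \ref{simple}) together with the normalization \eqref{normaliz1}--\eqref{normaliz2} forces $v=u_m$. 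Finally, $\|u_{m_n}\|_{W_\sigma}=1=\|u_m\|_{W_\sigma}$ combined with weak convergence in the Hilbert space $W_\sigma(\Omega)$ upgrades to strong convergence, and the usual subsequence principle extends it to the full sequence, giving $\widetilde{\mu}_1(m_n)\widetilde{u}_{m_n}\to\widetilde{\mu}_1(m)\widetilde{u}_m$ in $W_\sigma(\Omega)$.

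The main obstacle I expect is the final step of (iii): identifying the weak limit $v$ with $u_m$ (which requires both the nonvanishing of $\int_\Omega mv^2\,dx$ and the simplicity statement from Proposition \ref{simple}) and then promoting weak to strong convergence via the Radon--Riesz property of Hilbert spaces. The other steps reduce, via compactness of $i_\sigma$, to the single bilinear fact that $\int_\Omega (m_n-m)g h_n\,dx\to 0$ whenever $g\in L^2(\Omega)$, $\{h_n\}$ is $L^2$-strongly convergent, and $m_n\overset{*}{\rightharpoonup}m$ in $L^\infty(\Omega)$.
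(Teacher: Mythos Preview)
Your proof is correct and follows essentially the same strategy as the paper: exploit the compact embedding $i_\sigma$ to upgrade weak $W_\sigma$-convergence to strong $L^2$-convergence, then combine this with weak* convergence of the weights via product splittings.

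The differences are mostly cosmetic. In part (i) the paper proceeds constructively, first establishing pointwise convergence $G_{m_i}(f)\to G_m(f)$ for each fixed $f$ and then upgrading to operator-norm convergence via a maximizing sequence $\{f_i\}$; your contradiction argument compresses these two steps into one but uses the same ingredients. Part (iii) is essentially identical to the paper's argument, including the identification $v=u_m$ via simplicity and the Radon--Riesz step. The one place where your treatment is genuinely tidier is part (ii): writing $\widetilde{\mu}_k(m)=\max(\nu_k(m),0)$ with $\nu_k$ the unrestricted sup--inf lets you invoke the single Lipschitz estimate $|\nu_k(m)-\nu_k(m')|\leq\|G_m-G_{m'}\|$ and the $1$-Lipschitz property of $t\mapsto\max(t,0)$, whereas the paper derives the analogous estimate on $\widetilde{\mu}_k$ directly by splitting into three cases according to whether $\widetilde{\mu}_k(m_i)$ and $\widetilde{\mu}_k(m)$ are positive or zero. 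Your formulation buys a shorter argument; the paper's case analysis makes the role of \eqref{tilde} more explicit.
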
 
  
\begin{proof}
i) Let $\{m_i\}$ be a sequence which weakly* converges to $m$ in $L^\infty(\Omega)$. 
Being $\{m_i\}$ bounded in $L^\infty(\Omega)$, there exists a constant $M>0$ such that
\begin{equation}\label{M}
\|m\|_{L^\infty(\Omega)}\leq M\quad \text{ and } \quad \|m_i\|_{L^\infty(\Omega)}\leq M
\quad \forall\, i. 
\end{equation} 
We begin by proving that            
$G_{m_i}(f)$ tends to $G_m(f)$ in $W_\sigma(\Omega)$ for any fixed $f\in W_\sigma(\Omega)$.
Let $u_i=G_{m_i}(f)$ and 
$u=G_{m}(f)$.\\ 
First, we show that $u_i$ weakly converges to $u$ in $W_\sigma(\Omega)$; indeed, by \eqref{Gmf} we have             
\begin{equation*}
\langle u_i,\varphi\rangle_{W_\sigma(\Omega)}=
\langle m_if,\varphi\rangle_{L^2(\Omega)}\quad \forall\varphi \in W_\sigma(\Omega)
\end{equation*}
and
\begin{equation*}
\langle u,\varphi\rangle_{W_\sigma(\Omega)}=
\langle mf,\varphi\rangle_{L^2(\Omega)}\quad \forall\varphi \in W_\sigma(\Omega).
\end{equation*}
Subtracting, we get        
\begin{equation}\label{sottrazione}
\langle u_i-u,\varphi\rangle_{W_\sigma(\Omega)}=
\langle m_if-m f,\varphi\rangle_{L^2(\Omega)}\quad \forall\varphi \in W_\sigma(\Omega).
\end{equation}
As a consequence of the weak* convergence of $m_i$ to $m$ in $L^\infty(\Omega)$, letting $i\to
\infty$ we obtain $\int_\Omega m_i f\varphi \,dx \to \int_\Omega m f\varphi \,dx$, which implies that  the right hand term goes to zero,  thus $u_i$ weakly converges to $u$ in
$W_\sigma(\Omega)$.
By exploiting the compactness of the inclusion $W_\sigma(\Omega)\hookrightarrow L^2(\Omega)$, we deduce that  $u_i$ strongly converges to $u$ in $L^2(\Omega)$.\\
We show now that $u_i$ strongly converges to $u$ in
$W_\sigma(\Omega)$.
Indeed, putting $\varphi=u_i-u$ in \eqref{sottrazione}, by using \eqref{M} we get
\begin{equation*}\quad
\begin{split} 
 \| u_i-u\|^2_{W_\sigma(\Omega)}&=
\langle m_if-mf,u_i-u\rangle_{L^2(\Omega)}
= \int_\Omega (m_i -m)f (u_i-u)\, dx\\ &\leq2M\| f\|_{L^2(\Omega)} \| u_i-u\|_{L^2(\Omega)}.
\end{split} 
\end{equation*}   
 Summarizing, for every  $f\in W_\sigma(\Omega)$ we have   
\begin{align*}
\|G_{m_i}(f) - G_m (f)\|_{W_\sigma(\Omega)}\to 0\quad\text{for }i\to\infty.  
\end{align*}

Now, for fixed $i$, let $\{f_{i,j}\}$, $j=1,2, 3,\ldots$, be a maximizing sequence of
$$\sup_{g\in W_\sigma(\Omega)\atop \|g\|_{W_\sigma(\Omega)}\leq 1}
 \|G_{m_i}(g) - G_{m}(g)\|_{W_\sigma(\Omega)}=\|G_{m_i}-G_m\|_{\mathcal{L}(W_\sigma(\Omega), W_\sigma(\Omega))} .$$      
Then, being $\|f_{i,j}\|_{W_\sigma(\Omega)}\leq 1$, we can extract a subsequence (still denoted by $\{f_{i,j}\}$)
weakly convergent to some $f_i\in W_\sigma(\Omega)$. Since  the operators
$G_{m_i}$ and $G_m$ are compact (see Proposition 
\ref{proprietaop}), it follows that
$G_{m_i}(f_{i,j})$ converges to $G_{m_i}(f_{i})$ and  
$G_{m}(f_{i,j})$ converges to $G_{m}(f_{i})$ strongly in $W_\sigma(\Omega)$ as $j$ goes to
$\infty$. Thus we find
$$ \|G_{m_i}-G_m\|_{\mathcal{L}(W_\sigma(\Omega), W_\sigma(\Omega))}
=\|G_{m_i}(f_i) - G_m(f_i)\|_{W_\sigma(\Omega)}.$$
This procedure yields a sequence $\{f_i\}$ in $W_\sigma(\Omega)$ such that $\|f_{i}\|_{W_\sigma(\Omega)}\leq 1$ 
for all $i$. Then, up to a subsequence, we can assume that
$\{f_i\}$ weakly converges  to a function $f\in W_\sigma(\Omega)$ 
and (by compactness of the inclusion $i_\sigma: W_\sigma(\Omega)\hookrightarrow L^2(\Omega)$) strongly in $L^2(\Omega)$.  
Recalling that $G_m=E_m\circ i_\sigma$ and by using \eqref{normaEm} and \eqref{M} 
we find   
\begin{align*} & \|G_{m_i}- G_m\|_{\mathcal{L}(W_\sigma(\Omega), W_\sigma(\Omega))}=
\|G_{m_i}(f_i) - G_m(f_i)\|_{W_\sigma(\Omega)}\\
& \leq \|G_{m_i}(f) - G_m(f)\|_{W_\sigma(\Omega)} + \|G_{m_i}(f_i-f)\|_{W_\sigma(\Omega)} +\|G_m(f_i-f)\|_
{W_\sigma(\Omega)}\\  
&\leq \|G_{m_i}(f) - G_m(f)\|_{W_\sigma(\Omega)} + \|E_{m_i}(f_i-f)\|_{W_\sigma(\Omega)} +\|E_m(f_i-f)\|_
{W_\sigma(\Omega)}\\    
&\leq \|G_{m_i}(f) - G_m(f)\|_{W_\sigma(\Omega)}+\left(\|E_{m_i}\|_{\mathcal{L}(L^2(\Omega), W_\sigma(\Omega))}+\|E_{m}\|_{\mathcal{L}(L^2(\Omega), 
W_\sigma(\Omega))} \right)   
\|f_i-f\|_{L^2(\Omega)} \\      
 &\leq \|G_{m_i}(f) - G_m(f)\|_{W_\sigma(\Omega)}+2C(\sigma)M \|f_i-f\|_{L^2(\Omega)}.         
 \end{align*}    
Therefore $G_{m_i}$ converges to $G_m$ in the operator norm.\\
ii) Let $\{m_i\}$ be a sequence which weakly* converges to $m$ in $L^\infty(\Omega)$. 
Being $\{m_i\}$ bounded in $L^\infty(\Omega)$, there exists a constant $M>0$ such that
\eqref{M} holds. We show that, for any $i$ and $k=1, 2, 3, \ldots$ the estimate
\begin{equation}\label{modulo}
 |\widetilde{\mu}_k(m_i)- \widetilde{\mu}_k(m)|\leq  \|G_{m_i}-G_m\|_{\mathcal{L}(W_\sigma(\Omega), W_\sigma(\Omega))}  
\end{equation}  
holds, then the claim will follow by i).\\
We split the argument in the following three cases, in each of them $i$ and $k$ are fixed. 
 
\emph{Case 1.}  $\widetilde{\mu}_k(m_i)$, $\widetilde{\mu}_k(m)>0$.\\ Following  the 
idea in \cite[Theorem 2.3.1]{He} and by means of the Fischer's Principle 
\eqref{Fischer1} we have 
\begin{equation*} \begin{split}
\widetilde{\mu}_k(m_i)- \widetilde{\mu}_k(m) &=  \max_{F_k\subset W_\sigma(\Omega)}
\min_{f\in F_k\atop f\neq 0}
\cfrac{\langle G_{m_i}( f), f \rangle_{W_\sigma(\Omega)} }{\|f\|^2_{W_\sigma(\Omega)}} \,
- \max_{F_k\subset W_\sigma(\Omega)}
\min_{f\in F_k\atop f\neq 0}
\cfrac{\langle G_m( f), f \rangle_{ W_\sigma(\Omega)} }{\|f\|^2_{ W_\sigma(\Omega)}} \,\\
& \leq 
\min_{f\in F_k(m_i)\atop f\neq 0}
\cfrac{\langle G_{m_i} (f), f \rangle_{W_\sigma(\Omega)} }{\|f\|^2_{W_\sigma(\Omega)}} \,
- 
\min_{f\in F_k(m_i)\atop f\neq 0}
\cfrac{\langle G_m (f), f \rangle_{W_\sigma(\Omega)} }{\|f\|^2_{W_\sigma(\Omega)}} \,\\
& \leq 
\cfrac{\langle G_{m_i} (f_m), f_m \rangle_{W_\sigma(\Omega)} }{\|f_m\|^2_{W_\sigma(\Omega)}} \,-
\cfrac{\langle G_m (f_m), f_m \rangle_{W_\sigma(\Omega)} }{\|f_m\|^2_{W_\sigma(\Omega)}}\,   \\
& =
\cfrac{\langle (G_{m_i}-G_m)( f_m), f_m \rangle_{W_\sigma(\Omega)} }{\|f_m\|^2_{W_\sigma(\Omega)}} \,
 \leq \|G_{m_i}-G_m\|_{\mathcal{L}(W_\sigma(\Omega), W_\sigma(\Omega))},
\end{split} 
\end{equation*}
where $F_k(m_i)$ is a $k$-dimensional subspace of $W_\sigma(\Omega)$ such that   
$$\max_{F_k\subset W_\sigma(\Omega)}
\min_{f\in F_k\atop f\neq 0}
\cfrac{\langle G_{m_i}( f), f \rangle_{W_\sigma(\Omega)} }{\|f\|^2_{W_\sigma(\Omega)}} \, =\min_{f\in F_k(m_i)\atop f\neq 0}
\cfrac{\langle G_{m_i}( f), f \rangle_{W_\sigma(\Omega)} }{\|f\|^2_{W_\sigma(\Omega)}} \,$$
and $f_m$ is a function in $F_k(m_i)$ such that 
$$ \min_{f\in F_k(m_i)\atop f\neq 0}   
\cfrac{\langle G_m (f), f \rangle_{W_\sigma(\Omega)} }{\|f\|^2_{W_\sigma(\Omega)}} \,=
\cfrac{\langle G_m(f_m), f_m \rangle_{W_\sigma(\Omega)} }
{\|f_m\|^2_{W_\sigma(\Omega)}} \,.
$$      
Interchanging the role of $m_i$ and $m$ we find \eqref{modulo}. 

\emph{Case 2.} $\widetilde{\mu}_k(m_i)>0$, $ \widetilde{\mu}_k(m)=0$ (and similarly in the case $\widetilde{\mu}_k(m)>0$, $ \widetilde{\mu}_k(m_i)=0$).\\
Note that in this case \eqref{tilde} holds for the weight function $m$. Then
the previous argument still applies provided that we replace the first step of the inequality chain by 
\begin{equation*} |\widetilde{\mu}_k(m_i)- \widetilde{\mu}_k(m) |= 
\widetilde{\mu}_k(m_i) \leq \max_{F_k\subset W_\sigma(\Omega)}
\min_{f\in F_k\atop f\neq 0}
\cfrac{\langle G_{m_i}( f), f \rangle_{W_\sigma(\Omega)} }{\|f\|^2_{W_\sigma\Omega)}} \,
- \sup_{F_k\subset W_\sigma(\Omega)}
\min_{f\in F_k\atop f\neq 0}
\cfrac{\langle G_m( f), f \rangle_{W_\sigma(\Omega)} }{\|f\|^2_{W_\sigma(\Omega)}} \,.
\end{equation*}
  
\emph{Case 3.} $\widetilde{\mu}_k(m_i)= \widetilde{\mu}_k(m)=0$.\\
In this case \eqref{modulo} is obvious.\\  
Therefore statement i) is proved.
        
iii) Let $\{m_i\}, m$ be such that $m_i$ is weakly$^*$ convergent
to $m\in L^\infty(\Omega)$. Being $\|\widetilde{u}_{m_i}\|_{W_\sigma(\Omega)} 
\leq 1$,           
up to a subsequence we can assume that $\widetilde{u}_{m_i}$ is weakly convergent to $z
\in W_\sigma(\Omega)$, strongly in $L^2(\Omega)$ and pointwisely a.e. in $\Omega$. \\
When $\widetilde{\mu}_1(m)=0$ the assertion is straightforward. Indeed, we have
$$
\|\widetilde{\mu}_1(m_i) \widetilde{u}_{m_i}-\widetilde{\mu}_1(m) \widetilde{u}_{m}
\|_{W_\sigma(\Omega)}
=\widetilde{\mu}_1(m_i)\| \widetilde{u}_{m_i}\|_{W_\sigma(\Omega)}\leq\widetilde{\mu}_1(m_i)
$$
which, together with ii), gives the claim.

Now let us consider the case $\widetilde{\mu}_1(m)>0$. By ii) we have $\widetilde{\mu}
_1(m_i)>0$
for all $i$ large enough. This implies  $\widetilde{\mu}_1(m_i)
=\frac{1}{\lambda_1(m_i)}\, $ and $\widetilde{u}_{m_i}= u_{m_i}$. Positiveness and pointwise convergence of $u_{m_i}$ to $z$ imply $z\geq 0$ a.e. in $\Omega$.
Moreover, by \eqref{normaliz2} we have
$$ \int_\Omega m_i u^2_{m_i} \, dx =\cfrac{1}{\lambda_1(m_i)}\, $$
and by ii), passing to the limit, we find 
$$ \int_\Omega m z^2 \, dx =\cfrac{1}{\lambda_1(m)}\, ,$$
which implies $z\not\equiv 0$. By using \eqref{falena} for $u_{m_i}$  
we have
\begin{equation*}\langle  u_{m_i}, \varphi\rangle_{W_\sigma(\Omega)} =   
\lambda_1(m_i) \langle m_i u_{m_i}, \varphi\rangle_{L^2(\Omega)} = \lambda_1(m_i)
\int_\Omega  m_i u_{m_i} \varphi \, dx \quad \forall\, \varphi \in W_\sigma(\Omega)
\end{equation*} and, letting $i$ to $\infty$, we deduce $z=u_m$.
By ii) $\mu_1(m_i) u_{m_i}$     
weakly converges in $W_\sigma(\Omega)$ to $\mu_1(m)u_{m}$ and 
 $\|\mu_1(m_i)u_{m_i}\|_{W_\sigma(\Omega)}
=\mu_1(m_i)$ tends to $\mu_1(m) =\|\mu_1(m) u_{m}\|_{W_\sigma(\Omega)}$.
Hence $\mu_1(m_i) u_{m_i}$
strongly converges to $\mu_1(m)u_{m}$ in $W_\sigma(\Omega)$.
The last claim is immediate provided one observes that $\widetilde{\mu}_1(m)>0$ 
implies $\widetilde{\mu}_1(m_i)>0$ for all $i$ large enough.
\end{proof}

\begin{lemma}\label{teo2} Let $m, q, m_0\in L^\infty(\Omega)$ and   
$\widetilde{\mu}_1(m)$ be defined as in \eqref{muk} for $k=1$. Then\\
i) the map $m\mapsto \widetilde{\mu}_1(m)$     
 is convex on $L^\infty(\Omega) $; \\
ii) if $m$ and  $q$ are linearly independent and $ \widetilde{\mu}_1(m),  \widetilde{\mu}_1(q)>0$, then 
\begin{equation*} \widetilde{\mu}_1(tm+(1-t)q)< t \widetilde{\mu}_1(m)+(1-t)  \widetilde{\mu}_1(q)  
\end{equation*}
for all $0<t<1$;\\
iii) if $\int_\Omega m_0 \, dx >0$, then the map 
$m\mapsto \widetilde{ \mu}_1(m)$ 
 is strictly convex on $\overline{\mathcal{G}(m_0)}\, $;\\
 iv) if $\int_\Omega m_0\, dx \leq 0$ (and $m_0\not\equiv 0$), then the map $m\mapsto \widetilde{\mu}_1(m)$ is not
strictly convex on $\overline{\mathcal{G}(m_0)}$.       
\end{lemma}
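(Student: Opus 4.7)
The approach rests on expressing $\widetilde{\mu}_1$ as a supremum of affine functions of $m$. Combining Fischer's Principle \eqref{mu1} with Remark \ref{oss1} (which says the Rayleigh supremum is $\leq 0$ when $|\{m>0\}|=0$), I would observe that
$$\widetilde{\mu}_1(m) = \max\Bigl\{0,\ \sup_{f\in W_\sigma(\Omega)\setminus\{0\}}\frac{\int_\Omega m f^2\,dx}{\|f\|^2_{W_\sigma(\Omega)}}\Bigr\}$$
for every $m\in L^\infty(\Omega)$. Since each Rayleigh quotient is linear in $m$, the inner supremum is convex in $m$, and the outer max with the constant $0$ preserves convexity; this proves \textbf{i)}.

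For \textbf{ii)}, let $p=tm+(1-t)q$ with $t\in(0,1)$. If $\widetilde{\mu}_1(p)=0$ the strict inequality is immediate since the right-hand side is strictly positive. Otherwise, pick the positive normalized eigenfunction $u_p$ with $\|u_p\|_{W_\sigma(\Omega)}=1$, and use \eqref{normaliz2} to write
$$\widetilde{\mu}_1(p) = \int_\Omega p\,u_p^2\,dx = t\int_\Omega m\,u_p^2\,dx + (1-t)\int_\Omega q\,u_p^2\,dx \leq t\widetilde{\mu}_1(m) + (1-t)\widetilde{\mu}_1(q),$$
each integral being bounded by the Rayleigh supremum for the respective weight. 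Equality would force $u_p$ to attain the maximum in the Rayleigh quotients for both $m$ and $q$; since only $\mu_1$-eigenfunctions attain the supremum and $\mu_1$ is simple (Proposition \ref{simple}), $u_p=u_m=u_q$. Then $u_m>0$ solves simultaneously $-\Delta u_m=\lambda_1(m) m u_m$ and $-\Delta u_m=\lambda_1(q) q u_m$, so $\lambda_1(m) m = \lambda_1(q) q$ a.e.\ in $\Omega$, contradicting the linear independence of $m$ and $q$.

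For \textbf{iii)}, every pair of distinct $m,q\in\overline{\mathcal{G}(m_0)}$ shares the integral $\int_\Omega m_0\,dx>0$ by Corollary \ref{minore}(i), so $|\{m>0\}|,|\{q>0\}|>0$ and hence $\widetilde{\mu}_1(m),\widetilde{\mu}_1(q)>0$; Corollary \ref{minore}(ii) gives that $m,q$ are linearly independent, so ii) applies directly.

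For \textbf{iv)}, the strategy is to exhibit a nondegenerate segment in $\overline{\mathcal{G}(m_0)}$ on which $\widetilde{\mu}_1$ vanishes identically. Set $c=|\Omega|^{-1}\int_\Omega m_0\,dx\leq 0$; by Proposition \ref{prec2} and Proposition \ref{convexity}(i), $c\in\overline{\mathcal{G}(m_0)}$, and clearly $\widetilde{\mu}_1(c)=0$. If $|\{m_0>0\}|=0$, Proposition \ref{prec} forces every element of $\overline{\mathcal{G}(m_0)}$ to be nonpositive, so $\widetilde{\mu}_1\equiv 0$ on the whole class. If $\int_\Omega m_0\,dx=0$, then $c=0$ lies in $\overline{\mathcal{G}(m_0)}$ and the homogeneity recalled in Remark \ref{oss1} gives $\widetilde{\mu}_1(tm_0)=t\widetilde{\mu}_1(m_0)$, affine along the segment from $0$ to $m_0$. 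Finally, if $\int_\Omega m_0\,dx<0$ and $|\{m_0>0\}|>0$ (so $\esssup m_0>0$), I would set $t^{\ast}=-c/(\esssup m_0 - c)\in(0,1)$ and check that $tm_0+(1-t)c\leq 0$ a.e.\ for every $t\in[0,t^{\ast}]$, so that $\widetilde{\mu}_1$ vanishes identically on the nondegenerate segment joining $c$ and $t^{\ast}m_0+(1-t^{\ast})c$ (both in $\overline{\mathcal{G}(m_0)}$ by convexity). The main obstacle is this last subcase, where strict convexity has to be broken by manufacturing an honest interval of nonpositive weights inside $\overline{\mathcal{G}(m_0)}$ from the data $c$ and $m_0$; the boundedness of $m_0$ is what supplies the positive threshold $t^{\ast}$.
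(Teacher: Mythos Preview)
Your proposal is correct and follows essentially the same route as the paper: convexity via the supremum-of-affine-functionals structure of $\widetilde{\mu}_1$, strictness in ii) by forcing $u_p=u_m=u_q$ at equality and reading off linear dependence from the eigenvalue equations, iii) via Corollary~\ref{minore}, and iv) by producing a nondegenerate segment in $\overline{\mathcal{G}(m_0)}$ on which $\widetilde{\mu}_1$ is affine or identically zero. The only cosmetic differences are that the paper works with inequality~\eqref{basta} rather than your explicit $\max\{0,\sup\}$ formula, and in iv) it uses the single bound $\|m_0\|_{L^\infty(\Omega)}$ (which absorbs your extra subcase $|\{m_0>0\}|=0$) in place of your $\esssup m_0$.
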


\begin{proof}
i) The Fischer's Principle \eqref{Fischer1} and \eqref{tilde} both for $k=1$ yield
\begin{equation}\label{basta}
\sup_{f\in W_\sigma(\Omega) \atop f\neq 0}
\cfrac{\langle G_m(f),f\rangle_{W_\sigma(\Omega)}}{ \|f\|^2_{W_\sigma(\Omega)}}\,\leq \widetilde{ \mu}_1(m)  
\end{equation}
for every $m \in L^\infty(\Omega)$. Moreover, if $\widetilde{ \mu}_1(m)>0$, then the equality sign holds and  
the supremum is attained when $f$ is an eigenfunction of $\widetilde{\mu}_1(m)=\mu_1(m)$.
Let $m, q\in L^\infty(\Omega)$, $0\leq t\leq 1$. We show that  
\begin{equation}\label{conv}
\widetilde{\mu}_1(tm + (1-t)q)\leq t \widetilde{\mu}_1 (m) +(1-t) \widetilde{\mu}_1 (q).
\end{equation}
If $\widetilde{\mu}_1(tm + (1-t)q)=0$ then, \eqref{conv} is obvious. Suppose 
$\widetilde{\mu}_1(tm + (1-t)q)>0$. Then, for all $f\in W_\sigma(\Omega)$, $f\neq 0$, we 
have  
\begin{equation} \label{A1} 
\cfrac{\langle G_{tm+(1-t)q} (f),f\rangle_{W_\sigma(\Omega)}}{ \|f\|^2_{W_\sigma(\Omega)}}\,
=t \cfrac{\langle G_{m}(f),f\rangle_{W_\sigma(\Omega)}}{ \|f\|^2_{W_\sigma(\Omega)}
}\,+
(1-t)\cfrac{\langle G_{q}(f),f\rangle_{W_\sigma(\Omega)}}{ \|f\|^2_{W_\sigma(\Omega)}}\,
\leq\, t\widetilde{\mu}_1 (m) +(1-t) \widetilde{\mu}_1 (q),        
\end{equation}
where we used \eqref{linearit} and \eqref{basta} for $m$ and $q$. Taking the supremum in the left-hand term  of \eqref{A1}   
and using \eqref{basta} again with equality sign,  
we find \eqref{conv}.
  
ii) Arguing by contradiction, we suppose that 
equality holds in \eqref{conv}. We will conclude that $m$ and $q$ are linearly dependent. 
Equality sign in \eqref{conv} implies   
 $\widetilde{\mu}_1(tm + (1-t)q)>0$, then (by \eqref{basta}) the equality also holds in \eqref{A1} with 
$f=u=u_{tm+(1-t)q}$.  We get 
$$\cfrac{\langle G_{m} (u),u\rangle_{W_\sigma(\Omega)}}{ \|u\|^2_{W_\sigma(\Omega)}}\,
= \widetilde{\mu}_1(m) \quad \text{ and }\quad 
\cfrac{\langle G_{q} (u),u\rangle_{W_\sigma(\Omega)}}{ \|u\|^2_{W_\sigma(\Omega)}}\,
= \widetilde{\mu}_1(q).$$
 The simplicity 
of the principal eigenvalue, the positiveness of $u$ and the normalization \eqref{normaliz1} 
imply that    
$u=u_m=u_q$.  
By using  \eqref{falena} with $\lambda=\frac{1}{\tilde{\mu}_1(m)}$ and $\lambda=\frac{1}{\tilde{\mu}_1(q)}$ we have
$$ \langle  u, \varphi \rangle_{W_\sigma(\Omega)} =\cfrac{1}{ \widetilde{\mu}_1(m)}\, 
\langle m u, \varphi\rangle_{L^2(\Omega)} \quad \forall\, \varphi \in W_\sigma(\Omega) 
$$
and
$$\langle u, \varphi\rangle_{W_\sigma(\Omega)} =  \cfrac{1}{ \widetilde{\mu}_1(q)}\, 
\langle q u, \varphi\rangle_{L^2(\Omega)} \quad \forall\, \varphi\in W_\sigma(\Omega),$$
respectively.
Taking the difference of these identities we find 
$$\left\langle \left( \cfrac{m}{ \widetilde{\mu}_1(m)}\, -\cfrac{q}{ \widetilde{\mu}_1(q)}\, \right) u, \varphi\right\rangle_{L^2(\Omega)} =0 \quad \forall\, \varphi\in W_\sigma(\Omega), $$
which gives $m\widetilde{\mu}_1(q)-q\widetilde{\mu}_1(m)=0$, i.e.   
 $m$ and $q$ are linearly dependent.
 
 iii) First, note that, by i) of Corollary \ref{minore}, $\int_\Omega m\, dx=\int_\Omega m_0\, dx>0$ 
 for any $m \in \overline{\mathcal{G}(m_0)}$. Therefore, we have $|\{m>0\}| >0$ and thus
 $\widetilde{\mu}_1 (m)>0$
for all $m \in \overline{\mathcal{G}(m_0)}$. Next, recalling that  $\overline{\mathcal{G}
(m_0)}$ is convex (see ii) of Proposition \ref{convexity}), using ii) of Corollary 
\ref{minore} and the  statement ii), then  iii) follows.

iv) Applying Proposition \ref{prec2}, we deduce that the constant function $c=\frac{1}{|
\Omega|}\int_\Omega m_0\,dx$ is in $\overline{\mathcal{G}(m_0)}$. Then, $tm_0+(1-
t)c\in \overline{\mathcal{G}(m_0)}$ for every $t\in [0,1]$. We discuss
the two cases $\int_\Omega m_0\, dx=0$, $m_0\not\equiv 0$ and $\int_\Omega m_0\, dx<0$ separately.

If $\int_\Omega m_0\, dx=0$, $m_0\not\equiv 0$, then $c=0$. Thus, 
$tm_0\in \overline{\mathcal{G}(m_0)}$ for every $t\in [0,1]$ and, by Remark \ref{oss1}, we have 
$\widetilde{\mu}_1(tm_0)=t\widetilde{\mu}_1(m_0)$, which excludes strict convexity.     

We now turn to the case $\int_\Omega m_0\, dx<0$, i.e. $c<0$. From the inequality
$$tm_0+(1-t)c\leq t\|m_0\|_{L^\infty(\Omega)}+(1-t)c,$$
we obtain       
$$tm_0+(1-t)c\leq 0\text{ in }\Omega \quad \forall\, t\leq \frac{c}{c-\|m_0\|_{L^\infty(\Omega)}}.$$
Note that $c/\!\!\left(c-\|m_0\|_{L^\infty(\Omega)}\right)\in (0,1)$. 
Therefore, by \eqref{muk}, we conclude that $\widetilde{\mu}_1(m)=0$ for any 
$m$ in the closed line segment, contained in $\overline{\mathcal{G}(m_0)}$, that joins $c$ and $$\frac{c}{c-\|m_0\|_{L^\infty(\Omega)}}\,m_0+\left(1-\frac{c}{c-\|m_0\|_{L^\infty(\Omega)}}\right)c=\frac{\|m_0\|_{L^\infty(\Omega)}-m_0}{\|m_0\|_{L^\infty(\Omega)}-c}\,c.$$
This shows that the map $m\mapsto \widetilde{\mu}_1(m)$ is not
strictly convex also in this case.
\end{proof}

For the definitions and some basic results on the G\^ateaux differentiability
we refer the reader to \cite{ET}.

\begin{lemma}\label{teo3} 
Let $m\in L^\infty(\Omega)$,  $\widetilde{\mu}_1(m)$ be
 defined as in \eqref{muk} for $k=1$ and $u_m$ denote the relative unique positive 
eigenfunction of problem \eqref{p0} normalized as in \eqref{normaliz1}.
Then, the map $m\mapsto  \widetilde{\mu}_1(m)$ 
is G\^ateaux differentiable at any $m$ such that $ \widetilde{\mu}_1(m)>0$, 
with G\^ateaux differential equal to $u_m^2$. In other words,
 for every direction $v\in L^\infty(\Omega)$  
  we have  
\begin{equation}\label{gatto} \widetilde{\mu}_1'(m; v) =\int_\Omega u_m^2 v\, dx. \end{equation}
\end{lemma}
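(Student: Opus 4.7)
The plan is to exploit the variational characterization \eqref{mu1} of $\mu_1$ in both directions, sandwich the difference quotient between two expressions involving $u_m$ and $u_{m+tv}$, and then pass to the limit using the continuity result from Lemma \ref{teo1} iii).

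First, I would observe that since $\widetilde{\mu}_1(m)>0$, by the sequential weak$^*$ continuity of $\widetilde{\mu}_1$ (Lemma \ref{teo1} ii), applied to $m+tv\to m$ in norm, hence in weak$^*$), one has $\widetilde{\mu}_1(m+tv)>0$ for every $t$ with $|t|$ sufficiently small. Thus, in a neighborhood of zero, $\widetilde{\mu}_1(m+tv)=\mu_1(m+tv)$, and the corresponding positive eigenfunction $u_{m+tv}$, normalized by \eqref{normaliz1}, is well defined and satisfies \eqref{normaliz2}.

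Next, I would establish the two-sided inequalities by inserting the two natural test functions in \eqref{mu1}. Using $u_m$ as a test function for $\mu_1(m+tv)$ and recalling $\|u_m\|_{W_\sigma(\Omega)}=1$ together with \eqref{normaliz2}, one obtains
\begin{equation*}
\mu_1(m+tv)\;\geq\; \int_\Omega (m+tv)u_m^2\,dx \;=\; \mu_1(m) + t\int_\Omega v\,u_m^2\,dx.
\end{equation*}
Symmetrically, using $u_{m+tv}$ as a test function for $\mu_1(m)$ and rearranging,
\begin{equation*}
\mu_1(m+tv)\;\leq\; \mu_1(m) + t\int_\Omega v\,u_{m+tv}^2\,dx.
\end{equation*}
Dividing by $t$ and distinguishing the sign of $t$, for $t>0$ small one has
\begin{equation*}
\int_\Omega v\,u_m^2\,dx \;\leq\; \frac{\mu_1(m+tv)-\mu_1(m)}{t} \;\leq\; \int_\Omega v\,u_{m+tv}^2\,dx,
\end{equation*}
and the reverse chain of inequalities holds for $t<0$.

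Finally, I would pass to the limit $t\to 0$. By Lemma \ref{teo1} iii) applied to the sequence $m+tv$ (which converges to $m$ in the $L^\infty$-norm, hence weakly$^*$), $u_{m+tv}\to u_m$ strongly in $W_\sigma(\Omega)$, and therefore, by the compact embedding $W_\sigma(\Omega)\hookrightarrow L^2(\Omega)$ together with the identity $u_{m+tv}^2-u_m^2=(u_{m+tv}-u_m)(u_{m+tv}+u_m)$, one has $u_{m+tv}^2\to u_m^2$ in $L^1(\Omega)$. Since $v\in L^\infty(\Omega)$, this yields
\begin{equation*}
\int_\Omega v\,u_{m+tv}^2\,dx \;\longrightarrow\; \int_\Omega v\,u_m^2\,dx,
\end{equation*}
and the sandwich gives \eqref{gatto}. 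The only delicate step is the convergence of the eigenfunctions, but this is precisely what Lemma \ref{teo1} iii) delivers, so no further work is required.
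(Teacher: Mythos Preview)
Your proof is correct but takes a different route from the paper's. The paper does not use the Rayleigh quotient to sandwich the difference quotient; instead it uses the weak eigenvalue equations \eqref{falena} for both $u_m$ and $u_{m+tv}$, testing each against the other eigenfunction. By self-adjointness the cross term $\langle u_m,u_{m+tv}\rangle_{W_\sigma(\Omega)}$ cancels, yielding the \emph{exact} identity
\[
\frac{\widetilde{\mu}_1(m+tv)-\widetilde{\mu}_1(m)}{t}\int_\Omega m\,u_m u_{m+tv}\,dx
=\widetilde{\mu}_1(m)\int_\Omega v\,u_m u_{m+tv}\,dx,
\]
from which one passes to the limit using \eqref{normaliz2} and Lemma \ref{teo1} iii). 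Your variational sandwich is arguably more elementary and depends only on the max characterization \eqref{mu1}, without manipulating the equation; the paper's identity has the advantage of giving an exact formula before the limit, which could in principle yield sharper remainder or higher-order information. Both arguments rest on the same continuity input, Lemma \ref{teo1} iii).
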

 
\begin{proof}
Let us compute 
$$\lim_{t\to 0} \cfrac{ \widetilde{\mu}_1(m+ t v)- 
\widetilde{\mu}_1(m)}{t}\, .$$
Note that, by ii) of Lemma 
\ref{teo1}, $\widetilde{\mu}_1(m+ t v)$ converges to
$\widetilde{\mu}_1(m)>0$ as $t$ goes to zero for any $v\in L^\infty(\Omega)$. Therefore,  $\widetilde{\mu}_1(m+ t v)>0$ for $|t|$ small enough.  
The eigenfunctions $u_{m}$ and $u_{m+tv}$ satisfy (see \eqref{falena})  
$$\widetilde{\mu}_1(m)\langle u_m, \varphi\rangle_{W_\sigma(\Omega)} =
\langle m u_m, \varphi\rangle_{L^2(\Omega)} \quad \forall\, \varphi\in W_\sigma(\Omega) $$
and   
$$\widetilde{\mu}_1(m+ t v)\langle u_{m+tv}, \varphi\rangle_{W_\sigma(\Omega)} =
\langle (m+tv) u_{m+tv}, \varphi\rangle_{L^2(\Omega)} \quad \forall\, \varphi\in 
W_\sigma(\Omega). $$ 
By choosing $\varphi=u_{m+tv}$ in the former equation, $\varphi=u_m$ in the latter
and comparing we get     
\begin{equation*}
\widetilde{\mu}_1(m+ t v)
\langle m u_m, u_{m+tv}\rangle_{L^2(\Omega)}=
\widetilde{\mu}_1(m) 
\langle (m+tv) u_{m+tv}, u_m\rangle_{L^2(\Omega)}.
\end{equation*}
Rearranging we find
\begin{equation}\label{rap} \cfrac{\widetilde{\mu}_1(m+ t v)- 
\widetilde{\mu}_1(m)}{t}\, \int_\Omega m \, u_m
u_{m +tv}\, dx =\widetilde{\mu}_1(m) \int_\Omega  u_m
u_{m +tv} v\, dx .\end{equation}
If $t$ goes to zero,  
then by iii) of Lemma \ref{teo1} it follows that  
$u_{m+ t v}$ converges to $u_m$ in $W_\sigma(\Omega)$ and therefore
in $L^2(\Omega)$. Passing to the limit      
in \eqref{rap} and using \eqref{normaliz2} we conclude
\begin{equation*}
\lim_{t\to 0} \cfrac{\widetilde{\mu}_1(m+ t v)- \widetilde{\mu}_1(m)}{t}\, =
\int_\Omega u_m^2 v\, dx,
\end{equation*}
i.e. \eqref{gatto} holds.
\end{proof}

\section{Optimization of $\lambda_1(m)$}\label{opt}
This section is devoted to the study of the optimization of $\lambda_1(m)$. For this purpose, 
we will use some qualitative properties of $\mu_1(m)=1/\lambda_1(m)$ with respect to $m$, proved in the 
previous section.

\begin{theorem}\label{lemmafond}
Let $m_0\in L^\infty(\Omega)$, $\overline{\mathcal{G}(m_0)}$ be the weak* closure in $L^\infty(\Omega)$ of 
the class of rearrangements $\mathcal{G}(m_0)$ introduced in Definition \ref{class} and $\widetilde{\mu}_
1(m)$ defined as in \eqref{muk} for $k=1$. Then \\ 
i) there exists a solution of the problem 
\begin{equation}\label{infclos}
\max_{m\in\overline{ \mathcal{G}(m_0)}}\tilde{\mu}_1(m);  
\end{equation} 
ii) if $|\{m_0>0\}|>0$ (note that, in this case, by Proposition \ref{segnorho}
$\tilde\mu_1(\check m_1)=\mu_1(\check m_1)>0$), then\\
\phantom{a} a) any solution $\check m_1$ of \eqref{infclos} belongs to $\mathcal{G}(m_0)$, more 
explicitly, we have $  \tilde\mu_1(m)<\tilde\mu_1(\check m_1)$ for\\ \phantom{a} all $m\in\overline{\mathcal{G}(m_0)}
\smallsetminus
\mathcal{G}(m_0)$; \\     
\phantom{a} b) for every solution $\check{m}_1\in\mathcal{G}(m_0)$ of \eqref{infclos}
 there exists an increasing function $\psi$ such  
that           
\begin{equation*}\label{carat}\check{m}_1= \psi(u_{\check{m}_1}) \quad \text{a.e. in }\Omega, \end{equation*} \phantom{a} where 
$u_{\check{m}_1}$ is 
the positive eigenfunction relative to $\mu_1(\check{m}_1)$ normalized 
as in \eqref{normaliz1}.    
\end{theorem}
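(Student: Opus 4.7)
For part (i) I would use a Weierstrass-type argument: Proposition \ref{cane} gives the sequential weak* compactness of $\overline{\mathcal{G}(m_0)}$ in $L^\infty(\Omega)$, and Lemma \ref{teo1}(ii) gives the sequential weak* continuity of $m\mapsto\tilde\mu_1(m)$, so the supremum in \eqref{infclos} is attained.

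For part (ii), let $\check m_1$ be any maximizer. Since $m_0\in\mathcal{G}(m_0)\subseteq\overline{\mathcal{G}(m_0)}$ and $|\{m_0>0\}|>0$, Proposition \ref{segnorho} yields $\tilde\mu_1(m_0)=\mu_1(m_0)>0$, hence $\tilde\mu_1(\check m_1)=\mu_1(\check m_1)>0$ and the positive eigenfunction $u_{\check m_1}$ is well defined. Because $\overline{\mathcal{G}(m_0)}$ is convex (Proposition \ref{convexity}(ii)) and $\tilde\mu_1$ is G\^ateaux differentiable at $\check m_1$ with differential $u_{\check m_1}^2$ (Lemma \ref{teo3}), the optimality of $\check m_1$ yields the first-order condition
\begin{equation*}
\int_\Omega u_{\check m_1}^2\,(m-\check m_1)\,dx\le 0 \qquad \forall\, m\in\overline{\mathcal{G}(m_0)},
\end{equation*}
so $\check m_1$ solves the linear maximization problem $\max_{m\in\overline{\mathcal{G}(m_0)}}\int_\Omega u_{\check m_1}^2\,m\,dx$.

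At this point I would invoke the rearrangement machinery of Section \ref{rearrangements} applied with weight $u_{\check m_1}^2$. Since $u_{\check m_1}>0$, the hypothesis ``every level set of $u_{\check m_1}^2$ has zero measure'' is equivalent to the same property for $u_{\check m_1}$. Once this is established, Proposition \ref{Teobart89} yields an increasing $\phi$ with $\phi\circ u_{\check m_1}^2\in\mathcal{G}(m_0)$, and Proposition \ref{Teobart87bis} shows that $\phi\circ u_{\check m_1}^2$ is the \emph{unique} maximizer of the linear problem on $\overline{\mathcal{G}(m_0)}$. Comparing with the first-order condition forces $\check m_1=\phi\circ u_{\check m_1}^2\in\mathcal{G}(m_0)$, proving (ii)(a); setting $\psi(t):=\phi(t^2)$ on $(0,\infty)$ then produces the increasing function in (ii)(b), since $\psi(u_{\check m_1})=\phi(u_{\check m_1}^2)=\check m_1$ a.e.

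The main obstacle is proving the negligibility of the level sets $\{u_{\check m_1}=c\}$ for $c>0$, which is not a consequence of the $C^{1,\beta}$ regularity of $u_{\check m_1}$ alone and must be extracted from the eigenvalue equation. A natural first step is Stampacchia's theorem, which gives $\nabla u_{\check m_1}=0$ and $\Delta u_{\check m_1}=0$ a.e.\ on any level set, so the PDE $-\Delta u_{\check m_1}=\lambda_1(\check m_1)\check m_1 u_{\check m_1}$ forces $\check m_1\equiv 0$ on any positive-measure level set at height $c>0$. The remaining task---ruling out such plateaux by exploiting the joint constraints coming from the first-order condition and the rearrangement constraint $\check m_1\in\overline{\mathcal{G}(m_0)}$---is the delicate point I expect to absorb the most work; once it is handled, the chain Proposition \ref{Teobart89}~$\to$~Proposition \ref{Teobart87bis} described above closes the argument.
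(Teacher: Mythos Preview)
Your argument for (i) matches the paper's. For (ii), however, there is a genuine gap: you reduce everything to the claim that the level sets $\{u_{\check m_1}=c\}$ are Lebesgue-null, and you acknowledge that you have not proved this. The partial step you give (Stampacchia forces $\check m_1=0$ a.e.\ on any plateau) does not close the argument. At this stage you only know $\check m_1\in\overline{\mathcal{G}(m_0)}$, so there is no distributional obstruction to $\check m_1$ vanishing on a set of positive measure; and the first-order condition cannot help either, since on a plateau $u_{\check m_1}^2$ is constant and any redistribution of the values of $\check m_1$ within the plateau leaves $\int m\,u_{\check m_1}^2$ unchanged. In short, neither of the ``joint constraints'' you invoke actually rules out plateaux, and I do not see how to salvage this route without importing the paper's ideas.

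The paper avoids the level-set question entirely by reversing the order of the two conclusions and using convexity more aggressively. For (ii)(a) it argues directly: if $\check m_1\notin\mathcal{G}(m_0)$, then by Proposition~\ref{convexity}(iii) it is not an extreme point of $\overline{\mathcal{G}(m_0)}$, so $\check m_1=\tfrac12(m+q)$ with $m\neq q$; convexity of $\tilde\mu_1$ (Lemma~\ref{teo2}(i)) forces $m,q$ to be maximizers too, hence $\tilde\mu_1(m),\tilde\mu_1(q)>0$; then Lemma~\ref{teo2}(ii) makes $m,q$ linearly dependent, and a short homogeneity computation (Remark~\ref{oss1}) yields the contradiction $m=q$. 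For (ii)(b) the paper upgrades your weak first-order inequality to the \emph{strict} inequality $\int \check m_1 u_{\check m_1}^2>\int m\,u_{\check m_1}^2$ for every $m\neq\check m_1$ in $\overline{\mathcal{G}(m_0)}$, via a case analysis combining the tangent inequality from convexity/G\^ateaux differentiability with (ii)(a) and the strict midpoint inequality of Lemma~\ref{teo2}(ii). This strict inequality says $\check m_1$ is the \emph{unique} linear maximizer, and Proposition~\ref{Teobart87} (uniqueness $\Rightarrow$ $\check m_1=\psi\circ u_{\check m_1}^2$) finishes---no appeal to Propositions~\ref{Teobart89} or~\ref{Teobart87bis}, and no need to analyse level sets.
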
    

\begin{proof}    
i) By iii) of Proposition \ref{cane} and ii) of Lemma \ref{teo1}, $\overline{\mathcal{G}(m_0)}$ 
is sequentially weakly* compact and the map $m \mapsto \widetilde{\mu}_1(m)$
is sequentially weakly* continuous, respectively. Therefore, there exists $\check{m}_1\in 
\overline{\mathcal{G}(m_0)}$ such that 
\begin{equation*}  
\widetilde{\mu}_1(\check{m}_1)=\max_{m\in \overline{\mathcal{G}(m_0)}}\widetilde{ \mu}_1(m) .
\end{equation*} 
ii) a) Note that, by Proposition \ref{segnorho}, 
the condition $|\{m_0>0\}|>0$ guarantees $\widetilde{\mu}_1(m)>0$ on 
$\mathcal{G}(m_0)$ and then $\widetilde{\mu}_1(\check{m}_1)>0$.   
Let $\check{m}_1$ be an arbitrary solution of \eqref{infclos}, let us show
 that $\check{m}_1$ actually belongs to $\mathcal{G}(m_0)$. 
 Proceeding by contradiction,  
 suppose that $\check{m}_1\not \in\mathcal{G}(m_0) $. Then, by iii) of Proposition
 \ref{convexity} and by Definition \ref{libellula}, $\check{m}_1$ is not an extreme point of $\overline{\mathcal{G}(m_0)}$
 and thus there exist $m, q \in \overline{\mathcal{G}(m_0)}$ such that $m\neq q$
 and $\check{m}_1= \frac{m + q}{2}\, $.
 By i) of Lemma \ref{teo2} and, from the fact that $\check{m}_1$ a maximizer of \eqref{infclos},
 we have 
$$\widetilde{\mu}_1(\check{m}_1)\leq   
 \cfrac{\widetilde{\mu}_1(m) +\widetilde{\mu}_1(q)}{2}\,
\leq \widetilde{\mu}_1(\check{m}_1)$$ and
then, equality signs hold. This implies  
$\widetilde{\mu}_1(m) =\widetilde{\mu}_1(q)=\widetilde{\mu}_1(\check{m}_1)>0$,   
that is $m$ and $q$ are maximizers as well. Now, applying ii) of Lemma \ref{teo2}
to $m$ and $q$ with $t=\frac{1}{2}\, $, we conclude that $m$ and $q$ are linearly   
dependent. Without loss of generality, we can assume that there exists $\alpha \in \mathbb{R}$ such 
that $q =\alpha m$, moreover $\alpha$ is nonzero since $q$ is a maximizer. Combining
 $q =\alpha m$ with $\check{m}_1= \frac{m + q}{2}\, $ we get 
 $\check{m}_1= \frac{1+\alpha}{2}\, m=\frac{1+\alpha}{2\alpha}\, q$. It is immediate
 to show that at least one of the coefficients $\frac{1+\alpha}{2}\,$ and $\frac{1+\alpha}{2\alpha}\,$
 must be nonnegative. In either cases we find a contradiction. For instance, if $\frac{1+\alpha}{2\alpha}\,\geq 0$, by Remark \ref{oss1} and maximality of $q$ we obtain 
 $$\widetilde{\mu}_1(\check{m}_1)= \cfrac{1+\alpha}{2\alpha}\,\widetilde{\mu}_1 (q)=
 \cfrac{1+\alpha}{2\alpha}\,\widetilde{\mu}_1 (\check{m}_1),$$
 which implies $\alpha=1$ and yields the contradiction $m=q$. The other case is analogous.\\
Thus, we conclude that $\check{m}_1\in \mathcal{G}(m_0)$ and a) of ii) is proved.\\      
ii) b) Let $\check{m}_1\in \mathcal{G}(m_0)$ a solution of \eqref{infclos}. We prove the claim by using Proposition \ref{Teobart87}; more precisely, we show that   
\begin{equation}\label{silvia}        
\int_\Omega 
\check{m}_1 u_{\check{m}_1}^2\, dx> \int_\Omega 
m\, u_{\check{m}_1}^2\, dx\quad\forall\, m\in \overline{\mathcal{G}(m_0)}\smallsetminus \{\check{m}_1\}.
\end{equation} 
By exploiting the convexity of $\widetilde{\mu}_1(m)$ (see Lemma \ref{teo2}) and its 
G\^ateaux differentiability in $\check{m}_1$ (see Lemma \ref{teo3})
 we have (for details see \cite{ET})
 \begin{equation} \label{maria}
\widetilde{\mu}_1(m)\geq     
\widetilde{\mu}_1\big(\check{m}_1)
+\int_\Omega (m-
\check{m}_1) u_{\check{m}_1}^2\, dx
\end{equation} 
for all $m\in \overline{\mathcal{G}(m_0)}$.
  
First, let us suppose $\widetilde{\mu}_1(m)< \widetilde{\mu}_1(\check{m}_1)$.
Comparing with \eqref{maria} we find  
\begin{equation*}
\int_\Omega 
(m-\check{m}_1) u_{\check{m}_1}^2\, dx<0  ,\end{equation*}
that is \eqref{silvia}.\\    
Next, let us consider the case $\widetilde{\mu}_1(m)=\widetilde{\mu}_1(\check{m}_1)$,
$m\in \overline{\mathcal{G}(m_0)}\smallsetminus \{\check{m}_1\}$. By a) of ii) there are not maximizers of $\widetilde{\mu}_1$ in $\overline{\mathcal{G}(m_0)}\smallsetminus \mathcal{G}(m_0)$, therefore $m \in \mathcal{G}(m_0)$.    
If $\check{m}_1$ and $m$ are linearly independent, then, ii) of Lemma \ref{teo2}
implies 
$$ \widetilde{\mu}_1 \left(\frac{\check{m}_1 + m}{2} \right)<\frac{\widetilde{\mu}_1(
\check{m}_1) +\widetilde{\mu}_1(m)}{2}\, = \widetilde{\mu}_1(
\check{m}_1).$$
Then, as in the previous step, \eqref{maria} with $\frac{\check{m}_1 + m}{2}\,$ in place of 
$m$ yields \eqref{silvia}.\\
Finally, let $\check{m}_1$ and $m$ be linearly dependent. Being $\check{m}_1$ and 
$m$ both nonzero, we can assume $m=\alpha \check{m}_1$
for a constant $\alpha\in \mathbb{R}$. Therefore $|m|= |\alpha|\,  |\check{m}_1|$. Now, 
by i) and ii) of Proposition \ref{rospo}, the functions $|m|$ and $| \check{m}_1|$
are equimeasurable and $\esssup |m|= \esssup |\check{m}_1|>0$.
This leads to $|\alpha|=1$ and, being $m$ and $\check{m}_1$ distinct, $\alpha=-1$. Thus $m=
-\check{m}_1$, which by \eqref{normaliz2} gives 
$$\int_\Omega 
m\,  u_{\check{m}_1}^2\, dx=-\int_\Omega \check{m}_1 
u_{\check{m}_1}^2\, dx = -  \widetilde{\mu}_1(\check{m}_1)<  \widetilde{\mu}_1(
\check{m}_1) = \int_\Omega \check{m}_1
\,  u_{\check{m}_1}^2\, dx, $$
i.e. \eqref{silvia}.
This completes the proof.
\end{proof}  

{\begin{remark} If $m_0$ satisfies the stronger condition $\int_\Omega m_0\, dx >0$, then the proof of part ii) 
simplifies as one can rely on iii) of Lemma \ref{teo2} (strict convexity of $m \mapsto 
\widetilde{\mu}_1(m)$ in $\overline{\mathcal{G}(m_0)}$). Indeed, if $\check{m}_1=\frac{m+q}{2}\, $, $m,q\in\overline{\mathcal{G}(m_0)}$, $m\neq q$, 
it follows immediately the contradiction  
$$ \widetilde{\mu}_1(\check{m}_1)< \cfrac{\widetilde{\mu}_1(m)
 +\widetilde{\mu}_1(q)}{2}\, \leq  \widetilde{\mu}_1(\check{m}_1).$$
Further, note that in this case $\lambda_1(m)=1/\mu_1(m)$ is well defined for all $m\in 
\overline{\mathcal{G}(m_0)}$ (it follows by i) of Corollary \ref{minore} and Proposition \ref{segnorho}).\\
\end{remark}

\begin{theorem}\label{lemmafondmax} 
Let $m_0\in L^\infty(\Omega)$, $\overline{\mathcal{G}(m_0)}$ be the weak* closure in $L^\infty(\Omega)$ of 
the class of rearrangements $\mathcal{G}(m_0)$ introduced in Definition \ref{class} and $\widetilde{\mu}
_1(m)$  defined as in \eqref{muk} for $k=1$. Then \\ 
i) there exists a solution of the problem 
\begin{equation}\label{infclos02}
\min_{m\in\overline{ \mathcal{G}(m_0)}}\tilde{\mu}_1(m);  
\end{equation}
ii) if $\int_\Omega m_0\;dx\leq 0$, then
\begin{equation*}  
\min_{m\in\overline{ \mathcal{G}(m_0)}}\tilde{\mu}_1(m)=\inf_{m\in{\mathcal{G}(m_0)}} \tilde\mu_1(m)=0
\end{equation*}
\phantom{ii)}and the constant weight $c=\frac{1}{|\Omega|}\int_\Omega m_0\;dx$ is a minimizer in 
$\overline{\mathcal{G}(m_0)}$;\\
iii) if $\int_\Omega m_0\;dx>0$, then \\
\phantom{aa} a) there is a unique solution $\hat{m}_1$ of the problem \eqref{infclos02} and, 
additionally, $\tilde{\mu}_1(\hat{m}_1)>0$;\\ 
\phantom{aa} b) in the case of Dirichlet boundary conditions,  $\hat{m}_1\geq 0$ a.e. 
in $\Omega$. Moreover, if $m_0\geq 0$ a.e. \phantom{aa} in $\Omega$, then $\hat{m}
_1^*=m_0^*$, i.e. $\hat{m}_1\in \mathcal{G}(m_0)$; otherwise, let  $\gamma\in(0,|\Omega|)$ such 
that $\int_\gamma^{|\Omega|}m_0^*\;ds=0$, \phantom{aa}  then
$$\hat{m}_1^*(s)=
\begin{cases}
m_0^*(s)\quad & \text{if }\ 0<s<\gamma\\
0 & \text{if }\ \gamma\leq s<|\Omega|;
\end{cases}
$$
\phantom{aa} c) in the case of Robin boundary conditions, if $m_0\geq 0$ a.e. in $\Omega$, then $\hat{m}_1\in \mathcal{G}(m_0)$;\\
\phantom{aa} d) for both boundary conditions, there exists a decreasing function $\psi$ such
that      
\begin{equation*}
\hat{m}_1= \psi(u_{\hat{m}_1}) \quad \text{a.e. in }\Omega, 
\end{equation*} 
\phantom{aa} where 
$u_{\hat{m}_1}$ is 
the positive eigenfunction relative to $\mu_1(\hat{m}_1)$ normalized 
as in \eqref{normaliz1}.
\end{theorem}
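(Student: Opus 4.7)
My plan is to handle the six assertions in increasing order of difficulty: the compactness/continuity parts first, then the first-order characterization, and finally the boundary-condition-specific structural claims.

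\textbf{Parts (i), (ii), (iii-a).} In (i), the sequential weak$^*$ compactness of $\overline{\mathcal{G}(m_0)}$ from Proposition \ref{cane} combined with the sequential weak$^*$ continuity of $\tilde\mu_1$ from Lemma \ref{teo1}(ii) yields a minimizer. For (ii), the constant $c=\frac{1}{|\Omega|}\int_\Omega m_0\,dx\le0$ belongs to $\overline{\mathcal{G}(m_0)}$ by Proposition \ref{prec2}, and since $|\{c>0\}|=0$ the definition \eqref{muk} gives $\tilde\mu_1(c)=0$; as $\tilde\mu_1\ge0$ (from the Rayleigh characterization \eqref{mu1}), $c$ is a minimizer over the closure. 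The infimum over $\mathcal{G}(m_0)$ is also $0$, obtained by approximating $c$ by a sequence in $\mathcal{G}(m_0)$ (weak$^*$ density in the closure) and using Lemma \ref{teo1}(ii). For (iii-a), Corollary \ref{minore}(i) forces $\int_\Omega \hat m_1\,dx=\int_\Omega m_0\,dx>0$, so $|\{\hat m_1>0\}|>0$ and $\tilde\mu_1(\hat m_1)=\mu_1(\hat m_1)>0$; uniqueness is then immediate from the strict convexity of $\tilde\mu_1$ on $\overline{\mathcal{G}(m_0)}$ proved in Lemma \ref{teo2}(iii).

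\textbf{Part (iii-d): first-order characterization.} The Gâteaux differential of $\tilde\mu_1$ at $\hat m_1$ is $u_{\hat m_1}^2$ by Lemma \ref{teo3}, so convexity of $\tilde\mu_1$ combined with the subgradient inequality gives, for every $m\in\overline{\mathcal{G}(m_0)}$,
\[ \tilde\mu_1(m)\ge\tilde\mu_1(\hat m_1)+\int_\Omega (m-\hat m_1)\,u_{\hat m_1}^2\,dx. \]
Minimality of $\hat m_1$ thus yields $\int_\Omega m\,u_{\hat m_1}^2\,dx\ge\int_\Omega \hat m_1\,u_{\hat m_1}^2\,dx$ for every $m\in\overline{\mathcal{G}(m_0)}$, i.e. $\hat m_1$ minimizes the linear functional $m\mapsto\int_\Omega m\,u_{\hat m_1}^2\,dx$ over $\overline{\mathcal{G}(m_0)}$. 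The decreasing counterparts (Remark \ref{ossburt}) of Propositions \ref{Teobart89} and \ref{Teobart87bis}, applied to a suitable target in $\overline{\mathcal{G}(m_0)}$, then produce a decreasing $\psi$ with $\psi\circ u_{\hat m_1}$ equimeasurable with that target and uniquely minimizing the linear functional; uniqueness forces $\hat m_1=\psi(u_{\hat m_1})$ a.e. The applicability of Proposition \ref{Teobart89} rests on verifying that the level sets of $u_{\hat m_1}$ are null where $\hat m_1>0$, which follows from the $C^{1,\beta}$ regularity of $u_{\hat m_1}$ and the fact that there $u_{\hat m_1}$ solves a genuine eigenvalue equation.

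\textbf{Parts (iii-b) and (iii-c): the main obstacle.} The Dirichlet nonnegativity in (iii-b) is where the geometry of the boundary data really enters. My approach is to compete $\hat m_1$ against the nonnegative function $\overline m_0\in\overline{\mathcal{G}(m_0)}$ constructed in Proposition \ref{burtgen}(ii): minimality of $\hat m_1$ gives $\tilde\mu_1(\hat m_1)\le\tilde\mu_1(\overline m_0)$, and the reverse inequality — which together with the uniqueness from Lemma \ref{teo2}(iii) will force $\hat m_1=\overline m_0\ge0$ — is the hard step. The key Dirichlet ingredient is that $u_{\hat m_1}$ vanishes on $\partial\Omega$, so truncations $v=\min(u_{\hat m_1},t)$ remain in $H^1_0(\Omega)$ and may be used as trial functions in the variational characterization \eqref{mu1}; by the monotone characterization $\hat m_1=\psi(u_{\hat m_1})$ from (iii-d) with $\psi$ decreasing, the would-be negative part of $\hat m_1$ lives exactly on the super-level set where the truncation cuts, and a careful bookkeeping of the Rayleigh quotients $\int\hat m_1 v^2/\|v\|^2$ versus $\int\overline m_0 u_{\hat m_1}^2/\|u_{\hat m_1}\|^2$ produces the needed inequality. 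Once $\hat m_1\ge0$, the prescribed form of $\hat m_1^*$ follows from $\hat m_1\prec m_0$, the equal-integral condition, and the explicit description of $\overline m_0^*$ in Proposition \ref{burtgen}(ii). For Robin (iii-c), truncation is unavailable, but when $m_0\ge0$ the decreasing-$\psi$ characterization of (iii-d) can be invoked directly with $m_0$ as target in Proposition \ref{Teobart89}; uniqueness from Proposition \ref{Teobart87bis} then identifies $\hat m_1$ with the unique rearrangement of $m_0$ that is a decreasing function of $u_{\hat m_1}$, so $\hat m_1\in\mathcal{G}(m_0)$.
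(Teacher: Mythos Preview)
Your treatment of (i), (ii), and (iii-a) is correct and essentially identical to the paper's. The first-order variational inequality you derive at the start of your (iii-d) paragraph --- that $\hat m_1$ minimizes $m\mapsto\int_\Omega m\,u_{\hat m_1}^2\,dx$ over $\overline{\mathcal G(m_0)}$ --- is also exactly the paper's Step~1.

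From there, however, your argument breaks down. First, there is a circularity: your proof of (iii-d) appeals to Propositions~\ref{Teobart89} and \ref{Teobart87bis} ``applied to a suitable target in $\overline{\mathcal G(m_0)}$'', but you never say what that target is. In fact the right target is $\overline m_0$ (or $m_0$ itself when $m_0\ge0$), and to know that $\hat m_1$ lies in $\overline{\mathcal G(\overline m_0)}$ you must already know $\hat m_1\ge0$ --- precisely the content of (iii-b). You then invoke (iii-d) inside your argument for (iii-b), closing the loop. Moreover, Proposition~\ref{Teobart89} needs the level sets of $u_{\hat m_1}$ to be null, and you only verify this on $\{\hat m_1>0\}$; on $\{\hat m_1\le0\}$ it is in fact false (see below).

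Second, and more seriously, your strategy for (iii-b) cannot work. You propose to show the ``reverse inequality'' $\tilde\mu_1(\overline m_0)\le\tilde\mu_1(\hat m_1)$ and conclude by uniqueness that $\hat m_1=\overline m_0$. But the theorem asserts only $\hat m_1^*=\overline m_0^*$, not $\hat m_1=\overline m_0$; the function $\overline m_0$ in Proposition~\ref{burtgen}(ii) depends on an arbitrary choice of the set $E$, and there is no reason it should coincide with $\hat m_1$. By the very uniqueness you invoke, whenever $\overline m_0\neq\hat m_1$ one has $\tilde\mu_1(\overline m_0)>\tilde\mu_1(\hat m_1)$ strictly, so the inequality you set out to prove is false in general. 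The truncation idea therefore has no chance of yielding it.

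The paper proceeds in the opposite order: it proves $\hat m_1\ge0$ \emph{first}, and (iii-d) falls out afterward. The key missing ingredients are a measure-preserving swap argument (Step~2), which uses the linear optimality condition to show that $u_{\hat m_1}^2\ge p:=\esssup_{\{\hat m_1>0\}}u_{\hat m_1}^2$ throughout $\{\hat m_1\le0\}$, followed by an interior maximum-principle argument (Step~3) on the set $\{u_{\hat m_1}>\sqrt p\}$ showing that in fact $u_{\hat m_1}\equiv\sqrt p$ on $\{\hat m_1\le0\}$; the equation $-\Delta u_{\hat m_1}=\lambda_1\hat m_1 u_{\hat m_1}$ a.e.\ then forces $|\{\hat m_1<0\}|=0$. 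Only after this does one build $\psi$ on the set $P=\{\hat m_1>0\}=\{u_{\hat m_1}<\sqrt p\}$ (where level sets are genuinely null) and extend it by zero, obtaining (iii-d) and the explicit form of $\hat m_1^*$ simultaneously. For Robin with $m_0\ge0$, Proposition~\ref{prec} gives $\hat m_1\ge0$ for free, and the same Steps~2 and~4 apply.
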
  

\begin{proof} 
i) We repeat the reasoning used to prove i) of Theorem \ref{lemmafond}.\\
ii) Note that, by Proposition \ref{prec2} and i) of Proposition \ref{convexity},
the nonpositive constant function $c=\frac{1}{|\Omega|}\, \int_\Omega m_0 \, dx$ belongs to 
$\overline{\mathcal{G}(m_0)}$. Therefore, by definition of $\widetilde{\mu}_1(m)$, 
$\min_{m \in \overline{\mathcal{G}(m_0)}} \widetilde{\mu}_1(m)=0$. Then, being $
\mathcal{G}(m_0)$ dense in $\overline{\mathcal{G}(m_0)}$ and 
$\widetilde{\mu}_1(m)$
sequentially weak* continuous, it follows that
$\inf_{m \in \mathcal{G}(m_0)} \widetilde{\mu}_1(m)=0$.\\
iii) a) Observe that, being $\int_\Omega m_0\;dx>0$,  by i) of Corollary \ref{minore}, we have $|\{m>0\}|>0$ for all $m\in\overline{\mathcal{G}(m_0)}$.  Hence, by Proposition 
\ref{segnorho}, $\widetilde{\mu}_1(m)>0$ (and thus $\widetilde{\mu}_1(m)=\mu_1(m)$) for all $m\in\overline{\mathcal{G}(m_0)}$. 
Moreover, by iii) of Lemma \ref{teo2}, the map $m 
\mapsto\widetilde{\mu}_1(m)$ is  strictly convex. Therefore, there 
exists a unique $\hat{m}_1\in 
\overline{\mathcal{G}(m_0)}$ such that 
\begin{equation}\label{minhat}
\tilde\mu_1(\hat{m}_1)=\min_{m\in \overline{\mathcal{G}(m_0)}}\tilde\mu_1(m)>0.
\end{equation}
For the convenience of the reader, we split the proof of iii) into some steps.

iii) b) \\
\emph{Step 1.} Let $m\in \overline{\mathcal{G}(m_0)}$, by \eqref{minhat}  we have (recall that 
$\overline{\mathcal{G}(m_0)}$ is convex)
\begin{equation*}
\cfrac{\tilde\mu_1(\hat{m}_1+ t (m-\hat{m}_1))-\tilde\mu_1(\hat{m}_1)}{t}>0
\end{equation*}   
for all $t\in (0, 1)$.     
Passing to the limit for $t\to 0$ and by virtue of Lemma   \ref{teo3}  we find
\begin{equation}\label{linprob}
 \int_\Omega \hat{m}_1 u^2_{\hat{m}_1}\, dx \leq \int_\Omega m u^2_{\hat{m}_1}\, dx     
\quad \forall\, m \in\overline{\mathcal{G}(m_0)}.    
\end{equation} 
\emph{Step 2.}
We follow the idea in \cite{BML}. Let us introduce the sets 
$$P=\{x\in\Omega: \hat{m}_1(x)>0\},\quad N=\{x\in\Omega: \hat{m}_1(x)<0\},\quad Z=
\{x\in\Omega: \hat{m}_1(x)=0\}$$     
and show that $N$ has zero measure.
Define $p=\esssup_P u^2_{\hat{m}_1}$. We prove by contradiction that 
$u^2_{\hat{m}_1}\geq p$ a.e. in $N\cup  Z$. Suppose this is false, then there exist a
subset $A\subset N\cup Z$  of
positive measure and $\epsilon>0$ such that $u^2_{\hat m_1}<p-2\epsilon$ in $A$.
Let $B$ be a subset of $P$  of positive measure such that  $u^2_{\hat m_1}>p-\epsilon$
in $B$.
 Without loss of generality, we can assume $|B|=|A|$.   Let $\pi: A\to B$ be a measure preserving bijection, i.e. a bijective map such that for every set $E\subset A$,  $E$ is measurable if and only if $\pi(E)$ is measurable and in that case $|\pi(E)|=|E|$ (for more  details and the proof of
 the existence of such a map we refer the reader to \cite{B87,B89} and references 
 therein).
 We define the weight
 \begin{equation*}
  \tilde m=
  \begin{cases}
     \hat m_1(\pi(x))\quad &\text{if } x\in A\\
     \hat m_1(\pi^{-1}(x))&\text{if } x\in B\\
     \hat m_1(x)                &\text{if } x\in \Omega\smallsetminus(A\cup B).
 \end{cases}
\end{equation*}
Note that $\tilde m\sim \hat m_1\prec m_0$, i.e. $\tilde m\in\overline{\mathcal{G}(m_0)}$. 
We can write
\begin{equation}\label{assu}
\begin{split}
 \int_\Omega \hat{m}_1 u^2_{\hat{m}_1}\, dx - \int_\Omega \tilde m u^2_{\hat{m}_1}\, dx
 & = \int_{A\cup B} (\hat{m}_1-\tilde m) u^2_{\hat{m}_1}\, dx\\
 &= \int_A (\hat{m}_1-\tilde m) 
 u^2_{\hat{m}_1}\, dx +  \int_B (\hat{m}_1-\tilde m) u^2_{\hat{m}_1}\, dx\\
 &=  \int_A (\hat{m}_1-\tilde m) 
 u^2_{\hat{m}_1}\, dx +  \int_A (\tilde m-\hat{m}_1) (u^2_{\hat{m}_1}\circ\pi)\, dx\\
 &= \int_A (\tilde m-\hat{m}_1)(u^2_{\hat{m}_1}\circ\pi-u^2_{\hat{m}_1})\, dx\\
 &\geq\epsilon \int_A (\tilde m-\hat{m}_1)\, dx>0,
\end{split}                                   
\end{equation}   
where we used the fact that $(\hat{m}_1-\tilde m) u^2_{\hat{m}_1}$  restricted to $B$ and 
$(\tilde m-\hat{m}_1) (u^2_{\hat{m}_1}\circ\pi)$ restricted to $A$ have the same decreasing rearrangement and then, by Proposition \ref{furbi}, their integral coincide.
Inequality \eqref{assu} contradicts \eqref{linprob} and proves $u^2_{\hat{m}_1}\geq p$ a.e. in $N\cup  Z$.\\
\emph{Step 3.} Actually,  $u^2_{\hat{m}_1}\equiv p$ in $N\cup  Z$. Indeed,
suppose the open set $C=\{x\in\Omega:u_{\hat{m}_1}>\sqrt{p} \}$ is not empty.  Clearly,
$C\subseteq N\cup Z$. Then, $u_{\hat{m}_1}$ satisfies the boundary value problem
\begin{equation*}
\begin{cases}-\Delta u_{\hat{m}_1} =\lambda_1(\hat{m}_1) \hat{m}_1 u_{\hat{m}_1}
\quad &\text{in } C\\
 u_{\hat{m}_1}=\sqrt{p}  &\text{on } \partial C,
\end{cases}  
\end{equation*}  
where $\lambda_1(\hat{m}_1)=1/\tilde\mu_1(\hat{m}_1)$.
Being $\lambda_1(\hat{m}_1) \hat{m}_1 u_{\hat{m}_1}\leq 0$ in $C$, by the maximum 
principle we obtain the contradiction $u_{\hat{m}_1}\leq \sqrt{p}$ in $C$. Then
$C=\emptyset$ and $u^2_{\hat{m}_1}\equiv p$ in $N\cup  Z$.  Assume now $|N|>0$. This hypothesis, by using the equation $-\Delta u_{\hat{m}_1} =\lambda_1(\hat{m}_1) \hat{m}_1 u_{\hat{m}_1}$ a.e. in $N$, leads to
the contradiction  $0=\lambda_1(\hat{m}_1) \hat{m}_1 u_{\hat{m}_1}<0$. Hence 
$|N|=0$ and, consequently, $\hat{m}
_1\geq 0$  a.e. in $\Omega$.\\    
 Next, to complete the proof of iii) b), we distinguish the two cases $m_0$ nonnegative and $m_0$ 
which changes sign.\\   
\emph{Step 4.} When $m_0$ is nonnegative, by i) of Proposition \ref{burtgen} we have 
$|\{x\in\Omega: m_0(x)>0\}|\leq|P|$.  Now, note that the level sets of $u_{\hat{m}_1}$ restricted to 
$P$ have zero measure, otherwise one would obtain a contradiction with the equation  $ -\Delta 
u_{\hat{m}_1} =\lambda_1(\hat{m}_1) \hat{m}_1 u_{\hat{m}_1}$ (which holds a.e. in $\Omega$). 
In particular, $P=\{x\in\Omega: u_{\hat{m}_1}(x)<\sqrt{p}\}$.
Let us choose a  subset $F$ of $P$ such that $|F|=|\{x\in\Omega: m_0(x)>0\}|$ 
and introduce a measure preserving bijection $
\theta:F\to\{x\in\Omega: m_0(x)>0\}$. Then,  we define the weight   
\begin{equation*}
  \overline{\overline m}_0(x)=
  \begin{cases}
    m_0(\theta(x))\quad &\text{if } x\in F\\
     0                                        &\text{if } x\in \Omega\smallsetminus F.
 \end{cases}            
\end{equation*}      
We have $ \overline{\overline m}_0\sim m_0$, i.e. 
$\overline{\overline{m}}_0\in\mathcal{G}(m_0)$.
Therefore, by Proposition \ref{Teobart89} (see also Remark \ref{ossburt}) there 
exists a decreasing function $\psi:(0,\sqrt{p})\to\mathbb{R}$ such that $\psi\circ u_{\hat{m}_1}
\sim \overline{\overline m}_0$ in $P$. Defining $\psi$ equal to zero in $[\sqrt{p}, +\infty)$ we obtain $
\psi\circ u_{\hat{m}_1}\sim\overline{\overline m}_0\sim m_0$ in the whole $\Omega$. 
By Proposition \ref{Teobart87bis}, $\psi\circ u_{\hat{m}_1}$ is the unique minimizer of problem 
\eqref{linprob}, therefore we conclude that $\hat{m}_1=\psi\circ u_{\hat{m}_1}
\sim m_0$. This proves the claim and d) when $m_0$ is 
nonnegative.\\
 Now, suppose $m_0$ changes sign in $\Omega$. \\
\emph{Step 5.} By using ii) of Proposition \ref{burtgen}  we find a subset $E$ of $\Omega$ of measure
$\gamma$ such that the weight $\overline{m}_0$ defined as in \eqref{barm0} has the properties \eqref{mostar} and \eqref{due}.        
In particular, \eqref{mostar} and i) of Proposition \ref{convexity} give  $\overline{m}_0\in\overline{\mathcal{G}
(m_0)}$, i.e. $\overline{m}_0\prec m_0$, which implies $\overline{\mathcal{G}(\overline{m}_0)}
\subseteq\overline{\mathcal{G}(m_0)}$.  We show that $\hat{m}_1\in\overline{\mathcal{G}(\overline{m}_0)}$, i.e. $\hat{m}_1\prec \overline{m}_
0$.
If $t<\gamma$, then by \eqref{mostar}, we have            
$$\int_0^t \hat{m}_1^*\,ds\leq\int_0^t m^* _0\,ds=
\int_0^t\overline{m}^* _0\,ds,$$
while, when $t\geq\gamma$, being $\hat{m}_1^*$ nonnegative and $\hat{m}_1, 
\overline{m}_0\prec m_0$, we obtain 
$$\int_0^t \hat{m}_1^*\,ds\leq\int_0^{|\Omega|} \hat{m}_1^*\,ds=
\int_0^{|\Omega|}m^* _0\,ds=\int_0^{|\Omega|}\overline{m}^*
_0\,ds=\int_0^t\overline{m}^*_0\,ds,$$
with the equality sign when $t=|\Omega|$.
Then $\hat{m}_1\in\overline{\mathcal{G}(\overline{m}_0)}$.\\
The rest of the proof proceeds almost identically to the case $m_0$ nonnegative.
Nevertheless, for the convenience of the reader we repeat the argument.\\
By ii) of Proposition \ref{burtgen} and $\hat{m}_1 (x)\geq 0$ a.e. in $\Omega$, 
we have $|\{x\in\Omega: \overline{m}_0(x)>0\}|\leq|P|$.          
As before, the level sets of $u_{\hat{m}_1}$ restricted to 
$P$ have zero measure and $P=\{x\in\Omega: u_{\hat{m}_1}(x)<\sqrt{p}\}$.
Let us choose a subset $F$ of $P$ such that
$|F|=|\{x\in\Omega: \overline{m}_0(x)>0\}|$,  introduce  a  measure preserving bijection $
\theta:F\to
\{x\in\Omega: \overline{m}_0(x)>0\}$  and define   
\begin{equation*}
  \overline{\overline m}_0(x)=
  \begin{cases}
    \overline m_0(\theta(x))\quad &\text{if } x\in F\\
     0                                        &\text{if } x\in \Omega\smallsetminus F.
 \end{cases}            
\end{equation*}      
Note that $ \overline{\overline m}_0\sim\overline m_0$, thus $\overline{\mathcal{G}
(\overline{\overline{m}}_0)}=\overline{\mathcal{G}(\overline{m}
_0)}\subseteq\overline{\mathcal{G}(m_0)}$ and $\hat{m}_1\in\overline{\mathcal{G}
(\overline{\overline{m}}_0)}$. By 
Proposition \ref{Teobart89}, there 
exists a decreasing function $\psi:(0,\sqrt{p})\to\mathbb{R}$ such that $\psi\circ u_{\hat{m}_1}\sim 
\overline{\overline m}_0$ in $P$. Putting $\psi$ equal to zero in $[\sqrt{p},+\infty)$ we
obtain $\psi\circ u_{\hat{m}_1}\sim\overline{\overline m}_0\sim \overline{m}_0$ in $\Omega$. By 
Proposition \ref{Teobart87bis},
$\psi\circ u_{\hat{m}_1}$ is the unique minimizer of problem \eqref{linprob} restricted to 
$\overline{\mathcal{G}(\overline{\overline{m}}_0)}$. Since $\hat{m}_1\in\overline{\mathcal{G}
(\overline{\overline{m}}_0)}$, we conclude that $\hat{m}_1=\psi\circ u_{\hat{m}_1}
\sim \overline{m}_0$. This proves the claim and d) when 
$m_0$ changes sign.\\
iii) c) By Proposition \ref{prec}, the assumption $m_0\geq 0$ a.e. in $\Omega$ guarantees
$\hat{m}_1\geq 0$ a.e. in $\Omega$. Then, recalling the notation 
$P=\{x\in\Omega:\hat m_1(x)>0 \}$, the claim follows by using \emph{Step 1}, \emph{Step 2} and \emph{Step 4}.
This also proves d) in the case of Robin boundary conditions.\\
The proof is completed.
\end{proof}  

We are now able to prove Theorem \ref{exist} and Theorem \ref{existmax}.
\begin{proof}[Proof of Theorem \ref{exist} and \ref{existmax}]         
Being $|\{m_0>0\}|>0$, by \eqref{muk} we have  
$$\lambda_1(m)= \cfrac{1}{\mu_1(m)}\, =\cfrac{1}{\widetilde{\mu}_1(m)}\, $$
for all $m\in\mathcal{G}(m_0)$. Therefore, Theorem \ref{exist} and Theorem \ref{existmax} follow from Theorem 
\ref{lemmafond} and Theorem \ref{lemmafondmax} respectively.\\
\end{proof}    

\noindent \textbf{Acknowledgments}.
The authors are partially supported by the research project {\em Analysis of PDEs in connection 
with real phenomena}, CUP F73C22001130007, funded by \href{https://
www.fondazionedisardegna.it/}{Fondazione di Sardegna}, annuity 2021.   
The authors are members of GNAMPA (Gruppo Nazionale per l'Analisi Matematica, la 
Probabilit\`a e le loro Applicazioni) of INdAM (Istituto Nazionale di Alta Matematica
``Francesco Severi").

\end{document}